\def\ds{\displaystyle}
\def\eps{{\varepsilon}}
\def\N{\mathbb{N}}
\def\R{\mathbb{R}}
\def\HH{\mathcal{H}}
\newcommand{\be}{\begin{equation}}
\newcommand{\ee}{\end{equation}}
\newcommand{\de}{\partial}
\newcommand{\dist}{{\rm {dist}}}
\theoremstyle{plain}
\newtheorem{theo}{Theorem}
\numberwithin{equation}{section}
\theoremstyle{plain}
\newtheorem{teo}{Theorem}[section]
\newtheorem{lemma}[teo]{Lemma}
\newtheorem{cor}[teo]{Corollary}
\newtheorem{prop}[teo]{Proposition}
\newtheorem{oss}[teo]{Remark}
\title[On the logarithmic epiperimetric inequality]{On the logarithmic epiperimetric inequality for the obstacle problem}
\author[L.~Spolaor]{Luca Spolaor}
\address{\textit{L.~Spolaor:} Department of Mathematics, University of California, San Diego, La Jolla, CA, 92093
}
\email{lspolaor@ucsd.edu}
\author[B.~Velichkov]{Bozhidar Velichkov}
\address{\textit{B.~Velichkov:}
	Dipartimento di Matematica,
	Universit\`a di Pisa\\
	Largo B. Pontecorvo 5,
	56127 Pisa}
\email{bozhidar.velichkov@unipi.it}
\begin{document}


\begin{abstract}
We give three different proofs of the log-epiperimetric inequality at singular points for the obstacle problem. In the first, direct proof, we write the competitor explicitly; the second proof is also constructive, but this time the competitor is given through the solution of an evolution problem on the sphere. We compare the competitors obtained in the different proofs and their relation to other similar results that appeared recently. Finally, in the appendix, we give a general theorem, which can be applied also in other contexts and in which the construction of the competitor is reduced to finding a flow satisfying two differential inequalities. 
%
\end{abstract}

\maketitle
\begin{center}
	\it To Sandro Salsa on the occasion of his 70th birthday\rm 	
\end{center}




\section{Introduction}	
%


For any $u\in H^1(B_1)$, we define the functionals (\emph{Weiss' boundary adjusted energies})
\begin{equation*}
W_0(u):=\int_{B_1}|\nabla u|^2\,dx-2\int_{\partial B_1}u^2\,d\HH^{d-1}\qquad\text{and}\qquad W(u):=W_0(u)+\int_{B_1}u\,dx.
\end{equation*}
We denote by $\mathcal S$ the following class of $2$-homogeneous polynomials :
\begin{equation}
\label{e:critical_points}
\begin{array}{ll}
\ds\mathcal S:=\Big\{Q_A \colon \R^d \to \R &:\, Q_A(x) = x \cdot Ax,\text{ where $A=(a_{ij})_{ij}$  is a symmetric}\\
&\qquad\qquad \ds\text{ non-negative matrix such that}\quad {\rm{tr}} A =\sum_{i=1}^d a_{ii}= \frac14\ \Big\}\,.
\end{array} 
\end{equation}
Notice that the functional $W$ is constant on $\mathcal S$. We will use the notation 
\begin{equation}\label{e:critical_points_energy}
W(\mathcal S):=W(Q_A)\quad \mbox{for any}\quad Q_A\in \mathcal S.
\end{equation}
This paper is dedicated to the so-called {\it logarithmic epiperimetric inequality}, which was first introduced in \cite{cospve1}, for the functional $W$ and the set $\mathcal S$, and already found several applications to different variational free boundary problems (see for instance \cite{cospve2}, \cite{shi}, \cite{esv1}, \cite{esv2}). 
\begin{theo}[Log-epiperimetric inequality for $W$]\label{t:epi:sing}
There are dimensional constants $\delta>0$, $\eps>0$ and $\gamma\in[0,1)$ such that the following claim holds. For every non-negative function $c\in H^1(\partial B_1)$, with $2$-homogeneous extension $z$ on $B_1$, satisfying   
	$$\text{\rm dist}_{2}\left(c,\mathcal S\right)\le \delta\qquad\text{and}\qquad  W(z)-W(\mathcal S)\le 1,$$
{\flushright there is a non-negative function $h\in H^1(B_1)$ with $h=c$ on $\partial B_1$ satisfying the inequality }
\begin{equation}\label{e:sing:epi}
W(h)-W(\mathcal S) \le \big(W(z)-W(\mathcal S)\big)\Big(1-\eps\big|W(z)-W(\mathcal S)\big|^{\gamma} \Big).
\end{equation}
\end{theo}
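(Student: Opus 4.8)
The plan is to follow what should be the \emph{direct} proof: write the competitor $h$ down explicitly as an almost‑homogeneous extension of $c$, and then estimate $W(h)$ by expanding in spherical harmonics.

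\emph{Reduction and spectral picture.} Put $\Lambda:=W(z)-W(\mathcal S)$. If $\Lambda\le 0$ one may simply take $h:=z$, since then $\Lambda\le\Lambda(1-\eps\Lambda^{\gamma})$; so assume $0<\Lambda\le 1$. Let $Q_{A^*}\in\mathcal S$ realize $\dist_2(c,\mathcal S)$; since $\mathcal S$ is closed and convex, $Q_{A^*}|_{\partial B_1}$ is the $L^2(\partial B_1)$–metric projection of $c$, so $\psi:=c-Q_{A^*}|_{\partial B_1}$ satisfies the obtuse‑angle inequality $\langle\psi,\,Q_B|_{\partial B_1}-Q_{A^*}|_{\partial B_1}\rangle\le 0$ for all $Q_B\in\mathcal S$. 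Expanding $c=\sum_{k\ge0}c_k$ in spherical harmonics ($c_k$ in the eigenspace of $-\Delta_{\partial B_1}$ with eigenvalue $\lambda_k=k(k+d-2)$), and using that the restriction to $\partial B_1$ of any element of $\mathcal S$ consists only of a degree‑$0$ and a degree‑$2$ harmonic, a direct computation of $W$ on homogeneous functions gives the exact identity
\begin{equation*}
\Lambda=\sum_{k\ge 0}\frac{\lambda_k-2d}{d+2}\,\|\psi_k\|_{L^2(\partial B_1)}^{2},
\end{equation*}
in which the degree‑$2$ term vanishes (that eigenspace is the tangent space to $\mathcal S$), the degrees $0$ and $1$ enter with negative weight, and every degree $k\ge3$ enters with weight $\ge\frac{d+3}{d+2}>1$. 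In particular $\Lambda>0$ forces the higher‑frequency $L^2$‑mass $\sum_{k\ge3}\|c_k\|^2$ to dominate the nonpositive low‑frequency contributions, and $\dist_2(c,\mathcal S)^2=\|\psi\|^2$ is comparable to $\Lambda$ plus the mass of the degree‑$0,1$ parts.

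\emph{The competitor.} I would extend $Q_{A^*}$ as the genuine $2$‑homogeneous polynomial, extend the high‑frequency part $\psi^{\sharp}:=\sum_{k\ge3}c_k$ with homogeneity $2+\sigma$ for a small parameter $\sigma\in(0,1)$ to be optimized, and extend the low‑frequency remainder (the constant $\psi_0$, the degree‑$1$ part $\psi_1$, and the degree‑$2$ excess $\psi_2$) with homogeneities individually tuned to minimize their energies (homogeneity $1$ for $\psi_1$, homogeneity $2\mp\beta$ for $\psi_0$ according to the sign of $\psi_0$, homogeneity $2$ for $\psi_2$). Such an extension need not be nonnegative near $K:=\ker A^*\cap\partial B_1$, where $Q_{A^*}$ vanishes quadratically; on a spherical $\rho$‑neighbourhood $U_\rho$ of $K$ I would therefore reglue it to the $2$‑homogeneous extension $z=r^2c$ (which is automatically $\ge0$) by a radial cut‑off/convex combination, so that the resulting $h$ is nonnegative on all of $B_1$, equals $c$ on $\partial B_1$, and equals $z$ near $K$. (The same competitor can be produced by a gradient flow on the sphere, which is the route of the second proof; for the direct proof the explicit profiles above suffice.)

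\emph{The energy estimate.} Since $\Delta Q_{A^*}\equiv\frac12$ and the degree‑$0,2$ content of $Q_{A^*}$ is orthogonal to $\psi_1$ and $\psi^{\sharp}$, all cross terms between $Q_{A^*}$ and the higher harmonics cancel after integration by parts, and the linear terms $\int_{B_1}h$ conveniently cancel the boundary contributions of the gradient cross terms. Expanding $W(h)-W(\mathcal S)-\Lambda$ mode by mode I expect an inequality of the form
\begin{equation*}
W(h)-W(\mathcal S)-\Lambda\ \le\ -\,c_d\,\sigma\,\Lambda\;-\;c_d'\,\|c_1\|^{2}\;-\;c_d''\,|\beta|\,|\psi_0|\;+\;\mathrm{Err}(\rho),
\end{equation*}
where the first term comes from the degree‑$\ge3$ modes via the $2+\sigma$ homogeneity (using $\lambda_k-2d\ge d+3$ and the Step‑1 identity, these modes already release a definite multiple of $\sigma\Lambda$, and of $\Lambda$ itself once $\sigma$ is not infinitesimal), while $\mathrm{Err}(\rho)\ge0$ is the energy paid for regluing to $z$ on $U_\rho$ and is controlled by the amount of $W$‑energy — equivalently, of high‑frequency $L^2$‑mass of $c$ — carried by $U_\rho$. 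Optimizing $\sigma$, and in the degenerate case also $\rho$, then produces \eqref{e:sing:epi}.

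\emph{Main obstacle.} The one genuinely delicate point is $\mathrm{Err}(\rho)$: reconciling the gain‑producing extension with $h\ge0$ when $A^*$ is (nearly) degenerate. Here one must use the hypothesis $c\ge0$ quantitatively. Because $Q_{A^*}$ vanishes quadratically on $K$ and $c\ge0$ there, the degree‑$2$ excess $\psi_2$ is essentially the normal component of $c_2$ to the cone of admissible matrices, hence is represented by a matrix that is negative semidefinite on $\ker A^*$ up to a small trace correction; together with $c\ge0$ on $K$, this forces the ``bad'' quantities — the inner product $\langle Q_{A^*}|_{\partial B_1},\psi\rangle$ appearing in the cross terms, and the high‑frequency mass concentrated near $K$ — to be controlled by a power of $\dist_2(c,\mathcal S)$, hence of $\Lambda$, whose exponent degrades as the smallest eigenvalue of $A^*$ tends to $0$. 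Balancing this \L{}ojasiewicz‑type loss against the gain $\sigma\Lambda$, and verifying nonnegativity of the glued competitor, is exactly where the exponent $\gamma\in(0,1)$ arises (with $\gamma=0$ when $A^*$ is nondegenerate, in which case no regluing is needed and the classical epiperimetric inequality holds). Everything else is the routine bookkeeping of energies of homogeneous functions.
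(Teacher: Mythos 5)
Your outline is pointed in the right direction: decompose $c-Q$ in spherical harmonics, identify the strictly positive contribution of the modes of degree $\ge 3$ to $W(z)-W(\mathcal S)$, extend those modes with homogeneity $2+\sigma$ to release a definite fraction of the energy, and use $c\ge 0$ quantitatively to repair nonnegativity. The spectral bookkeeping (the identity for $\Lambda$, the signs of the weights, the threshold $\frac{d+3}{d+2}$) is all consistent with what the paper does. But at the one point you yourself flag as ``the one genuinely delicate point'' there is a genuine gap, and the device you propose is not the one that makes the estimate close.

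You propose to repair nonnegativity by gluing back to the $2$-homogeneous extension $z=r^2c$ on a spherical neighbourhood $U_\rho$ of $K=\ker A^*\cap\partial B_1$, and you leave $\mathrm{Err}(\rho)$ unquantified. That gluing is hard to control precisely because it destroys the mode-by-mode orthogonality that your energy expansion relies on: a radial cut-off localized near $K$ is not a low-frequency function, so the glued competitor no longer decomposes as a sum of pieces each supported in a single eigenspace, and the cross terms you claim ``conveniently cancel'' no longer do. The paper avoids this entirely. Its repair is \emph{global on the sphere}: with $M:=\max_{\partial B_1}\{-(\eta_-+\eta_0+Q)\}$, it sets
\[
h_2:=Q+\eta_-+\eta_0+8dM\Big(\tfrac1{4d}-Q\Big),\qquad h_\alpha:=\eta_+-8dM\Big(\tfrac1{4d}-Q\Big),
\]
so that $c=h_2+h_\alpha$ and $h_2\ge 0$ everywhere on $\partial B_1$, and then takes $h=r^2h_2+r^{2+\eps}h_\alpha=(r^2-r^{2+\eps})h_2+r^{2+\eps}c\ge 0$. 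The crucial fact is that $\frac1{4d}-Q$ is an exact $2d$-eigenfunction, so the correction lives in the eigenspace that contributes \emph{zero} energy to the $2$-homogeneous extension; only the radial mismatch $(r^2-r^{2+\eps})$ costs anything, and that cost is $O(M^2\eps^2)$ with explicit constants. Your ``individually tuned'' homogeneities ($1$ for the linear part, $2\mp\beta$ for the constant part) are not needed and would only complicate the positivity argument.

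The second missing ingredient is the quantitative \L{}ojasiewicz-type estimate that actually produces $\gamma$. The paper proves (Lemma 3.1(ii)) that
\[
M^{d+1}\le C_d\,\|\eta_+\|_{L^2(\partial B_1)}^2,
\]
by observing that $P:=Q+\eta_-+\eta_0$ is a polynomial with dimensionally bounded Lipschitz norm, that $c\ge 0$ forces $-\min\{P,0\}\le|\eta_+|$, and that a nonnegative $L$-Lipschitz function attaining a value $M$ has $L^2$ mass at least $cM^{d+1}/L^{d-1}$ on a ball of radius $M/L$ (Lemma 3.2). You allude to a ``\L{}ojasiewicz-type loss... whose exponent degrades as the smallest eigenvalue of $A^*$ tends to $0$'', but do not produce this inequality, and in fact the paper's estimate is \emph{uniform} in $A^*$: the exponent $\gamma=\frac{d-1}{d+1}$ does not depend on the degeneracy of $A^*$, precisely because the bound on $M$ is through the Lipschitz geometry on all of $\partial B_1$, not through eigenvalues of $A^*$. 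Once you have $M^{d+1}\lesssim\|\eta_+\|^2$ and the clean $O(M^2\eps^2)$ cost of the correction, the final step is a simple optimization in $\eps\sim W_0(z_+)^{(d-1)/(d+1)}$, yielding \eqref{e:sing:epi}. So the broad plan is right, but the two load-bearing ideas of the direct proof — the global $2d$-eigenfunction correction, and the $M^{d+1}\lesssim\|\eta_+\|^2$ estimate — are not present in your proposal, and the substitute (local gluing near $\ker A^*$ plus an unestimated $\mathrm{Err}(\rho)$) does not close.
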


\subsection{Epiperimetric and log-epiperimetric inequalities - overview of the known results} The epiperimetric inequalities are powerful tools in the regularity theory of free boundary problems and minimal surfaces. The concept of an epiperimetric inequality (which is \eqref{e:sing:epi} with $\gamma=0$) was first introduced by Reifenberg in \cite{reifenberg} in the context of minimal surfaces in the 60s. In the late 90s, Weiss \cite{weiss} used an epiperimetric inequality approach to study the free boundaries of the obstacle problem. 

The Weiss' epiperimetric inequality was still of the form \eqref{e:sing:epi}, with $\gamma=0$ and for the same energy $W$, but the set $\mathcal S$ defined in \eqref{e:critical_points} was replaced by the set of the 'flat' blow-up limits 
$$\mathcal R=\Big\{Q(x)=\frac14(x\cdot\nu)_+^2\quad\text{where}\quad \nu\in\partial B_1\Big\}.$$
In \cite{weiss} Weiss used the epiperimetric inequality to prove the $C^{1,\alpha}$ regularity of the 'flat' free boundaries. 
It is now known that the epiperimetric inequality (\eqref{e:sing:epi} with $\gamma=0$) cannot hold in a neighborhood of the set $\mathcal S$; this follows from the counterexample to the $C^{1, \alpha}$ regularity of the singular free boundaries given in \cite{FiSe}.

There are several other epiperimetric inequalities for free boundary problems in the literature. By using the technique of Weiss, Focardi-Spadaro \cite{fosp} and Garofalo-Petrosyan-Smit\, Vega\,Garcia \cite{gapeve} proved an epiperimetric inequality at the regular points of the thin-obstacle free boundaries. Then, in \cite{spve}, by using a different (direct) approach, we proved an epiperimetric inequality for the Bernoulli free boundary problems in dimension two. 
\medskip

The log-epiperimetric inequality (\eqref{e:sing:epi} with $\gamma\in[0,1)$) was introduced
 in our work \cite{cospve1}, in collaboration with M.\,Colombo, where we first proved Theorem \ref{t:epi:sing}. The initial idea in \cite{cospve1} was to attack the epiperimetric inequality for $\mathcal S$ (left open in the work of Weiss \cite{weiss}) by the direct approach from \cite{spve}. As we already mentioned above, this cannot be actually done, but it led to the formulation of the log-epiperimetric inequality \eqref{e:sing:epi}, from which we obtained the $C^{1,\log}$ regularity of the singular part of the free boundary (from where the name {\it logarithmic}).

 Following the original spirit of \cite{spve}, our approach in \cite{cospve1} was still {\it direct} in the sense that we built the competitor explicitly. We later used this idea to prove a log-epiperimetric inequality at the singular points of the thin-obstacle free boundaries \cite{cospve2}. Constructing explicit competitors has the advantage to provide proofs that use only elementary tools and essentially boils down to constructing sub and supersolutions starting from a trace, which is close to the set $\mathcal S$. This requires the set $\mathcal S$ to be known explicitly. On the other hand, one is mainly interested in the case when $\mathcal S$ is a set of global homogeneous solutions to some free boundary variational problem and the classifications of these solutions is known only for some specific problems.
 
 In \cite{esv1} and \cite{esv2}, in collaboration with Max Engelstein, we elaborated a different approach to the log-epiperimetric inequality and we constructed the competitor by reparametrizing the solution of an evolution problem on the sphere.  Finally, in \cite{cospve3}, we exploited some of the ideas from \cite{esv1} and \cite{esv2} to give new proofs of the log-epiperimetric inequalities (for the obstacle and the thin-obstacle problems) from \cite{cospve1} and \cite{cospve2}, and we showed its relation to a class of parabolic variational inequalities.

\subsection{Aim and organization of the paper} This paper has several objectives. Our first aim is to provide a different (and hopefully easier) direct proof of Theorem \ref{t:epi:sing}. Our new proof (see Section \ref{s:direct}) is based on a specific decomposition of the trace inspired by the competitor that we used to prove the constrained \L ojasiewicz inequality in \cite{cospve3}. Then, we notice that the competitor built this way in fact simulates the behavior the gradient flow from \cite{cospve3} and so, we give a second proof of Theorem \ref{t:epi:sing} (see Section \ref{s:constructive}) by using a construction in the spirit of \cite{cospve3}. 

We explain the ideas behind the two different constructions in Section \ref{s:survey}, which can be read independently. Moreover, in order to give our second proof of Theorem \ref{t:epi:sing}, we give a new general result in which the construction of the competitor is reduced to finding a flow on the sphere that satisfies two specific inequalities (see Theorem \ref{p:epik}). This general result applies both to the gradient flow from \cite{cospve3} and to the flow from Section \ref{s:constructive}. It is also intended to facilitate the construction of competitors for other variational problems, for instance, non-local obstacle problems.

\subsubsection{A direct proof of Theorem \ref{t:epi:sing}} In the first three sections we give a direct proof of Theorem \ref{t:epi:sing} in the spirit of \cite{cospve1}, but using a different competitor. Section \ref{s:prelimaries} contains the notations and some basic facts about the functional $W$. Section \ref{s:best} is dedicated to the main estimate that we need in the proof of Theorem \ref{t:epi:sing}. In Section \ref{s:direct} we give the proof of the theorem by putting together the estimates from Section \ref{s:prelimaries} and Section \ref{s:best}. 

\subsubsection{The constructive approach to Theorem \ref{t:epi:sing}} In Section \ref{s:constructive} and Section \ref{s:third_proof} we give a proof of Theorem \ref{t:epi:sing} based on the definition of a flow on the sphere. Then, we use the estimates from Section \ref{s:prelimaries} and Section \ref{s:best} to prove that these flows satisfy the condition of Theorem \ref{p:epik}. 

\subsubsection{Comparison of the direct and the constructive approach} In Section \ref{s:survey}, we explain the main ideas behind the direct proofs from Section \ref{s:direct} and \cite{cospve1}, and the constructive, gradient flow approach from Sections \ref{s:constructive} and \ref{s:third_proof}. This section can be read independently; we only use some of the notations and preliminary results from Section \ref{s:prelimaries}. 

\subsubsection{The general Theorem \ref{p:epik}} In Theorem \ref{p:epik} we show how to construct a competitor out of a flow defined in $H^1(\partial B_1)$; we prove that competitor satisfies a log-epiperimetric inequality provided that the flow satisfies two main conditions: 
\begin{center}
\it an energy dissipation inequality \eqref{e:energy_dissipation_condition}, \rm \quad and \quad \it \L ojasiewicz inequality \eqref{e:lojaK}.
\end{center}

\noindent This result applies to both the flow from Section \ref{s:constructive} and the gradient flow from Section \ref{s:third_proof}; in the first case, the exponent $\gamma$ in the log-epiperimetric inequality \eqref{e:sing:epi} appears as a consequence of the energy dissipation inequality \eqref{e:energy_dissipation_condition}, while in the case of the gradient flow (Section \ref{s:third_proof}), the exponent is due to the \L ojasiewicz inequality  \eqref{e:lojaK}. In both cases, Theorem \ref{p:epik} allows to reduce the proof of the log-epiperimetric inequality to verifying that \eqref{e:energy_dissipation_condition} and \eqref{e:lojaK} hold along the flow.

\subsection{Log-epiperimetric inequality and structure of the singular free boundaries}$ $\\
 A consequence of Theorem \ref{t:epi:sing} is the following result on the structure of the singular free boundaries of solutions to the obstacle problem, which we give here for the sake of completeness. We recall that $u:B_1\to\R$ is a solution to the obstacle problem (in the unit ball $B_1\subset\R^d$) if $u\ge 0$ and 
\begin{align}
\int_{B_1}\big(|\nabla u|^2+u\big)\,dx\le \int_{B_1}\big(|\nabla v|^2&+v\big)\,dx\quad\text{for every}\quad v\in H^1(B_1)\quad\notag \\
&\text{such that}\quad v\ge 0\quad\text{in}\quad B_1\quad\text{and}\quad u-v\in H^1_0(B_1).\label{e:opb}
\end{align}
We define $\Omega_u:=\{u>0\}$ and the set of singular points
$$\text{\rm Sing}(\partial\Omega_u):=\Big\{x_0\in\partial\Omega_u\ :\ \lim_{r\to0}\frac{\big|B_r(x_0)\cap\Omega_u\big|}{|B_r|}=1\Big\}.$$
Let $u$ be a solution to the obstacle problem. We say that $Q:B_1\to\R$ is a blow-up limit of $u$ at $x_0\in\partial\Omega_u\cap B_1$ (and we write $Q\in BU(u,x_0)$), if there is a sequence $r_n\to 0$ such that 
$$\lim_{n\to\infty}\|u_{r_n,x_0}-Q\|_{L^2(\partial B_1)}\qquad\text{where}\qquad u_{r_n,x_0}(x)=\frac1{r_n^2}u(x_0+r_nx)\,.$$
It is well-known that $x_0\in \text{\rm Sing}(\partial\Omega_u)$ if and only if $BU(u,x_0)\subset \mathcal S$ (see, for instance, \cite{caffarelli_revisited} and \cite{figalli}). 

\noindent Finally, we define the strata $\text{\rm Sing}_k(\partial\Omega_u)$, for every $k\in\{0,1,\dots,d-1\}$, as 
$$\text{\rm Sing}_k(\partial\Omega_u):=\Big\{x_0\in \text{\rm Sing}(\partial\Omega_u)\ :\ \text{there is }\ Q_A\in BU(u,x_0)\ \text{ such that }\ \text{dim}\big(\text{Ker}\,A\big)=k\Big\}.$$
As an immediate consequence of Theorem \ref{t:epi:sing}, we obtain Theorem \ref{t:reg}, proved in \cite{cospve1}. A finer result on the structure of the singular set (in any dimension) was obtained by Figalli and Serra \cite{FiSe}, and more recently, by Figalli, Serra and Ros-Oton \cite{fisero}.
\begin{theo}[Structure of the singular free boundaries]\label{t:reg}
Let $u:B_1\to\R$ be a solution to the obstacle problem \eqref{e:opb}. 
Then, the following holds. 
\begin{enumerate}[(i)]
\item {\bf Uniqueness of the blow-up limit.} At every $x_0\in \text{\rm Sing}(\partial\Omega_u)$, the blow-up limit is unique, that is, there is $Q_{A_{x_0}}\in\mathcal S$ such that 
$$\lim_{r\to0}\|u_{r,x_0}-Q_{A_{x_0}}\|_{L^2(\partial B_1)}=0.$$
\item {\bf Rate of convergence of the blow-up sequence.} For every $x\in B_1$, there is a ball $B=B_\rho(x)$ and $R>0$ such that 
$$\|u_{r,x_0}-Q_{A_{x_0}}\|_{L^2(\partial B_1)}\le C(-\ln r)^{-\delta}\quad\text{for every}\quad x_0\in  \text{\rm Sing}(\partial\Omega_u)\cap B\quad\text{and every}\quad r\le R.$$
\item {\bf Distance between the blow-up limits at different points.} For every $x\in B_1$, there is a ball $B_\rho(x)$ such that 
$$\|Q_{A_{y_0}}-Q_{A_{x_0}}\|_{L^2(\partial B_1)}\le C\big(-\ln |x_0-y_0|\big)^{-\delta}\quad\text{for every}\quad x_0,y_0\in  \text{\rm Sing}(\partial\Omega_u)\cap B_\rho(x).$$
\item {\bf Structure of the strata.} The set $\text{Sing}_{\,0}(\partial\Omega_u)$ is discrete. For every  $1\le k\le d-1$, there is $\delta$, depending on $d$ and eventually on $k$ such that the following holds. 

For every $x\in \text{Sing}_{\,k}(\partial\Omega_u)$, there is a ball $B_\rho(x)\subset B_1$ such that $\text{Sing}_{\,k}(\partial\Omega_u)\cap B_\rho(x)$ is contained in a $C^{1,\log}$-regular $k$-dimensional manifold.  Precisely, up to a rotation of the coordinate system, there is a $C^1$ regular function $\varphi:\R^k\to\R^{d-k}$ such that 
$$\text{Sing}_{\,k}(\partial\Omega_u)\cap B_\rho(x)\subset \text{\rm Graph}(\varphi)\cap B_\rho(x),$$
and $\nabla\varphi(0)=0$ and 
$$|\nabla \varphi(x_0')-\nabla\varphi(y_0')|\le C\big(-\ln |x_0'-y_0'|\big)^{-\delta}\quad\text{for every}\quad x_0',y_0'\in \R^k\cap B_\rho(x).$$
\end{enumerate}
\end{theo}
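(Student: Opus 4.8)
The plan is to derive Theorem~\ref{t:reg} from Theorem~\ref{t:epi:sing} by the classical Weiss' monotonicity-formula scheme, the log-epiperimetric inequality now producing a logarithmic (rather than power) decay of the energy. Fix $x_0\in\text{\rm Sing}(\partial\Omega_u)$ and set $e(r):=W(u_{r,x_0})-W(\mathcal S)$. The preliminary step is to collect three standard facts: (a) since $u$ solves \eqref{e:opb}, each rescaling $u_{r,x_0}$ is a minimizer of $v\mapsto\int_{B_1}\big(|\nabla v|^2+v\big)\,dx$, hence of $W$, among non-negative competitors sharing its boundary datum $c_r:=u_{r,x_0}|_{\partial B_1}$ (under this rescaling the linear term scales exactly like the Dirichlet term, so no lower-order mismatch appears); (b) Weiss' formula $\frac{d}{dr}W(u_{r,x_0})=\frac{c_d}{r}\int_{\partial B_1}\big(\partial_\nu u_{r,x_0}-2u_{r,x_0}\big)^2\,d\HH^{d-1}=:\frac{c_d}{r}\,G(r)\ge0$; and (c) since $BU(u,x_0)\subset\mathcal S$, one has $e(r)\ge0$, $e(0^+)=0$ and $\text{\rm dist}_2(c_r,\mathcal S)\to0$, so Theorem~\ref{t:epi:sing} applies for all $r\le R$, with $R$ uniform as $x_0$ ranges over $\text{\rm Sing}(\partial\Omega_u)\cap B$ for a fixed small ball $B$ (by compactness).

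Next I combine (a)--(c). Let $z_r$ be the $2$-homogeneous extension of $c_r$ and $D(r):=W(z_r)-W(\mathcal S)$. By (a), $e(r)\le D(r)$ and $e(r)\le W(h_r)-W(\mathcal S)$ for the competitor $h_r$ of Theorem~\ref{t:epi:sing}, so \eqref{e:sing:epi} gives $\eps\,e(r)^{1+\gamma}\le D(r)-e(r)=W(z_r)-W(u_{r,x_0})$. The standard Weiss estimate bounds this energy gap by the monotonicity defect, $W(z_r)-W(u_{r,x_0})\le C\,G(r)$ (up to terms of lower order, absorbable using $|\{u_{r,x_0}=0\}|\to0$), whence, by (b), the differential inequality $e'(r)\ge\frac{c_0}{r}\,e(r)^{1+\gamma}$. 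Integrating $-\tfrac1\gamma\tfrac{d}{dr}\big(e(r)^{-\gamma}\big)\ge\frac{c_0}{r}$ yields $e(r)\le C(-\ln r)^{-1/\gamma}$ (when $\gamma=0$ one gets a power rate, a fortiori the logarithmic one). Finally, since $\|\partial_r u_{r,x_0}\|_{L^2(\partial B_1)}^2=G(r)/r^2=\tfrac1{c_d r}e'(r)$, from $\|u_{r,x_0}-u_{s,x_0}\|_{L^2(\partial B_1)}\le\int_s^r\|\partial_t u_{t,x_0}\|_{L^2(\partial B_1)}\,dt$ and the pointwise bound $\sqrt{e'(t)/t}\le C\,e'(t)\,e(t)^{-(1+\gamma)/2}$ (a consequence of the differential inequality) one obtains, letting $s\to0$, the Cauchy estimate $\|u_{r,x_0}-Q_{A_{x_0}}\|_{L^2(\partial B_1)}\le C\,e(r)^{(1-\gamma)/2}\le C(-\ln r)^{-\delta}$ with $\delta=\tfrac{1-\gamma}{2\gamma}>0$. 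This gives the unique blow-up $Q_{A_{x_0}}\in\mathcal S$ of item~(i) and the rate of item~(ii).

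For item~(iii), given $x_0,y_0\in\text{\rm Sing}(\partial\Omega_u)$ with $|x_0-y_0|$ small, pick an intermediate scale $\rho$ with $|x_0-y_0|\ll\rho$ and $-\ln\rho\simeq-\ln|x_0-y_0|$ (e.g.\ $\rho=|x_0-y_0|^{1/2}$) and estimate
\[
\|Q_{A_{x_0}}-Q_{A_{y_0}}\|_{L^2(\partial B_1)}\le\|Q_{A_{x_0}}-u_{\rho,x_0}\|+\|u_{\rho,x_0}-u_{\rho,y_0}\|+\|u_{\rho,y_0}-Q_{A_{y_0}}\|;
\]
the outer terms are $\le C(-\ln\rho)^{-\delta}\le C'(-\ln|x_0-y_0|)^{-\delta}$ by~(ii), while the middle term is $O(|x_0-y_0|/\rho)$ by the $C^{1,1}$ bound on $u$ and its quadratic vanishing at free boundary points, hence negligible. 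For item~(iv): at $x_0\in\text{\rm Sing}_0$ the matrix $A_{x_0}$ is positive definite, so $\{Q_{A_{x_0}}=0\}=\{0\}$, and a blow-up/contradiction argument along a putative sequence of distinct singular points converging to $x_0$ — using continuity of the free boundary under $L^2$-convergence of solutions together with non-degeneracy — shows $\text{\rm Sing}_0(\partial\Omega_u)$ is discrete. For $1\le k\le d-1$ and $x_0\in\text{\rm Sing}_k$, set $V_{x_0}:=\ker A_{x_0}$ ($\dim V_{x_0}=k$); item~(iii) and equivalence of norms on the finite-dimensional space of $2$-homogeneous polynomials give $|A_{x_0}-A_{y_0}|\le C(-\ln|x_0-y_0|)^{-\delta}$, and, since the $d-k$ positive eigenvalues of $A$ stay uniformly bounded away from $0$ near $x_0$, this upgrades to $\text{\rm dist}(V_{x_0},V_{y_0})\le C(-\ln|x_0-y_0|)^{-\delta}$; moreover~(ii) together with non-degeneracy of $Q_{A_{x_0}}$ on $V_{x_0}^\perp$ forces $\text{\rm dist}\big(y_0-x_0,\ x_0+V_{x_0}\big)\le|x_0-y_0|\,\omega(|x_0-y_0|)$ with $\omega(t)=C(-\ln t)^{-\delta}$. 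These are exactly the hypotheses of a Whitney-type extension theorem for $C^1$ maps whose gradient has logarithmic modulus of continuity, and its application produces the $C^1$ function $\varphi$ with $\nabla\varphi(0)=0$ and the stated modulus for $\nabla\varphi$.

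The step I expect to be most delicate is the Weiss estimate $W(z_r)-W(u_{r,x_0})\le C\,G(r)$ — i.e.\ that the energy gap between $u_{r,x_0}$ and its $2$-homogeneous extension is controlled by the boundary non-homogeneity defect, with the lower-order remainders genuinely absorbable — together with extracting from the resulting differential inequality not only the logarithmic decay of $e(r)$ but also the $L^2$ Cauchy estimate with the sharp exponent $\delta=\tfrac{1-\gamma}{2\gamma}$. For item~(iv) the subtle points are the uniform eigenvalue gap that makes $x_0\mapsto V_{x_0}$ log-Hölder continuous and the careful implementation of the Whitney extension with such a weak modulus so that the resulting manifold is truly $C^{1,\log}$.
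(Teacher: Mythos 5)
Your proposal follows essentially the same route as the paper, which derives (i)--(ii) from the log-epiperimetric inequality via the abstract scheme of Proposition~\ref{p:app:B} (Weiss monotonicity, the energy-gap estimate, and \eqref{e:sing:epi} yield the differential inequality $e'(r)\ge \frac{c_0}{r}\,e(r)^{1+\gamma}$ for the scaled Weiss energy, hence logarithmic decay and the $L^2$ Cauchy estimate with $\delta=\tfrac{1-\gamma}{2\gamma}$), then gets (iii) from (ii) by a triangle inequality at an intermediate scale and (iv) from (iii) together with a Whitney extension theorem. The only cosmetic difference is that you obtain the Cauchy estimate from the pointwise bound $\sqrt{e'(t)/t}\le C\,e'(t)\,e(t)^{-(1+\gamma)/2}$, whereas Proposition~\ref{p:app:B} reaches the same exponent by Cauchy--Schwarz over dyadic time intervals.
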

\begin{proof} The claims (i) and (ii) follow by the log-epiperimetric inequality \eqref{e:sing:epi} and a standard general procedure, which we explain in the appendix (Proposition \ref{p:app:B}) for the sake of completeness. The claim (iii) follows directly from (ii), while (iv) is a consequence of (iii) and a Whitney extension theorem (see, for instance, \cite{fef}).
\end{proof}


\subsection{Acknowledgments} 
L.S. has been partially supported by the NSF grant DMS 1810645. B.V. has been supported by the European Research Council (ERC) under the European Union's Horizon 2020 research and innovation programme (grant agreement VAREG, No. 853404).

\section{Preliminaries}\label{s:prelimaries}

In this section we prove some preliminary results about the energy $W$ (Section \ref{sub:weiss}) and we recall some a basic facts about the decomposition in spherical harmonics (Section \ref{sub:fourier}) that we use in the construction of the competitors for the log-epiperimetric inequality.

\subsection{Properties of the Weiss' boundary adjusted energy}\label{sub:weiss}
In the direct proof of Theorem \ref{t:epi:sing} (see Section \ref{s:direct}) we will use only Lemma \ref{l:WW0}, Lemma \ref{l:slicing} and Corollary \ref{cor:orto}. Lemma \ref{l:slicing2} will be used in the second proof, given in Section \ref{s:constructive}.
\begin{lemma}[$W$ and $W_0$]\label{l:WW0}
Let $u\in H^1(B_1)$ and $Q\in \mathcal S$. Then
$$W_0(u-Q)=W(u)-W(Q).$$
\end{lemma}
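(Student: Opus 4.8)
The plan is to expand $W_0(u-Q)$ by bilinearity and then collapse the cross term by exploiting the two structural properties of an element $Q=Q_A\in\mathcal S$: it is a $2$-homogeneous polynomial, so that $\partial_\nu Q = x\cdot\nabla Q = 2Q$ on $\partial B_1$; and, since ${\rm tr}\,A=\tfrac14$, it solves $\Delta Q = 2\,{\rm tr}\,A = \tfrac12$, which is precisely the Euler--Lagrange equation of the obstacle energy. These two identities are the only input needed, and both are elementary.

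Concretely, I would introduce the symmetric bilinear form $B(v,w):=\int_{B_1}\nabla v\cdot\nabla w\,dx - 2\int_{\partial B_1}vw\,d\HH^{d-1}$, for which $W_0(v)=B(v,v)$, and write
\[
W_0(u-Q) = W_0(u) + W_0(Q) - 2B(u,Q).
\]
Then I would compute the cross term $B(u,Q)$ by integrating by parts in the Dirichlet part: using $\Delta Q=\tfrac12$ and $\partial_\nu Q=2Q$,
\[
\int_{B_1}\nabla u\cdot\nabla Q\,dx = -\int_{B_1}u\,\Delta Q\,dx + \int_{\partial B_1}u\,\partial_\nu Q\,d\HH^{d-1} = -\tfrac12\int_{B_1}u\,dx + 2\int_{\partial B_1}uQ\,d\HH^{d-1},
\]
so the boundary contributions in $B(u,Q)$ cancel and $B(u,Q)=-\tfrac12\int_{B_1}u\,dx$, i.e. $-2B(u,Q)=\int_{B_1}u\,dx$. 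Specializing this same computation to $u=Q$ (equivalently, integrating by parts directly in $\int_{B_1}|\nabla Q|^2\,dx$) gives $W_0(Q)=-\tfrac12\int_{B_1}Q\,dx$, hence $W_0(Q)=-W(Q)$ because $W(Q)=W_0(Q)+\int_{B_1}Q\,dx$. Combining the three observations yields
\[
W_0(u-Q) = W_0(u) + \int_{B_1}u\,dx + W_0(Q) = W(u) + W_0(Q) = W(u) - W(Q),
\]
which is the claim.

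There is essentially no serious obstacle here: the only points requiring a word of care are that $u\in H^1(B_1)$ has a well-defined $L^2$ trace on $\partial B_1$ so that all the integrals make sense, and that the integration by parts is justified with $Q$ a polynomial and $u\in H^1$, both of which are standard. The one conceptual remark worth making is that the cross term $-2B(u,Q)$ produces exactly $\int_{B_1}u\,dx$ — precisely the linear term that upgrades $W_0$ to $W$ — and this is a direct reflection of $Q$ being a critical point of the obstacle energy.
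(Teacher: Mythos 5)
Your proof is correct and follows essentially the same route as the paper: expand $W_0(u-Q)$ by bilinearity, integrate the cross term by parts using $\Delta Q=\tfrac12$ and $\partial_\nu Q=2Q$ to produce $\int_{B_1}u\,dx$, and then use $W_0(Q)=-W(Q)$ to conclude. The bilinear-form notation $B(\cdot,\cdot)$ is just a cosmetic repackaging of the same computation the paper carries out inline.
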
	
\begin{proof}
	We compute 
\begin{align}
W_0(u-Q)&=\int_{B_1}|\nabla (u-Q)|^2-2\int_{\partial B_1} (u-Q)^2\,d\HH^{d-1}\notag\\
&=W_0(u)+W_0(Q)-2\left(\int_{B_1}\nabla u\cdot\nabla Q\,dx-2\int_{\partial B_1} uQ\,d\HH^{d-1}\right)\notag\\
&=W_0(u)+W_0(Q)+2\int_{B_1}u\Delta Q\,dx\notag\\
&=W_0(u)+W_0(Q)+\int_{B_1}u\,dx=W(u)-W(Q),\label{e:energy_reduction}
\end{align}
where we integrated by parts and, in the last line, we used the identity 

$$W_0(Q)=\int_{B_1}|\nabla Q|^2\,dx-2\int_{\partial B_1}Q^2\,d\HH^{d-1}=-\int_{B_1}Q\Delta Q\,dx=-\frac12\int_{B_1}Q\,dx,$$
so that
$\ds W(Q)=W_0(Q)+\int_{B_1}Q\,dx=\frac12\int_{B_1}Q\,dx=-W_0(Q)\,.$
\end{proof}

The following simple lemma will be fundamental in both proofs so we collect it here.
\begin{lemma}[Slicing Lemma]\label{l:slicing} Let $u=u(r,\theta)\in H^1(B_1)$. Then, computing the energy $ W_0(r^2u)$ of the function written in polar coordinates as $(r,\theta)\mapsto r^2u(r,\theta)$, we obtain
	\begin{equation}\label{e:slicing1}
	 W_0(r^2u)=\int_0^1 r^{d+1} \int_{\de B_1} \left(|\nabla_\theta u|^2  - 2d\,u^2\right)\,d\theta\,dr+\int_0^1 r^{d+3}\int_{\de B_1} |\de_r u|^2\,d\theta\,.
	\end{equation}
	In particular, if we set 
	\begin{equation}\label{e:mathcalF}
	\mathcal F(\phi):=\int_{\de B_1} \left(|\nabla_\theta \phi|^2  - 2d\,\phi^2+\phi\right)\,d\HH^{d-1}\,,
	\end{equation}
	we have the equality
	\begin{equation}\label{e:slicing2}
	W(r^2u)=\int_0^1 \mathcal F\big(u(r,\cdot)\big) r^{d+1} \,dr+\int_0^1r^{d+3}\int_{\de B_1} |\de_r u|^2\,d\HH^{d-1}\,dr.
	\end{equation}
	Finally,  if $u(r,\theta)=r^{\eps} c(\theta)$, then
	\begin{equation}\label{e:slicing_homo}
	W_0\big(r^{2+\eps} c(\theta)\big)= \frac{1}{d+2\alpha-2}\int_{\partial B_1}\Big(|\nabla_\theta c|^2-2d c^2\Big)d\theta+\frac{(\alpha-2)^2}{d+2\alpha-2}\int_{\partial B_1}c^2\,d\theta,
	\end{equation}
	where $\alpha=2+\eps$ and $\eps\ge 0$.
\end{lemma}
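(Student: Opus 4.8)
The plan is to compute everything by brute force in polar coordinates. Set $f(r,\theta):=r^2u(r,\theta)$, so that $\partial_r f=2ru+r^2\partial_r u$ and $\nabla_\theta f=r^2\nabla_\theta u$. Using the polar decomposition $|\nabla f|^2=|\partial_r f|^2+r^{-2}|\nabla_\theta f|^2$ together with $dx=r^{d-1}\,dr\,d\theta$, I would expand $\int_{B_1}|\nabla f|^2\,dx$ into four pieces: a term $4r^{d+1}u^2$, a cross term $4r^{d+2}u\,\partial_r u=2r^{d+2}\partial_r(u^2)$, a term $r^{d+3}|\partial_r u|^2$, and a term $r^{d+1}|\nabla_\theta u|^2$.

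The only manipulation needed is an integration by parts in $r$ on the cross term: $\int_0^1 2r^{d+2}\partial_r(u^2)\,dr=2\,u^2(1,\theta)-2(d+2)\int_0^1 r^{d+1}u^2\,dr$. The boundary contribution $2\int_{\partial B_1}u^2(1,\theta)\,d\HH^{d-1}$ cancels exactly the term $-2\int_{\partial B_1}f^2\,d\HH^{d-1}=-2\int_{\partial B_1}u^2(1,\theta)\,d\HH^{d-1}$ coming from the definition of $W_0$ (here one uses $f=u$ on $\partial B_1$). Collecting the remaining radial integrands, the coefficient of $r^{d+1}u^2$ becomes $4-2(d+2)=-2d$, which is precisely \eqref{e:slicing1}. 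Then \eqref{e:slicing2} follows by adding $\int_{B_1}r^2u\,dx=\int_0^1 r^{d+1}\big(\int_{\partial B_1}u\,d\HH^{d-1}\big)\,dr$ to \eqref{e:slicing1} and recognizing the functional $\mathcal F$ from \eqref{e:mathcalF}.

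For \eqref{e:slicing_homo}, I would substitute $u(r,\theta)=r^{\eps}c(\theta)$ into \eqref{e:slicing1}: the variables separate, since $|\nabla_\theta u|^2-2d\,u^2=r^{2\eps}\big(|\nabla_\theta c|^2-2d\,c^2\big)$ and $|\partial_r u|^2=\eps^2 r^{2\eps-2}c^2$, so both radial integrals reduce to $\int_0^1 r^{d+1+2\eps}\,dr=\frac{1}{d+2+2\eps}=\frac{1}{d+2\alpha-2}$ with $\alpha=2+\eps$; since $\eps^2=(\alpha-2)^2$, this yields \eqref{e:slicing_homo}. The whole argument is elementary, with no real obstacle; the only point deserving a word of care is that the integration by parts in $r$ and the trace identity $f|_{\partial B_1}=u(1,\cdot)$ should first be established for smooth $u$ and then extended to $u\in H^1(B_1)$ by density, all the quantities involved being continuous with respect to the $H^1$ norm.
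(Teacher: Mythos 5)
Your argument is correct and follows the paper's proof essentially verbatim: expand $|\nabla(r^2u)|^2$ in polar coordinates, integrate the cross term $2r^{d+2}\partial_r(u^2)$ by parts in $r$ so that the resulting boundary contribution cancels the $-2\int_{\partial B_1}u^2$ term in $W_0$ while the bulk contribution combines with $4r^{d+1}u^2$ to give $-2d\,r^{d+1}u^2$, and then obtain \eqref{e:slicing2} and \eqref{e:slicing_homo} by adding the linear term and by direct substitution respectively. Your closing remark on first treating smooth $u$ and extending by density is a reasonable point of rigor, left implicit in the paper.
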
 

\begin{proof}
	Setting $\theta\in\partial B_1$, $d\theta=d\HH^{d-1}$, we calculate for a function $u=u(r,\theta)$
	\begin{align} 
	 W_0(r^2u)
	&=\int_0^1 \int_{\de B_1}\left(|2\,r u+r^2 \de_r u|^2+r^{2}|\nabla_\theta u|^2\right)\,d\theta\,r^{d-1}\,dr-2\,\int_{\de B_1} u^2 \,d\theta\notag\\
	&=\int_0^1 \int_{\de B_1} \left( 2 r^{2}u^2+r^{4} |\de_r u|^2+ 2\, r^{3} \de_r(u^2)+r^{2} |\nabla_\theta u|^2 \right)\,d\theta\,r^{d-1}\,dr-2\,\int_{\de B_1} u^2 \,d\theta\notag\\
	&=\int_0^1 \int_{\de B_1} \left( 4 r^{2}u^2+r^{4} |\de_r u|^2- 2(d+2)\, r^{2} u^2+r^{2} |\nabla_\theta u|^2 \right)\,d\theta\,r^{d-1}\,dr\notag\\
	&=\int_0^1 r^{d+1} \int_{\de B_1} \left(|\nabla_\theta u|^2  - 2d\,u^2\right)\,d\theta\,dr+\int_0^1 r^{d+3}\int_{\de B_1} |\de_r u|^2\,d\theta\,dr,\notag
	\end{align}
	which is precisely \eqref{e:slicing1}. The identity \eqref{e:slicing2} follows from \eqref{e:slicing1} by the formula
	$$\ds\int_{B_1}u\,dx=\int_0^1\int_{\partial B_r}u\,d\HH^{d-1}\,dr.$$
	Finally, equation \eqref{e:slicing_homo} directly follows from \eqref{e:slicing1} by integrating in $r$.
\end{proof}

\begin{cor}[Decomposition of the energy]\label{cor:orto}
	Suppose that $z_1,z_2\in H^1(B_1)$ are of the form $z_j(r,\theta)=r^2g_j(r)c_j(\theta)$, for $j=1,2$, where the traces $c_1,c_2\in H^1(\partial B_1)$ are orthogonal on the sphere in the following sense:
	\begin{equation}\label{e:orto}
	\int_{\partial B_1}c_1c_2\,d\theta=\int_{\partial B_1}\nabla_\theta c_1\cdot\nabla_\theta c_2\,d\theta=0.
	\end{equation}
	Then, we have 
	\begin{equation}\label{e:orto:energy}
	W_0(z_1+z_2)=W_0(z_1)+W_0(z_2)\qquad\text{and}\qquad W(z_1+z_2)=W(z_1)+W(z_2).
	\end{equation}
\end{cor}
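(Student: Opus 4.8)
The plan is to reduce everything to the Slicing Lemma (Lemma \ref{l:slicing}), applied separately to $z_1$, $z_2$ and $z_1+z_2$, and to observe that the orthogonality condition \eqref{e:orto} makes all the cross terms vanish integrand-by-integrand in the radial variable. More precisely, writing $z_j(r,\theta)=r^2 u_j(r,\theta)$ with $u_j(r,\theta)=g_j(r)c_j(\theta)$, formula \eqref{e:slicing1} expresses $W_0(r^2 u)$ as $\int_0^1 r^{d+1}\int_{\partial B_1}\big(|\nabla_\theta u|^2-2d\,u^2\big)\,d\theta\,dr+\int_0^1 r^{d+3}\int_{\partial B_1}|\partial_r u|^2\,d\theta\,dr$, which is quadratic in $u$. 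Hence for $u=u_1+u_2$ the only thing to check is that the three bilinear pairings appearing in the polarization, namely $\int_{\partial B_1}\nabla_\theta u_1\cdot\nabla_\theta u_2\,d\theta$, $\int_{\partial B_1}u_1 u_2\,d\theta$ and $\int_{\partial B_1}\partial_r u_1\,\partial_r u_2\,d\theta$, integrate to zero against the respective radial weights.

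The key steps, in order, are as follows. First I would fix $r\in(0,1)$ and compute these three spherical integrals: since $u_j(r,\theta)=g_j(r)c_j(\theta)$, we have $\int_{\partial B_1}u_1(r,\cdot)u_2(r,\cdot)\,d\theta=g_1(r)g_2(r)\int_{\partial B_1}c_1 c_2\,d\theta=0$ and $\int_{\partial B_1}\nabla_\theta u_1(r,\cdot)\cdot\nabla_\theta u_2(r,\cdot)\,d\theta=g_1(r)g_2(r)\int_{\partial B_1}\nabla_\theta c_1\cdot\nabla_\theta c_2\,d\theta=0$ by \eqref{e:orto}; and $\partial_r u_j(r,\theta)=g_j'(r)c_j(\theta)$, so $\int_{\partial B_1}\partial_r u_1\,\partial_r u_2\,d\theta=g_1'(r)g_2'(r)\int_{\partial B_1}c_1 c_2\,d\theta=0$ again by \eqref{e:orto}. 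Second, I would expand $|\nabla_\theta(u_1+u_2)|^2=|\nabla_\theta u_1|^2+|\nabla_\theta u_2|^2+2\nabla_\theta u_1\cdot\nabla_\theta u_2$, and similarly for $(u_1+u_2)^2$ and $|\partial_r(u_1+u_2)|^2$, plug into \eqref{e:slicing1}, and integrate in $r$: by Step 1 all the cross terms drop, giving $W_0(z_1+z_2)=W_0(z_1)+W_0(z_2)$. Third, for the $W$-identity I would use \eqref{e:slicing2}, or equivalently add $\int_{B_1}(z_1+z_2)\,dx=\int_{B_1}z_1\,dx+\int_{B_1}z_2\,dx$ (linearity of the integral) to the $W_0$-identity; note that the extra linear term $\int_{\partial B_1}\phi\,d\HH^{d-1}$ inside $\mathcal F$ in \eqref{e:mathcalF} is additive in $\phi$ anyway, so no further cancellation is needed there.

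There is essentially no serious obstacle here; the statement is a bookkeeping consequence of the Slicing Lemma together with the bilinear form of the energy. The one point that deserves a line of care is making sure the manipulations are justified for general $g_j\in$ (the relevant regularity class making $z_j\in H^1(B_1)$) — i.e. that Fubini applies so that the spherical integrals can be computed at fixed $r$ and then integrated, and that $\partial_r u_j$ and $\nabla_\theta u_j$ genuinely split as $g_j'(r)c_j(\theta)$ and $g_j(r)\nabla_\theta c_j(\theta)$ in the weak sense; this is immediate from the product structure $z_j(r,\theta)=r^2 g_j(r)c_j(\theta)$ and the membership $c_j\in H^1(\partial B_1)$. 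So I would present the proof as: reduce to \eqref{e:slicing1} and \eqref{e:slicing2}, polarize, kill the cross terms via \eqref{e:orto} at each fixed radius, integrate in $r$.
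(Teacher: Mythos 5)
Your proposal is correct and follows essentially the same route as the paper: the paper's proof is a one-line appeal to formulas \eqref{e:slicing1} and \eqref{e:slicing2} combined with the orthogonality \eqref{e:orto}, and you carry out exactly that computation by polarizing the quadratic integrands at each fixed radius and letting the cross terms vanish. The only added content is the (harmless) Fubini/regularity remark at the end, which the paper leaves implicit.
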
	
\begin{proof}It is sufficient to apply the formulas \eqref{e:slicing1} and \eqref{e:slicing2}, and then use \eqref{e:orto}.
\end{proof}
The next lemma is essentially the identity \eqref{e:slicing2} from the Slicing Lemma for competitors defined by reparametrization of the radial coordinate. We will use the following notation:
\begin{equation}\label{e:nabla_mathcalF}
\nabla\mathcal F(\phi)=-2\Delta_{\partial B_1}\phi-4d\phi+1\qquad\text{for}\qquad \phi\in H^2(\partial B_1),
\end{equation}
$$\psi\cdot\nabla\mathcal F(\phi)=\int_{\partial B_1}\psi\big(-2\Delta_{\partial B_1}\phi-4d\phi+1\big)\,d\HH^{d-1}\qquad\text{for}\qquad \phi\in H^2(\partial B_1) \quad\text{and}\quad \psi\in L^2(\partial B_1).$$
\begin{lemma}[The slicing lemma reparametrized]\label{l:slicing2}
Suppose that $\psi:[0,+\infty)\to H^2(\partial B_1)$ is a function such that 
$$\psi\in C^1\big((0,+\infty);L^2(\partial B_1)\big)\cap C^0\big([0,+\infty);H^1(\partial B_1)\big)\cap C^0\big((0,+\infty);H^2(\partial B_1)\big),$$
and let $T\in(0,+\infty]$. We define $\varphi:[0,+\infty)\to H^2(\partial B_1)$ as 
$$\varphi(t):=\psi(t)\quad\text{if}\quad t\in[0,T),\qquad \varphi(t):=\psi(T)\quad\text{if}\quad t\ge T.$$
and the function $u:B_1\to \R$ as  
$$u(r,\theta)=\varphi(-\kappa\ln r,\theta),$$
where $\kappa>0$ is fixed. Then, we have 

\begin{equation}\label{e:slicing666}
W\big(r^2u\big)=\frac1\kappa\int_0^\infty \mathcal F(\varphi(t)) \, e^{-\frac{t(d+2)}\kappa}\,dt+\kappa\int_{0}^\infty\|\varphi'(t)\|_{L^2(\partial B_1)}^2   \,e^{-\frac{t(d+2)}\kappa}\,dt, 
\end{equation}	
and also
\begin{equation}\label{e:slicing33}
W\big(r^2u\big)=\frac{\mathcal F(\psi(0))}{d+2}+\int_0^T \left(\frac{1}{d+2}\nabla\mathcal F(\psi(t)) \cdot \psi'(t)+ \kappa\|\psi'(t)\|^{2}_{L^2(\partial B_1)}  \right)\,e^{-\frac{t(d+2)}\kappa}   \,dt.
\end{equation}	
In particular, if $\varphi$ is constant in $t$, then 
\begin{equation}\label{e:slicing44}
W\big(r^2u\big)=\frac{\mathcal F(\varphi(0))}{d+2}.
\end{equation}	
\end{lemma}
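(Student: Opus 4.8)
The plan is to reduce everything to the Slicing Lemma \ref{l:slicing} and a change of variables in the radial coordinate. Starting from \eqref{e:slicing2}, I substitute $r=e^{-t/\kappa}$, i.e. $t=-\kappa\ln r$, which is a diffeomorphism of $(0,1]$ onto $[0,+\infty)$ sending $u(r,\cdot)$ to $\varphi(t,\cdot)$, with $dr=-\tfrac1\kappa e^{-t/\kappa}\,dt$ and $r^{d+1}\,dr=\tfrac1\kappa e^{-t(d+2)/\kappa}\,dt$. The first term of \eqref{e:slicing2} then becomes $\tfrac1\kappa\int_0^\infty \mathcal F(\varphi(t))\,e^{-t(d+2)/\kappa}\,dt$. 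For the second term I compute $\de_r u(r,\theta)=\varphi'(-\kappa\ln r,\theta)\cdot(-\kappa/r)$, hence $r^{d+3}|\de_r u|^2=\kappa^2 r^{d+1}|\varphi'(t)|^2$, and the same substitution turns $\int_0^1 r^{d+3}\int_{\de B_1}|\de_r u|^2\,d\HH^{d-1}\,dr$ into $\kappa\int_0^\infty\|\varphi'(t)\|_{L^2(\de B_1)}^2\,e^{-t(d+2)/\kappa}\,dt$. Adding the two gives \eqref{e:slicing666}.

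To pass from \eqref{e:slicing666} to \eqref{e:slicing33}, I split both integrals at $t=T$. On $[T,+\infty)$ we have $\varphi\equiv\psi(T)$ and $\varphi'\equiv0$, so that part contributes only $\tfrac1\kappa\mathcal F(\psi(T))\int_T^\infty e^{-t(d+2)/\kappa}\,dt=\tfrac1{d+2}\mathcal F(\psi(T))\,e^{-T(d+2)/\kappa}$. On $[0,T]$ the key computation is the chain rule $\tfrac{d}{dt}\mathcal F(\psi(t))=\psi'(t)\cdot\nabla\mathcal F(\psi(t))$, with $\nabla\mathcal F$ as in \eqref{e:nabla_mathcalF}; combined with the exponential factor this yields $\tfrac{d}{dt}\big(\mathcal F(\psi(t))e^{-t(d+2)/\kappa}\big)=\big(\psi'(t)\cdot\nabla\mathcal F(\psi(t))\big)e^{-t(d+2)/\kappa}-\tfrac{d+2}{\kappa}\mathcal F(\psi(t))e^{-t(d+2)/\kappa}$. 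Integrating this identity on $[0,T]$ and solving for $\tfrac1\kappa\int_0^T\mathcal F(\psi(t))e^{-t(d+2)/\kappa}\,dt$ expresses it through $\mathcal F(\psi(0))$, $\mathcal F(\psi(T))e^{-T(d+2)/\kappa}$ and $\tfrac1{d+2}\int_0^T\big(\psi'\cdot\nabla\mathcal F(\psi)\big)e^{-t(d+2)/\kappa}\,dt$. Plugging this back into \eqref{e:slicing666}, the two occurrences of $\mathcal F(\psi(T))e^{-T(d+2)/\kappa}$ — one coming from the tail $[T,+\infty)$, one from the boundary term of the integration by parts — cancel, and one is left precisely with \eqref{e:slicing33}. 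The identity \eqref{e:slicing44} is then just the special case $\psi'\equiv0$ (equivalently $T=0$) of \eqref{e:slicing33}.

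The point requiring care is the chain rule and integration by parts on $[0,T]$. The hypotheses only give $\psi\in C^1\big((0,+\infty);L^2(\de B_1)\big)\cap C^0\big((0,+\infty);H^2(\de B_1)\big)$, so I would justify $\tfrac{d}{dt}\mathcal F(\psi(t))=\psi'(t)\cdot\nabla\mathcal F(\psi(t))$ only for $t>0$ by writing $\mathcal F(\phi)=\big\langle\phi,(-\Delta_{\de B_1}-2d)\phi\big\rangle_{L^2}+\langle1,\phi\rangle_{L^2}$ and differentiating this quadratic-plus-linear expression, where the $L^2$-differentiability of $\psi$ together with its $H^2$-continuity makes the cross terms converge. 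Consequently the integration by parts must first be performed on $[\eps,T]$ and then one lets $\eps\to0$, using $\psi\in C^0\big([0,+\infty);H^1(\de B_1)\big)$ and the continuity of $\mathcal F$ on $H^1(\de B_1)$ to get $\mathcal F(\psi(\eps))\to\mathcal F(\psi(0))$. One should also note at the outset that \eqref{e:slicing2} is being applied to $r^2u$, so either the identities are read in $[0,+\infty]$ or one checks beforehand that the stated regularity, together with the exponentially decaying weight $e^{-t(d+2)/\kappa}$, guarantees $r^2u\in H^1(B_1)$; the former reading is enough for the purposes of the lemma.
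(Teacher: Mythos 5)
Your proof is correct and follows essentially the same route as the paper: change of variables $t=-\kappa\ln r$ in the slicing identity \eqref{e:slicing2} to get \eqref{e:slicing666}, split at $t=T$ using the definition of $\varphi$, and integrate by parts on $[0,T]$ so the tail and boundary terms cancel. The additional care you take about where $\tfrac{d}{dt}\mathcal F(\psi(t))=\psi'(t)\cdot\nabla\mathcal F(\psi(t))$ is justified (only for $t>0$, so integrating by parts on $[\eps,T]$ and then sending $\eps\to 0$ using $H^1$-continuity of $\psi$ at $0$) is a genuine refinement of a step the paper leaves implicit, but it does not change the method.
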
	
\begin{proof}Using the identity \eqref{e:slicing2} and the change of variables $t=-\kappa \ln r$, we compute
	\begin{align*}\label{e:Loj1}
	W(r^2u)&=\int_0^1 \mathcal F\big(u(r)\big) r^{d+1} \,dr+\int_0^1r^{d+3}\int_{\de B_1} |\de_r u|^2\,d\HH^{d-1}\,dr\\
&=\int_0^1 \mathcal F\big(\varphi(-\kappa \ln r)\big) r^{d+1} \,dr+\int_0^1\kappa^2 r^{d+1}\|\varphi'(-\kappa \ln r)\|_{L^2(\partial B_1)}^2\,dr\\
	&=\frac1\kappa\int_0^\infty \mathcal F(\varphi(t)) \, e^{-\frac{t(d+2)}\kappa}\,dt+\kappa\int_{0}^\infty\|\varphi'(t)\|_{L^2(\partial B_1)}^2   \,e^{-\frac{t(d+2)}\kappa}\,dt\notag 
	\end{align*}
	Now, by the definition of $\psi$, we get 
		\begin{align*}
	W(r^2u)&=\frac1\kappa\int_0^T \mathcal F(\psi(t)) \,e^{-\frac{t(d+2)}\kappa} dt+\frac1\kappa\int_T^{+\infty} \mathcal F(\psi(T)) \,e^{-\frac{t(d+2)}\kappa} dt+\kappa\int_{0}^T\|\psi'(t)\|_{L^2(\partial B_1)}^2  e^{-\frac{t(d+2)}\kappa}\, dt \\
	&=\frac1\kappa\int_0^T \mathcal F(\psi(t)) \,e^{-\frac{t(d+2)}\kappa} dt+\frac{\mathcal F(\psi(T))}{d+2}e^{-\frac{T(d+2)}\kappa} +\kappa\int_{0}^T\|\psi'(t)\|_{L^2(\partial B_1)}^2  e^{-\frac{t(d+2)}\kappa}\, dt.
	\end{align*}
	Now, an integration by parts gives
		\begin{align*}
\frac1\kappa\int_0^T \mathcal F(\psi(t)) \,e^{-\frac{t(d+2)}\kappa}\,dt=\frac{1}{d+2}\int_0^T \psi'(t)\cdot\nabla\mathcal F(\psi(t)) \,e^{-\frac{t(d+2)}\kappa}  dt+\frac{\mathcal F(\psi(0))}{d+2} -e^{-\frac{T(d+2)}\kappa} \frac{\mathcal F(\psi(T))}{d+2},
	\end{align*}
which concludes the proof.
\end{proof}	
\subsection{Spectrum of the spherical Laplacian}\label{sub:fourier}
Let $0<\lambda_1\le \lambda_2\le \dots\le\lambda_j\le \dots$ be the eigenvalues (counted with multiplicity) of the spherical Laplace-Beltrami operator and $\{\phi_j\}_{j\ge1}$ be the corresponding family of eigenfunctions, that is the solutions of 
\begin{equation}
\label{defn:eigenval-in-s}
-\Delta_{\mathbb S^{d-1}} \phi_j=\lambda_j\phi_j\quad\text{on}\quad \mathbb S^{d-1},\qquad \int_{\mathbb S^{d-1}}\phi_j^2(\theta)\,d\theta=1.
\end{equation}
Then, for any fixed $i,j\in\N$, we have 
$$\int_{\partial B_1}\phi_i\phi_j\,d\theta=\delta_{ij}\qquad\text{and}\qquad \int_{\partial B_1}\nabla_\theta \phi_i\cdot \nabla_\theta\phi_j\,d\theta=\lambda_i\delta_{ij}.$$
It is well known that the eigenfunctions of the Spherical Laplacian on $\mathbb S^{d-1}$ are in fact the traces of homogeneous harmonic polynomials in $\R^d$. In fact, for a given $\alpha\ge 0$, it is immediate to check that a function $\phi:\partial B_1\to\R$ is an eigenfunction corresponding to the eigenvalue 
	$$\lambda(\alpha):=\alpha(\alpha+d-2),$$ 
	if and only if, its $\alpha$-homogeneous extension $\varphi(r,\theta)=r^{\alpha}\phi(\theta)$ is harmonic in $B_1$. We will denote by $\alpha_j$ the homogeneity corresponding to the eigenvalue $\lambda_j$, that is, we have
	\begin{equation}\label{e:alpha_j}
	\lambda_j=\lambda(\alpha_j)=\alpha_j(\alpha_j+d-2).
	\end{equation}
	Notice that, since the  homogeneous harmonic functions in $\R^d$ are polynomials, we have that $\alpha_j\in\N$. Thus, we can easily identify the eigenvalues and the eigenfunctions of the spherical Laplacian corresponding to the first few elements of the spectrum. Precisely, we have that :
	
	\medskip
	
	$\bullet$ $\alpha_1=\lambda_1=0$ and the corresponding eigenfunction is the constant $\phi_1=|\partial B_1|^{-\sfrac12}= (d\omega_d)^{-\sfrac12}$. 
	
	\medskip
	
	$\bullet$ $\lambda_2=\dots=\lambda_{d+1}=d-1$, the corresponding homogeneity is $\alpha_2=\dots=\alpha_{d+1}=1$, while the
	eigenspace  coincides with the ($d$-dimensional) space of linear functions in $\R^d$. 
	
	\medskip
	
	$\bullet$ $\lambda_{d+2}=\dots=\lambda_{\sfrac{d(d+3)}2}=2d$, the corresponding homogeneity is $\alpha_{d+2}=\dots=\alpha_{\sfrac{d(d+3)}2}=2$; the corresponding eigenspace $E_{2d}$ 
	is generated by the (restrictions to $\mathbb{S}^{d-1}$ of the) $2$-homogeneous harmonic polynomials:
	$$E_{2d}=\{Q_A \colon \R^d \to \R \,:\, Q_A(x) = x \cdot Ax,\,\text{ $A$  symmetric with }{\rm{tr}} A = 0\}.$$
	In particular, if $Q\in \mathcal S$ is an admissible singular blow-up limit, then $Q$ is of the form
	\begin{equation*}
	Q(x)=\frac1{4d}|x|^2+Q_A(x)\,\quad   \mbox{for some harmonic polynomial} \quad Q_A\in E_{2d}. 
	\end{equation*}
	$\bullet$ Finally, if $j> \frac{d(d+3)}2$ (that is $\lambda_j>2d$), then  the corresponding homogeneity is at least $3$ and so 
	$$\lambda_j\ge 3(3+d-2)=3(d+1).$$ 


\subsection{Decomposition of the trace and energy of the $2$-homogeneous extension}\label{sub:decomposition}
Let $c$ be as in Theorem \ref{t:epi:sing}. Throughout the rest of the paper we will use the same decomposition of the trace with the same notation. We will denote by $Q\in\mathcal S$ the projection of $c$ on the set of critical points $\mathcal S$ given in \eqref{e:critical_points}. Precisely, $Q$ realizes the minimum
$$\dist_{2}(c,\mathcal S):=\min\big\{\|c-Q\|_{L^2(\partial B_1)}\ :\ Q\in\mathcal S \big\}.$$
We now decompose the function $c-Q\in H^1(\partial B_1)$ in Fourier series as  
$$\ds c(\theta)-Q(\theta)=\sum_{j=1}^{\infty}c_j\phi_j(\theta)$$
where $c_j$ are the Fourier coefficients
\begin{equation}\label{e:coef_fou}
\ds c_j:=\int_{\partial B_1}\big(c(\theta)-Q(\theta)\big)\,\phi_j(\theta)\,d\theta.
\end{equation}
Finally, we will write $c:\partial B_1\to\R$ as 
\begin{equation}\label{e:decomposition_of_c}
c=Q+\eta_++\eta_0+\eta_-,
\end{equation}
where the functions $\eta_+$, $\eta_0$ and $\eta_-$ (defined on $\partial B_1$) are given by 
$$\ds \eta_-:=\sum_{j\,:\, \alpha_j< 2}c_j\phi_j\ ,\qquad \eta_0:=\sum_{j\,:\, \alpha_j=2}c_j\phi_j\qquad\text{and}\qquad \eta_+:=\sum_{j\,:\, \alpha_j>2}c_j\phi_j\ .$$

\subsection{Energy of the $2$-homogeneous extension $z$}\label{sub:decomposition}
In terms of the decomposition \eqref{e:decomposition_of_c}, $2$-homogeneous extension of $c(\theta)$ can be written as:
\begin{equation}\label{e:decomposition_z}
z(r,\theta)=r^2c(\theta)=Q(r\theta)+r^2\eta_-(\theta)+r^2\eta_0(\theta)+r^2\eta_+(\theta),
\end{equation}
where we recall that $Q\in\mathcal S$ is $2$-homogeneous: 
$$Q(r\theta)=r^2Q(\theta)\quad\text{for every}\quad r>0\quad\text{and}\quad \theta\in\partial B_1.$$
Notice that the functions $\eta_-$, $\eta_0$ and $\eta_+$ are orthogonal on the sphere in the sense that 
$$\int_{\partial B_1}\eta_i(\theta)\eta_j(\theta)\,d\theta=\int_{\partial B_1}\nabla_\theta\eta_i(\theta)\cdot \nabla_\theta\eta_j(\theta)\,d\theta=0\qquad\text{whenever}\qquad i\neq j\in\{+,-,0\}.$$
Thus, by Lemma \ref{l:WW0} and Corollary \ref{cor:orto}, we can compute the term in the right-hand side of the log-epiperimetric inequality in terms of $\eta_+$, $\eta_0$ and $\eta_-$. 
\begin{equation}\label{e:decomposition_z}
W(z)-W(Q)=W_0(z-Q)=W_0\big(r^2\eta_-(\theta)\big)+W_0\big(r^2\eta_0(\theta)\big)+W_0\big(r^2\eta_+(\theta)\big).
\end{equation}

\section{The best direction and a \L ojasiewicz-type inequality}\label{s:best}
Let $c=Q+\eta_++\eta_0+\eta_-$ be as in Section \ref{sub:decomposition}. Let $M$ be the maximum of the negative part of $\eta_-+\eta_0+Q$, that is,  

\begin{equation}\label{e:def:M}
M:=\max_{\theta\in \partial B_1}\big\{-\eta_-(\theta)-\eta_0(\theta)-Q(\theta)\big\}.
\end{equation}
We define the functions $h_2:\partial B_1\to\R$ and $h_\alpha:\partial B_1\to\R$ as
\begin{equation}\label{e:h2halpha}
h_2:=Q+\eta_-+\eta_0+8dM\left(\frac1{4d}-Q\right)\qquad \text{and}\qquad h_\alpha:=\eta_+-8dM\left(\frac1{4d}-Q\right)\,.
\end{equation}
The role of the correction term $\ds 8dM\left(\frac1{4d}-Q\right)$ will be explained in Section \ref{sub:direct}. In the lemma below we gather the key estimates, which we will use in both proofs of Theorem \ref{t:epi:sing} - the one based on the direct construction of the competitor (Section \ref{s:direct}) and the one based on the definition of a flow and a \L ojasiewicz-type inequality (Section \ref{s:constructive}).

\begin{lemma}[Key Estimate]\label{p:key} Let $c$, $Q$, $\eta_-$, $\eta_0$, $\eta_+$, $M$, $h_2$ and $h_\alpha$ be as above :
	$$c(\theta)=Q(\theta)+\eta_-(\theta)+\eta_0(\theta)+\eta_+(\theta)=h_2(\theta)+h_\alpha(\theta),\quad \theta\in \partial B_1.$$ 
	Then, there is a dimensional constant $\delta>0$ such that the following holds. If 
	$$\|c-Q\|_{L^2(\partial B_1)}\le \delta,$$ 
then we have:
	\begin{itemize}
	\item[(i)] $h_2(\theta)\geq 0\ $ for every $\ \theta\in\partial B_1$.
	\medskip
	\item[(ii)] there is a dimensional constant $C_d>0$ such that 
	\begin{equation}\label{e:estimate_M}
	M^{d+1}\le C_d\|\eta_+\|_{L^2(\partial B_1)}^2\ ;
	\end{equation}
	\item[(iii)] for every $t\in\R$, we have the  following identities : 
		\begin{equation}\label{e:new_loja}
		h_\alpha\cdot \nabla \mathcal F(h_2+th_\alpha) =2t\int_{\partial B_1}\Big(|\nabla_\theta \eta_+|^2-2d\,\eta_+^2\Big)\,d\theta\,,
		\end{equation}	
		
			\begin{equation}\label{e:new_loja_F}
	 \mathcal F(h_2+th_\alpha) =\mathcal F(Q)+\int_{\partial B_1}\Big(|\nabla_\theta \eta_-|^2-2d\,\eta_-^2\Big)\,d\theta+t^2\!\!\int_{\partial B_1}\Big(|\nabla_\theta \eta_+|^2-2d\,\eta_+^2\Big)\,d\theta\,,
		\end{equation}	
		where we recall that $\mathcal F$ and $\nabla \mathcal F$ are given by \eqref{e:mathcalF} and \eqref{e:nabla_mathcalF}.
	\end{itemize}
\end{lemma}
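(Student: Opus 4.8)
\textbf{Proof plan for Lemma \ref{p:key}.}

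The plan is to treat the three items essentially independently, since each reduces to a computation exploiting the Fourier decomposition \eqref{e:decomposition_of_c} and the orthogonality of $\eta_-$, $\eta_0$, $\eta_+$ on the sphere, together with the special structure of $Q\in\mathcal S$ (namely $Q = \frac1{4d}|x|^2 + Q_A$ with $Q_A\in E_{2d}$, and in particular that $\frac1{4d}-Q$ on $\partial B_1$ equals $-Q_A$, a pure second-mode eigenfunction). The correction term $8dM(\frac1{4d}-Q)$ is designed precisely so that (a) it is a second-mode eigenfunction (hence orthogonal to $\eta_-$ and $\eta_+$ on the sphere), (b) adding it to $\eta_0$ keeps the total a second-mode function, and (c) it is large enough to absorb the negative part of $\eta_-+\eta_0+Q$ into something nonnegative. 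So the main work is bookkeeping, and the only genuinely nontrivial estimate is (ii).

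For (i): I would write $h_2 = Q + \eta_- + \eta_0 + 8dM(\frac1{4d}-Q)$. Since $0\le Q\le \frac1{4d}\cdot(\text{bounded})$ — more precisely since on $\partial B_1$ one has $Q(\theta)\le \|A\|$-type bounds and, crucially, $\frac1{4d}-Q \le \frac1{4d}$ — I can estimate $8dM(\frac1{4d}-Q)\ge 8dM(\frac1{4d} - \frac1{4d}) $... this needs care; the right observation is that $Q\ge 0$ and $\frac1{4d}-Q$ can be negative, but $|\frac1{4d}-Q|=|Q_A|\le C\|A\|\le C$, while on the set where $\eta_-+\eta_0+Q$ is most negative one uses $8dM(\frac1{4d}-Q) = 2M - 8dM\,Q$ and compares with $M = \max(-\eta_--\eta_0-Q)$. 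I'd split: where $\frac1{4d}-Q\ge 0$, the correction only helps; where $\frac1{4d}-Q<0$, i.e. $Q>\frac1{4d}$, then $-\eta_--\eta_0-Q \le -\eta_--\eta_0-\frac1{4d}$ is bounded above by something controlled by $\delta$, while $Q\le \|A\|$ is bounded, so the correction term is $O(M)$ and $M=O(\delta^{2/(d+1)})$ is small — then nonnegativity follows by absorbing. The clean way is: $h_2 = Q(1-8dM) + (\eta_-+\eta_0) + 2M \ge Q(1-8dM) - M + 2M = Q(1-8dM)+M\ge 0$ for $\delta$ small (using $\|\eta_-+\eta_0\|_{L^\infty}$... but $L^\infty$ control of $\eta_-$ needs a harmonic-polynomial / Liouville-type bound, noting $\eta_-$ lies in the finite-dimensional span of modes with $\alpha_j<2$). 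This pointwise inequality is the content of (i).

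For (ii): the estimate $M^{d+1}\le C_d\|\eta_+\|_{L^2}^2$. Here $M=\max(-\eta_--\eta_0-Q) = \max(-c+\eta_+)$. Since $c\ge 0$, we get $-c+\eta_+\le \eta_+$, hence $M\le \max_{\partial B_1}\eta_+$ (if this is $\le 0$ then $M\le 0$ and there is nothing to prove, taking $M_+$). Now $\eta_+$ is a sum of eigenfunctions with $\alpha_j\ge 3$, and the key point is an \emph{improvement-of-flatness / interior-gradient} estimate: a harmonic function (the homogeneous harmonic extension of a high-mode piece) vanishing appropriately, or more simply the fact that for a function on $\partial B_1$ in the span of modes $\ge 3$ with small $L^2$ norm, its $L^\infty$ norm times an appropriate power is controlled. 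The cleanest route: the $3$-homogeneous harmonic extension $v$ of $\eta_+$ is harmonic, $v=0$ would need... — actually I expect the argument is: extend $\eta_+$ to $B_1$ by its harmonic extension $E$; at a point $\theta_0$ where $\eta_+(\theta_0)=M$ (roughly), use that $c\ge0$ and $z=r^2 c$ combined with the obstacle structure, or use a barrier: the function $\frac1{4d}|x|^2+Q_A(x)+($harmonic ext. of $\eta_-+\eta_0) + E$ equals $c$ on $\partial B_1$, is $\ge0$ there; comparing with the fundamental/homogeneous structure and using that $E$ near the boundary behaves like $r^3\times(\cdots)$ forces $M^{d+1}\lesssim\|\eta_+\|_{L^2}^2$ via a mean-value / Poisson-kernel estimate on a small boundary ball. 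I expect this is the step the paper handles with a specific barrier computation, and it is \textbf{the main obstacle}: one must convert the pointwise sup of $\eta_+$ into an $L^2$ bound with the sharp exponent $d+1$, which encodes the homogeneity gap ($\alpha=3$ versus the critical $\alpha=2$, giving the extra decay rate that, integrated against $r^{d+1}$ in \eqref{e:slicing1}, produces the power $d+1$).

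For (iii): these are pure identities, so I would just expand. Write $h_2+th_\alpha = Q+\eta_-+\eta_0 + t\eta_+ + (1-t)8dM(\frac1{4d}-Q)$. Now group: $Q + 8dM(1-t)(\frac1{4d}-Q) + \eta_0$ is a \emph{second-mode} object (call it $P$, with $P=\frac1{4d}|x|^2+(\text{harmonic, }{\rm tr}=0)$ on $\partial B_1$ up to the constant mode coming from $\frac1{4d}$), while $\eta_-$ is a sum of modes $\alpha_j\le 1$ and $t\eta_+$ is a sum of modes $\alpha_j\ge 3$; all three groups are mutually $H^1(\partial B_1)$-orthogonal in the sense of \eqref{e:orto}. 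Apply $\nabla\mathcal F(\phi)=-2\Delta_{\partial B_1}\phi-4d\phi+1$ from \eqref{e:nabla_mathcalF} and pair with $h_\alpha = \eta_+ - 8dM(\frac1{4d}-Q)$: the constant "$+1$" pairs to $\int_{\partial B_1}h_\alpha$; the $-4d\phi$ and $-2\Delta\phi$ terms, by orthogonality, only see the $t\eta_+$ component against the $\eta_+$ component (and the second-mode $8dM(\frac1{4d}-Q)$ against the second-mode part of $P$, but those cancel because on the second eigenspace $-2\Delta_{\partial B_1} - 4d$ acts as $-2\cdot 2d - 4d\ne 0$... so I must check these don't vanish — in fact on $E_{2d}$, $-\Delta_{\partial B_1}=2d$ so $-2\Delta-4d = 4d-4d=0$: that is exactly why the second-mode pieces drop out!). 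That identity $(-2\Delta_{\partial B_1}-4d)|_{E_{2d}}=0$ is the crux of both \eqref{e:new_loja} and \eqref{e:new_loja_F}, and also explains why $\mathcal F$ restricted to second modes is "critical." So \eqref{e:new_loja} reduces to the $\eta_+$-vs-$t\eta_+$ pairing giving $2t\int(|\nabla_\theta\eta_+|^2-2d\eta_+^2)$ — using that $-2\Delta_{\partial B_1}\eta_+$ paired with $\eta_+$ gives $2\int|\nabla_\theta\eta_+|^2$ and $-4d\eta_+\cdot\eta_+$ gives $-4d\int\eta_+^2$, and noting $\int h_\alpha\cdot 1$ against... wait, we also need the linear "$+1$" term to cancel: $h_\alpha\cdot 1 = \int\eta_+ - 8dM\int(\frac1{4d}-Q)$; since $\eta_+$ has no zeroth mode $\int_{\partial B_1}\eta_+=0$, and $\int_{\partial B_1}(\frac1{4d}-Q) = \frac{|\partial B_1|}{4d} - \int Q$; one checks $\int_{\partial B_1}Q\,d\theta = \frac{|\partial B_1|}{4d}$ since ${\rm tr}\,A=\frac14$ and $\int_{\partial B_1}x_ix_j = \frac{|\partial B_1|}{d}\delta_{ij}$, so this vanishes too. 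Hence all non-$\eta_+$ contributions die and \eqref{e:new_loja} follows; \eqref{e:new_loja_F} follows from the analogous expansion of $\mathcal F(\phi)=\int(|\nabla_\theta\phi|^2-2d\phi^2+\phi)$ using the same three-fold orthogonality, $\mathcal F$ additive on orthogonal pieces, $\mathcal F$ of the second-mode group $=\mathcal F(Q)$ (the correction and $\eta_0$ contribute zero to $\mathcal F$ because on $E_{2d}$ the quadratic form $\int|\nabla_\theta\cdot|^2-2d(\cdot)^2$ vanishes, and the linear term integrates to zero as just computed), and $\mathcal F$ of $t\eta_+$ $= t^2\int(|\nabla_\theta\eta_+|^2-2d\eta_+^2)$ (linear term again zero). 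I'd present (iii) as a short computation once the orthogonality and the vanishing-on-$E_{2d}$ facts are isolated as the two lemmas doing the work.
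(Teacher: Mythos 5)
Your item (iii) is essentially the paper's proof: the two pillars you isolate are exactly the ones used, namely that $(-2\Delta_{\partial B_1}-4d)$ annihilates the $2d$-eigenspace (so the second-mode pieces $\eta_0$ and $8dM\bigl(\tfrac1{4d}-Q\bigr)$ drop out of every pairing), and that the three groups of modes are $L^2$- and $H^1$-orthogonal, together with $\int_{\partial B_1}\eta_+\,d\theta=0$ and $\int_{\partial B_1}\bigl(\tfrac1{4d}-Q\bigr)\,d\theta=0$.

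Your item (i) contains a calculational error. With $h_2=Q(1-8dM)+(\eta_-+\eta_0)+2M$, the definition \eqref{e:def:M} of $M$ gives only $\eta_-+\eta_0\ge -M-Q$, not $\eta_-+\eta_0\ge -M$; substituting correctly yields $h_2\ge M(1-8dQ)$, which is negative wherever $Q>\tfrac{1}{8d}$, and such points exist for a generic $Q\in\mathcal S$ since the largest eigenvalue of $A$ can be close to $\tfrac14$. The paper avoids this by splitting $\partial B_1$ into $\{Q\ge 2C\delta\}$, where nonnegativity follows from $\eta_-+\eta_0+\tfrac12 Q\ge 0$ together with $\tfrac12-8dM\ge 0$, and $\{Q\le 2C\delta\}$, where it follows from $8dM\bigl(\tfrac1{4d}-2C\delta\bigr)\ge M$ and the definition of $M$. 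The uniform $L^\infty$ bound $\|\eta_-+\eta_0\|_{L^\infty}\le C\|c-Q\|_{L^2}$ you invoke (valid because $\eta_-+\eta_0$ lives in the finite-dimensional span of low modes) is indeed the engine, but the one-line estimate does not close; you need the two-region split.

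Item (ii) is the genuine gap. You reduce correctly to $M\le\max_{\partial B_1}\eta_+$, since $c=P+\eta_+\ge0$ with $P:=\eta_-+\eta_0+Q$ forces $\eta_+\ge -P$, but you then hope for a reverse inequality $\max\eta_+\le C\|\eta_+\|_{L^2}^{2/(d+1)}$ via a barrier or Poisson-kernel estimate that you acknowledge you do not have. No such estimate holds for a general high-mode $\eta_+$: its $L^\infty$ norm is not controlled by its $L^2$ norm. The paper extracts the exponent $d+1$ from $P$, not from $\eta_+$. Since $P$ is a fixed linear combination of finitely many low eigenfunctions, $\|\nabla_\theta P\|_{L^\infty(\partial B_1)}\le L$ for a dimensional constant $L$. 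Consequently, near a point $\theta_0$ where $-P(\theta_0)=M$, the negative part $-\inf\{P,0\}$ dominates the tent function $\max\{M-L|\theta-\theta_0|,0\}$; integrating the tent over a ball of radius $M/L$ in a $(d-1)$-dimensional local chart (Lemma~\ref{l:revised_version}) gives $\|\inf\{P,0\}\|_{L^2(\partial B_1)}^2\ge c_d\, M^{d+1}/L^{d-1}$. On the other hand $|\inf\{P,0\}|\le|\eta_+|$ pointwise by nonnegativity of $c$, so $\|\inf\{P,0\}\|_{L^2}\le\|\eta_+\|_{L^2}$, and combining the two yields \eqref{e:estimate_M}. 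The ingredient you were missing is the pointwise Lipschitz control of the low-mode polynomial $P$, not any property of the harmonic extension of $\eta_+$.
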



\begin{proof} We start by proving (i).\\
Notice that there is a dimensional constant $C_d$ such that 
$$\|\phi_j\|_{L^\infty(\partial B_1)}+\|\nabla_\theta \phi_j\|_{L^\infty(\partial B_1)}\le C_d\quad\text{for every}\quad j\in\N\quad\text{such that}\quad \alpha_j\le 2d. $$
Now, since by definition 
$$\eta_-(\theta)+\eta_0(\theta)=\sum_{j\,:\,\alpha_j\le 2d} c_j \phi_j(\theta),$$
we can find another dimensional constant $C>0$ such that 
\begin{align*}
\|\eta_-+\eta_0\|_{L^\infty}&\le \sum_{j\,:\,\alpha_j\le 2d} |c_j|\|\phi_j\|_{L^\infty(\partial B_1)}\\
&\le C\Big(\sum_{j\,:\,\alpha_j\le 2d} c_j^2\ \Big)^{\sfrac12}= C\|\eta_-+\eta_0\|_{L^2(\partial B_1)}\le C\,\|c-Q\|_{L^2(\partial B_1)}\,,
\end{align*}
where we recall that 
$$\|c-Q\|_{L^2(\partial B_1)}=\|\eta_-+\eta_0+\eta_+\|_{L^2(\partial B_1)}.$$


\noindent We now choose $\delta>0$ such that $\ds 4C\delta\le \frac{1}{4d}$. We next show that $h_2\ge 0$ on each of the sets 
$$\{\theta\in \partial B_1\ :\ Q(\theta)\ge 2C\delta\}\qquad \text{and}\qquad \{\theta\in \partial B_1\ :\ Q(\theta)\le 2C\delta\}.$$
Indeed, we have the following two cases.
\begin{itemize}
	\item Consider the set $\{Q\ge 2C\delta\}\subset\partial B_1$. We first notice that 
	$$\eta_-+\eta_0+\frac12Q\ge 0$$ 
	on this set. Indeed, for any $\theta\in \{Q\ge 2C\delta\}$, we have 
	\begin{align*}
	\eta_-(\theta)+\eta_0(\theta)+\frac12Q(\theta)&\ge -\|\eta_-+\eta_0\|_{L^\infty(\partial B_1)}+\frac12Q(\theta)\ge -C\,\|c-Q\|_{L^2(\partial B_1)}+C \delta \ge 0.
	\end{align*}
	We now decompose $h_2$ as follows :
	$$h_2=\left(\eta_-+\eta_0+\frac12Q\right)+2M+Q\left(\frac12-8dM\right),$$
	where the first and the second terms are nonnegative. In order to prove that also the third one is nonnegative, we notice that since $Q\ge 0$, we have 
	\begin{align*}
	M\le \|\eta_-+\eta_0\|_{L^\infty(\partial B_1)}\le C \delta\,,
	\end{align*}
	so, by the choice of $\delta$, 
	$$\frac12-8dM\ge \frac12-8dC\delta=2d\left(\frac1{4d}-4C\delta\right)\ge 0.$$
This concludes the first part of the proof of (i).\medskip 
	
	\item Consider the set $\{Q\le 2C\delta\}\subset\partial B_1$. We have that
	\begin{align*}
	h_2=\eta_-+\eta_0+Q+8dM\left(\frac1{4d}-Q\right)&\ge  \eta_-+\eta_0+Q+8dM\left(\frac1{4d}-2C\delta\right)\\
	&\ge   \eta_-+\eta_0+Q+M\ge 0,
	\end{align*}
	where the last inequality is due to the choice \eqref{e:def:M} of $M$. This concludes the proof of (i).
\end{itemize}
\medskip

Next we prove (ii). We set for simplicity 
$$P:=\eta_-+\eta_0+Q$$
and we notice that $P$ is a polynomial of degree two (a linear combination of eigenfunctions corresponding to eigenvalues $\le 2d$). 
We claim that there is a dimensional constant $L$ such that $\|\nabla P\|_{L^\infty(\partial B_1)}\le L$. Indeed, reasoning as in the proof of (i), we have that there is a dimensional constant $C_d$ such that
 \begin{align*}
 \|\nabla_\theta \eta_-+\nabla_\theta \eta_0\|_{L^\infty(\partial B_1)}&\le \sum_{j\,:\,\alpha_j\le 2d} |c_j|\|\nabla_\theta \phi_j\|_{L^\infty(\partial B_1)}\le C_d\Big(\sum_{j\,:\,\alpha_j\le 2d} c_j^2\ \Big)^{\sfrac12}\\
 &= C_d\|\eta_-+\eta_0\|_{L^2(\partial B_1)}\le C_d\,\|c-Q\|_{L^2(\partial B_1)}\le C_d\,\delta\,.
 \end{align*} 
On the other hand, by the definition of $\mathcal S$ (see \eqref{e:critical_points}), $Q$ is of the form 
$$Q(x)=x\cdot Ax\quad\text{for a nonnegative symmetric matrix}\quad A\quad \text{with}\quad {\rm{tr}} A = \frac14.$$
Thus, for the gradient of $Q$, we have  
$$\|\nabla Q\|_{L^\infty}=\sup_{x\neq 0}\frac{2|Ax|}{|x|}\le 2\,\text{\rm tr}\, A=\frac12.$$
In particular, 
$$\|\nabla_\theta P\|_{L^\infty(\partial B_1)}\le C_d\delta+\frac12.$$
Now, since $Q$ is non-negative and $\eta_-+\eta_0$ is small, 
we have that the negative part $\inf\{P,0\}$ is also small. Precisely, 
$$M=\|\inf\{P,0\}\|_{L^\infty(\partial B_1)}\le  \|\eta_-+\eta_0\|_{L^\infty(\partial B_1)}\le C\delta.$$
Since the function $ \inf\{P,0\}:\partial B_1\to\R$ is $L$-Lipschitz and is small (in $L^\infty(\partial B_1)$), we can deduce that there is a dimensional constant $C_d>0$ such that
$$\|\inf\{P,0\}\|_{L^2(\partial B_1)}^2\ge C_d\, \frac{M^{d-1}}{L^{d-1}} M^2=C_d\, L^{-d+1}\|\inf\{P,0\}\|_{L^\infty(\partial B_1)}^{d+1}=C_d\, L^{-d+1}M^{d+1},$$
where the first inequality follows from Lemma \ref{l:revised_version}. Indeed, let $F:\R^{d-1}\to\R$ be the function $\inf\{P,0\}$ written in local coordiantes around the point, where its maximum is achieved on the sphere. In this local chart, we can use Lemma \ref{l:revised_version} to estimate the integral of $F^2$ over $B_R$, where $R:=\frac1L\|\inf\{P,0\}\|_{L^\infty}$.
On the other hand, the trace $c:\partial B_1\to\R$ is non-negative : 
$$c(\theta):=P(\theta)+\eta_+(\theta)\ge 0\qquad\text{for every}\quad  \theta\in\partial B_1.$$
Thus, necessarily,  $$-\inf\{P,0\}\le |\eta_+|\quad\text{on}\quad \partial B_1,$$ and so, 
$$C_d\, L^{-d+1}M^{d+1}\le \|\inf\{P,0\}\|_{L^2(\partial B_1)}^2\le \|\eta_+\|_{L^2(\partial B_1)}^2,$$
which (since $L$ is a dimensional constant) gives \eqref{e:estimate_M}.
\bigskip

Finally we prove (iii). 
We start with computing $\nabla \mathcal F(h_2)$.
First, we notice that, since $Q\in \mathcal S$, it is a solution to the PDE
\begin{equation}\label{e:equation_for_Q}
\nabla \mathcal F(Q)=-2\Delta_{\partial B_1}Q-4dQ+1=0.
\end{equation}
As a consequence of \eqref{e:equation_for_Q} and the definition of $\eta_0$, we have that both  $\eta_0$ and $\ds\Big(\frac1{4d}-Q\Big)$ are eigenfunctions of the spherical Laplacian, corresponding to the eigenvalue $2d$, that is, 
\begin{equation}\label{e:gru}
-\Delta_{\partial B_1}\eta_0-2d\eta_0=-\Delta_{\partial B_1}\Big(\frac1{4d}-Q\Big)-2d\Big(\frac1{4d}-Q\Big)=0.
\end{equation}
Thus,  we have
$$\nabla \mathcal F(h_2)= -2\Delta_{\partial B_1} h_2-4d h_2+1=-2\Delta_{\partial B_1} \eta_- -4d \eta_-\,.$$
Analogously, we compute $\nabla\mathcal F(h_\alpha)$. Indeed, we have
\begin{align*}
\nabla\mathcal F(h_\alpha)&=-2\Delta h_\alpha-4d \,h_\alpha\\
&=-2\Delta \left(\eta_+-8dM\Big(\frac1{4d}-Q\Big) \right) -4d \left(\eta_+-8dM\Big(\frac1{4d}-Q\Big) \right)\\
&=-2\Delta \eta_+ -4d \eta_+ 
\end{align*}
where in the last equality we used again  \eqref{e:gru}. 

In order to compute $h_\alpha\cdot\nabla \mathcal F(h_2+t\, h_\alpha)$, we first write $h_\alpha$ in the form 
$$h_\alpha=\eta_++\widetilde \eta_0\qquad\text{where}\qquad \widetilde \eta_0:=-8dM\Big(\frac1{4d}-Q\Big),$$
and we notice that by \eqref{e:gru}, $\widetilde \eta_0$ is a $(2d)$-eigenfunction of the spherical Laplacian. 
Using the definition of $\nabla\mathcal F$ \eqref{e:nabla_mathcalF} and the fact that $\nabla \mathcal F(Q)=0$ \eqref{e:equation_for_Q}, we compute 
\begin{align*}
h_\alpha\cdot\nabla \mathcal F(h_2+t\, h_\alpha)&=
\big(\eta_++\widetilde\eta_0\big)\cdot\nabla \mathcal F\big(Q+\eta_-+\eta_0-\widetilde\eta_0+t\, \eta_++t\,\widetilde\eta_0\big)\\
&=
\big(\eta_++\widetilde\eta_0\big)\cdot\nabla \mathcal F\big(\eta_-+\eta_0-\widetilde\eta_0+t\, \eta_++t\,\widetilde\eta_0\big)\\
&=
\big(\eta_++\widetilde\eta_0\big)\cdot\Big[\big(-2\Delta-4d\big)(\eta_-+\eta_0-\widetilde\eta_0+t\, h_\alpha+t\,\widetilde\eta_0\big)+1\Big].
\end{align*}
Now, since $\int_{\partial B_1}(\eta_++\widetilde \eta_0)=0$ and since both $\eta_0$ and $\widetilde \eta_0$ are $(2d)$-eigenfunctions, we get 
\begin{align*}
h_\alpha\cdot\nabla \mathcal F(h_2+t\, h_\alpha)
&=
\big(\eta_++\widetilde\eta_0\big)\cdot\big(-2\Delta-4d\big)(\eta_-+t\, \eta_+\big).
\end{align*}
Next, notice that by the definition of $\eta_-,\ \eta_+$ and $\widetilde \eta_0$, they are orthogonal in $L^2(\partial B_1)$ and $H^1(\partial B_1)$: 
$$\int_{\partial B_1}\eta_+(\theta)\widetilde \eta_0(\theta)\,d\theta=\int_{\partial B_1}\eta_-(\theta)\widetilde \eta_0(\theta)\,d\theta=\int_{\partial B_1}\eta_+(\theta) \eta_-(\theta)\,d\theta=0,$$
$$\int_{\partial B_1}\nabla_\theta\eta_+\cdot \nabla_\theta\widetilde \eta_0\,d\theta=\int_{\partial B_1}\nabla_\theta\eta_-\cdot \nabla_\theta\widetilde \eta_0\,d\theta=\int_{\partial B_1}\nabla_\theta\eta_+\cdot\nabla_\theta\eta_-\,d\theta=0.$$
Thus, integrating by parts on $\partial B_1$, we obtain 
\begin{align*}
h_\alpha\cdot\nabla \mathcal F(h_2+t\, h_\alpha)
	&=2t\int_{\partial B_1}\Big(|\nabla_\theta \eta_+|^2-2d\eta_+^2\Big)\,d\theta,
\end{align*}
as required. It remains to prove \eqref{e:new_loja_F}. Using \eqref{e:equation_for_Q}, we compute
\begin{align*}
\mathcal F(h_2+t\, h_\alpha)-\mathcal F(Q)&=\mathcal F\big(Q+\eta_-+\eta_0+t\eta_++(t-1)\widetilde \eta_0\big)-\mathcal F(Q)\\
&=\int_{\partial B_1}\big|\nabla_\theta \big(\eta_-+\eta_0+t\eta_++(t-1)\widetilde \eta_0\big)\big|^2\,d\theta\\
&\qquad\qquad -2d\int_{\partial B_1}\big(\eta_-+\eta_0+t\eta_++(t-1)\widetilde \eta_0\big)^2\,d\theta\\
&=\int_{\partial B_1}\Big(|\nabla_\theta \eta_-|^2-2d\eta_-^2\Big)\,d\theta+t^2\int_{\partial B_1}\Big(|\nabla_\theta \eta_+|^2-2d\eta_+^2\Big)\,d\theta,
\end{align*}
where the last equality follows from the orthogonality (in $L^2(\partial B_1)$ and $H^1(\partial B_1)$) of $\eta_+$, $\eta_-$ and $\eta_0+(t-1)\widetilde \eta_0$, and from the fact that $\eta_0+(t-1)\widetilde \eta_0$ is an eigenfunction of the spherical Laplacian corresponding precisely to the eigenvalue $2d$.
\end{proof}
%

\begin{lemma}\label{l:revised_version}
Suppose that $n\ge1$ and that $F:\R^n\to\R$	 is a function which is nonnegative a
and $L$-Lipschitz continuous for some constant $L>0$. Let $x_0\in\R^n$ and let $M:=F(x_0)>0$. 
Then,
\begin{equation}\label{e:estimate_with_constants}
\int_{B_R(x_0)}F^2(x)\,dx\ge \frac{2\omega_n}{(n+1)(n+2)}\frac{M^{n+2}}{L^n},
\end{equation}
 where $\omega_n$ is the volume of the unit ball in $\R^n$ and $R=\sfrac{M}{L}$.
\end{lemma}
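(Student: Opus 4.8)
The plan is to replace $F$ by the explicit barrier coming from the Lipschitz bound and then compute a one–dimensional integral. Since $F$ is $L$-Lipschitz, for every $x$ we have $F(x)\ge F(x_0)-L|x-x_0|=M-L|x-x_0|$. If $|x-x_0|<R=\sfrac ML$, the right-hand side is strictly positive, so on $B_R(x_0)$ the function $F$ is bounded below by the nonnegative quantity $M-L|x-x_0|$; in particular $F^2(x)\ge\big(M-L|x-x_0|\big)^2$ there. (Here it is convenient that $F\ge0$, so that we may square the lower bound without worrying about sign.) Hence
\begin{equation*}
\int_{B_R(x_0)}F^2(x)\,dx\ \ge\ \int_{B_R(x_0)}\big(M-L|x-x_0|\big)^2\,dx.
\end{equation*}

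Next I would pass to polar coordinates centered at $x_0$, writing the last integral as
\begin{equation*}
n\omega_n\int_0^R (M-Lr)^2\,r^{n-1}\,dr,
\end{equation*}
and substitute $r=Rs$ with $R=\sfrac ML$, so that $M-Lr=M(1-s)$ and $dr=R\,ds$. This turns the integral into
\begin{equation*}
n\omega_n\,M^2R^n\int_0^1 (1-s)^2\,s^{n-1}\,ds
= n\omega_n\,M^2R^n\cdot\frac{2}{n(n+1)(n+2)}
= \frac{2\omega_n}{(n+1)(n+2)}\,M^2R^n,
\end{equation*}
where the value $\int_0^1 s^{n-1}(1-s)^2\,ds=B(n,3)=\sfrac{2}{n(n+1)(n+2)}$ is the standard Beta integral (or two integrations by parts). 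Finally, inserting $R^n=\big(\sfrac ML\big)^n=\sfrac{M^n}{L^n}$ gives exactly the claimed bound $\sfrac{2\omega_n}{(n+1)(n+2)}\cdot\sfrac{M^{n+2}}{L^n}$.

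There is no real obstacle here: the content of the lemma is entirely in the first step, namely the observation that nonnegativity plus the Lipschitz estimate forces $F$ to dominate the cone $\big(M-L|x-x_0|\big)_+$, after which everything reduces to an explicit elementary computation. The only points requiring a modicum of care are that the cone stays nonnegative precisely on the ball of radius $R=\sfrac ML$ (which is why this is the right radius), and the correct normalization of the Beta integral; both are routine.
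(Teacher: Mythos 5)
Your proof is correct and takes essentially the same approach as the paper: both use the Lipschitz lower bound $F(x)\ge M-L|x-x_0|\ge 0$ on $B_R(x_0)$ and then evaluate the resulting radial integral, with you invoking the Beta integral $\int_0^1 s^{n-1}(1-s)^2\,ds=\frac{2}{n(n+1)(n+2)}$ where the paper expands $(M-Lr)^2$ and integrates term by term.
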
	
\begin{proof}
First notice that the $L$-Lipschit continuity of $F$ implies that 
$$F(x)\ge M-L|x-x_0|\ge 0\quad\text{for every}\quad x\in B_R(x_0).$$
Thus, integrating over $B_R(x_0)$, we get that 
\begin{align*}
\int_{B_R(x_0)}F^2(x)\,dx&\ge \int_{B_R(x_0)}\big(M-L|x-x_0|\big)^2\,dx.
\end{align*}	
Integrating the right-hand side in polar coordiantes and using the definition of $R$, we obtain
\begin{align*}
\int_{B_R(x_0)}F^2(x)\,dx&\ge n\omega_n\int_0^R\Big(M^2-2LMr+L^2r^2\Big)^2\,dr\\
&= n\omega_n\left(\frac1nR^nM^2-\frac{2}{n+1}LMR^{n+1}+\frac{1}{n+2}L^2R^{n+2}\right),
\end{align*}	
which is precisely \eqref{e:estimate_with_constants}.
\end{proof}	
\section{A direct proof of Theorem \ref{t:epi:sing}}\label{s:direct}

In this section we prove Theorem \ref{t:epi:sing} by giving the competitor explicitly, as in Subsection \ref{sub:direct}. 

\begin{proof}[Proof of Theorem \ref{t:epi:sing}]
We decompose the trace $c:\partial B_1\to\R$ as 
$$c=Q+\eta_++\eta_0+\eta_-\ ,$$
as in Subsection \ref{sub:decomposition} and we recall that the $2$-homogeneous extension $z$ is given by \eqref{e:decomposition_z}.
\medskip

\noindent{\it Definition of the competitor.}
We define the competitor $h:B_1\to\R$ as
\begin{equation}\label{e:competitor}
h(r,\theta):=r^2h_2(\theta)+r^\alpha h_\alpha(\theta),
\end{equation}
where $\alpha:=(2+\eps)> 2$, the functions $h_2$ and $h_\alpha$ are given by \eqref{e:h2halpha} 
as in Section \ref{s:best}.
\medskip

\noindent{\it Positivity of the competitor.} We first notice that the competitor $h$ defined in \eqref{e:competitor} is non-negative. Indeed, we can write the competitor $h$ as 
\begin{align*}
h(r,\theta)&=r^2h_2(\theta)+r^\alpha h_\alpha(\theta)=(r^2-r^\alpha)h_2(\theta)+r^\alpha c(\theta).
\end{align*}
Now, the first term $(r^2-r^\alpha)h_2(\theta)$ is non-negative by Lemma \ref{p:key} and the fact that $r\le 1$; the second term $r^\alpha c(\theta)$ is non-negative since the trace $c$ is non-negative by hypothesis. 

\medskip

\noindent{\it Decomposition of the energy.}
We first decompose the energy of $z$. We recall \eqref{e:decomposition_z} and we set
$$z_-(r,\theta):=r^2\eta_-(\theta),\qquad z_0(r,\theta):=r^2\eta_0(\theta)\quad\text{and}\quad z_+(r,\theta):=r^2\eta_+(\theta).$$
Since $\eta_-$, $\eta_0$ and $\eta_+$ are orthogonal, we have
$$W(z)-W(Q)=W_0(z-Q)=W_0(z_-)+W_0(z_0)+W_0(z_+).$$ 
We now estimate $W_0(z_-)$, $W_0(z_0)$ and $W_0(z_+)$. By \eqref{e:orto:energy} and \eqref{e:slicing_homo}, we have
\begin{align*}
W_0(z_0)=0\qquad\text{and}\qquad W_0(z_-)&=\sum_{j\,:\,\alpha_j<2}c_j^2W_0\big(r^2\phi_j(\theta)\big)=\frac1{d+2}\sum_{j\,:\,\alpha_j<2}c_j^2(\lambda_j-2d)\le 0.
\end{align*}
On the other hand, for the higher modes, we have 
\begin{align}
W_0(z_+)&=\sum_{j\,:\,\alpha_j>2}c_j^2W_0\big(r^2\phi_j(\theta)\big)=\frac1{d+2}\sum_{j\,:\,\alpha_j>2} c_j^2 (\lambda_j-2d)\notag\\
&\ge\frac1{3(d+2)}\sum_{j\,:\,\alpha_j>2} c_j^2 (\lambda_j+1)=\frac{1}{3(d+2)}\|\eta_+\|_{H^1(\partial B_1)}^2\ge 0\label{e:z+energy},
\end{align}
where we used that, if $\alpha_j>2$, then $\alpha_j\ge 3$ and $\lambda_j\ge 3(d+1)$. 

We now study $W(h)$, where $h$ is the competitor from \eqref{e:competitor}. On the other hand, setting 
$$\ds Q_0:=8d\Big(\frac1{4d}-Q\Big),$$ 
we get 
\begin{align*}
W(h)-W(Q)=W_0(h-Q)&=W_0(z_-)+W_0\big(z_0+(r^2-r^\alpha)MQ_0\big)+W_0\big(r^\alpha \eta_+(\theta)\big)\\
&=W_0(z_-)+W_0(z_0)+M^2W_0\big((r^2-r^\alpha)Q_0\big)+W_0\big(r^\alpha \eta_+(\theta)\big),
\end{align*}
where in the last equality we used that $z_0$ is harmonic in $B_1$ and $(r^2-r^\alpha)Q_0(\theta)$ vanishes on $\partial B_1$. Using the fact that $Q_0$ is a $2d$-eigenfunction and $\|Q_0\|_{L^2(\partial B_1)}\le C_d$, we calculate
\begin{align*}
W_0\big((r^2-r^\alpha)Q_0\big)&=\int_0^1 r^{d-1}\,dr\int_{\partial B_1}\Big((2r-\alpha r^{\alpha-1})^2Q_0^2+(r- r^{\alpha-1})^2|\nabla_\theta Q_0|^2\Big)\,d\theta\\
&=\|Q_0\|_{L^2(\partial B_1)}^2\int_0^1 r^{d+1}\Big(\big(2-\alpha r^{\alpha-2}\big)^2+2d\big(1- r^{\alpha-2}\big)^2\Big)\,dr\\
&=\|Q_0\|_{L^2(\partial B_1)}^2\frac{(\alpha-2)^2}{d+2\alpha-2}\le C_d(\alpha-2)^2 =C_d\, \eps^2.
\end{align*}
Putting together this estimate, \eqref{e:slicing_homo} and \eqref{e:z+energy}, we get 
\begin{align}
W_0(h-Q)-W_0(z-Q)&=W_0\big(r^\alpha \eta_+(\theta)\big)-W_0\big(r^2\eta_+(\theta)\big)+C_dM^2\eps^2\notag\\
&\le -\frac{\eps}{d+2}W_0(z_+)+\eps^2\|\eta_+\|_{L^2(\partial B_1)}^2+C_dM^2\eps^2
.\label{e:epi:main_est}
\end{align}

\noindent{\it Conclusion of the proof.}
We are finally in position to prove \eqref{e:sing:epi}. We first notice that by \eqref{e:epi:main_est} and  Lemma \ref{p:key} (ii), we have 
$$W_0(h-Q)-W_0(z-Q)\le -\frac{\eps}{d+2}W_0(z_+)+\eps^2\|\eta_+\|_{L^2(\partial B_1)}^2+C_d\|\eta_+\|_{L^2(\partial B_1)}^{\frac{2}{d+1}}\eps^2.$$
Recall that $\|\eta_+\|_{L^2(\partial B_1)}\le \delta$. Choosing $\delta\le 1$, we have 
\begin{align*}
W_0(h-Q)-W_0(z-Q)&\le -\frac{\eps}{d+2}W_0(z_+)+2C_d\|\eta_+\|_{L^2(\partial B_1)}^{\frac{2}{d+1}}\eps^2\\
&\le -\frac{\eps}{d+2}W_0(z_+)+C_dW_0(z_+)^{\frac{2}{d+1}}\eps^2,
\end{align*}
where in the last inequality we used \eqref{e:z+energy}. Finally, setting $\eps=C_dW_0(z_+)^{\frac{d-1}{d+1}}$, for some dimensional constant $C_d$, and using the inequality $W_0(z-Q)\le W_+(z_+)$, we get 
$$W_0(h-Q)-W_0(z-Q)\le -C_d \big(W_0(z-Q)\big)^{\frac{2d}{d+1}},$$
which is precisely \eqref{e:sing:epi} (see Lemma \ref{l:WW0}).
\end{proof}

\section{Constructive approach. A second proof of Theorem \ref{t:epi:sing}}\label{s:constructive}
Let $c$ be given by Theorem \ref{t:epi:sing}. We will use the general construction from Theorem \ref{p:epik}. In our case the homogeneity $\alpha$ is $2$, the functional $\mathcal G$ is the Weiss' boundary adjusted energy $W$, $\mathcal F$ is given by \eqref{e:mathcalF}, and the set $\mathcal S$ is \eqref{e:critical_points}. Thus, it only remains to define a flow $\psi$ that satisfies the energy dissipation inequality \eqref{e:energy_dissipation_condition} and the \L ojasiewicz inequality \eqref{e:lojaK}.

\subsection{Choice of the flow} We write the trace $c:\partial B_1\to\R$ as
$$c(\theta)=h_2(\theta)+h_\alpha(\theta),$$
where $h_2$ and $h_\alpha$ are given by \eqref{e:h2halpha}. We define the flow $\psi$ as
$$\psi(t)=h_2+e^{-t}h_\alpha\qquad\text{for every}\qquad t\ge 0.$$
Below we verify that $\psi$ satisfies the hypotheses of Proposition \ref{p:epik}. 

\subsection{\L ojasiewicz inequality} We first prove that the \L ojasiewicz inequality \eqref{e:lojaK} holds along the flow. Indeed, by \eqref{e:new_loja}, we have 
\begin{align*}
-\psi'(t)\cdot\nabla \mathcal F(\psi(t))&=e^{-t}h_\alpha\cdot\nabla \mathcal F(h_2+e^{-t}h_\alpha)=2e^{-2t}\int_{\partial B_1}\Big(|\nabla_\theta \eta_+|^2-2d\,\eta_+^2\Big)\,d\theta\,.
\end{align*}
On the other hand, \eqref{e:new_loja_F} implies that 
\begin{equation*}
\mathcal F(h_2+e^{-t}h_\alpha) -\mathcal F(\mathcal S)=\int_{\partial B_1}\Big(|\nabla_\theta \eta_-|^2-2d\,\eta_-^2\Big)\,d\theta+e^{-2t}\int_{\partial B_1}\Big(|\nabla_\theta \eta_+|^2-2d\,\eta_+^2\Big)\,d\theta\,.
\end{equation*}	
Finally, since 
$$\int_{\partial B_1}\Big(|\nabla_\theta \eta_-|^2-2d\,\eta_-^2\Big)\,d\theta\le 0,$$
we obtain the \L ojasiewicz inequality \eqref{e:lojaK} with constant $C_{\text{\sc ls}}=1$ and exponent $\beta=0$.

\subsection{Energy dissipation} In order to prove that \eqref{e:energy_dissipation_condition} holds, we compute 
\begin{align*}
\|\psi'(t)\|_{L^2(\partial B_1)}^2&=e^{-2t}\|h_\alpha\|_{L^2(\partial B_1)}^2=e^{-2t}\left(\|\eta_+\|_{L^2(\partial B_1)}^2+(8dM)^2\left\|\left(\frac1{4d}-Q\right)\right\|_{L^2(\partial B_1)}^2\right),
\end{align*}
where we used the fact that $\frac1{4d}-Q$ is an eigenfunction of the Spherical Laplacian corresponding to the eigenvalue $2d$ and so it is orthogonal to $\eta_+$ in $L^2(\partial B_2)$. In conclusion, since $Q\in\mathcal S$ and all the functions in $\mathcal S$ are bounded, we get that there is a dimensional constant $C_d$ such that  
\begin{align*}
\|\psi'(t)\|_{L^2(\partial B_1)}^2&\le e^{-2t}\left(\|\eta_+\|_{L^2(\partial B_1)}^2+C_d M^2\right)\\
&\le e^{-2t}\left(\|\eta_+\|_{L^2(\partial B_1)}^2+C_d \|\eta_+\|_{L^2(\partial B_1)}^{\frac{4}{d+1}}\right),
\end{align*}
where the second inequality follows from \eqref{e:estimate_M}. Now, since the Fourier decomposition of $\eta_+$ contains only eigenfunctions corresponding to eigenvalues $\lambda_j\ge 3(d+1)>2d$, we get  
$$\|\eta_+\|_{L^2(\partial B_1)}^2\le \int_{\partial B_1}\Big(|\nabla_\theta \eta_+|^2-2d\,\eta_+^2\Big)\,d\theta.$$
Thus, we consider the following two cases :
\begin{itemize}
\item If $\|\eta_+\|_{L^2(\partial B_1)}\ge 1$, then  	
\begin{align}
\|\psi'(t)\|_{L^2(\partial B_1)}^2&\le e^{-2t}\left(\|\eta_+\|_{L^2(\partial B_1)}^2+C_d \|\eta_+\|_{L^2(\partial B_1)}^{\frac{4}{d+1}}\right)\notag\\
&\le e^{-2t}(1+C_d)\|\eta_+\|_{L^2(\partial B_1)}^2\notag\\
&\le  e^{-2t}(1+C_d)\int_{\partial B_1}\Big(|\nabla_\theta \eta_+|^2-2d\,\eta_+^2\Big)\,d\theta\notag\\
&\le \frac{1+C_d}2\big(-\psi'(t)\cdot\nabla \mathcal F(\psi(t))\big).\label{e:constructive_conclusion_1}
\end{align}
\item Conversely, if $\|\eta_+\|_{L^2(\partial B_1)}\le 1$, then 
\begin{align}
\|\psi'(t)\|_{L^2(\partial B_1)}^2&\le e^{-2t}\left(\|\eta_+\|_{L^2(\partial B_1)}^2+C_d \|\eta_+\|_{L^2(\partial B_1)}^{\frac{4}{d+1}}\right)\notag\\
&\le e^{-2t}(1+C_d)\left(\|\eta_+\|_{L^2(\partial B_1)}^2\right)^{\frac{2}{d+1}}\notag\\
&\le (1+C_d)\left(e^{-2t}\int_{\partial B_1}\Big(|\nabla_\theta \eta_+|^2-2d\,\eta_+^2\Big)\,d\theta\right)^{\frac{2}{d+1}}\notag\\
&\le \frac{1+C_d}{2^{{\frac{2}{d+1}}}}\big(-\psi'(t)\cdot\nabla \mathcal F(\psi(t))\big)^{\frac{2}{d+1}}.\label{e:constructive_conclusion_2}
\end{align}
\end{itemize}
Combining \eqref{e:constructive_conclusion_1} and \eqref{e:constructive_conclusion_2}, we obtain  \eqref{e:energy_dissipation_condition} with a dimensional constant $C_{\text{\sc ed}}$ and an exponent $p=d+1$. This concludes our second proof of Theorem \ref{t:epi:sing}.


\section{Constructive approach via gradient flow. A third proof of Theorem \ref{t:epi:sing}}\label{s:third_proof}
In this section we review the proof of the log-epiperimetric inequality from \cite{cospve3} in terms of Theorem \ref{p:epik}.
Let $c:\partial B_1\to\R$ be as Theorem \ref{t:epi:sing}. As in the previous section, we will apply Theorem \ref{p:epik} with $\alpha=2$, $\mathcal G=W$, $\mathcal F$ as in \eqref{e:mathcalF}, and $\mathcal S$ given by \eqref{e:critical_points}. 
\medskip

\subsection{Definition of the flow} 
As in \cite{cospve3}, we define 
$$\psi\in H^1\big(]0,+\infty[\,;L^2(B_1)\big) \cap L^2\big(]0,+\infty[\,; H^2(B_1)\cap\mathcal K\big)$$
to be the strong solution of the following parabolic variational inequality (for the existence we refer to \cite{brezis})
\begin{equation}\label{e:intro:para}
\begin{cases}
\big(\psi'(t)+\nabla \mathcal F (\psi(t))\big)\cdot (v-\psi(t))\ge0\,,\quad\text{for every}\quad v\in \mathcal K\quad\text{and}\quad t>0\,,\\
\psi(0)=c\,,
\end{cases} 
\end{equation}
where $\mathcal K$ is the convex set 
$$\mathcal K=\big\{v\in L^2(\partial B_1)\ :\ v\ge 0\quad\text{on}\quad \partial B_1\big\}.$$

\subsection{Energy dissipation inequality} The energy dissipation inequality \eqref{e:energy_dissipation_condition} is automaticaly satisfied along the flow with $p=2$. Precisely, we have
\begin{equation}\label{e:para:ost:u'}
\ds \|\psi'(t)\|_{L^2(\partial B_1)}^2= - \psi'(t)\cdot \nabla \mathcal F(\psi(t))\quad\text{for almost every}\quad t>0.
\end{equation}
Indeed, by taking the test function $\psi:=u(t+h)$, for some $t>0$ and $h\in\R$, we get 
 \begin{align*}
 0\le \big(\psi(t+h)-\psi(t)\big)\cdot \big(\psi'(t)+\nabla \mathcal F(\psi(t))\big),
 \end{align*}
 Dividing by $h$ and taking the limits as $h\to 0^+$ and $h\to0^-$, we obtain the inequalities
 \begin{align*}
 0&\le \lim_{h\to0^+} \frac1h\big(\psi(t+h)-\psi(t)\big)\cdot \big(\psi'(t)+\nabla \mathcal F(\psi(t))\big)=\|\psi'(t)\|_{L^2(\partial B_1)}^2+\psi'(t)\cdot\nabla \mathcal F(\psi(t)),\\
 0&\ge \lim_{h\to0^-} \frac1h\big(\psi(t+h)-\psi(t)\big)\cdot \big(\psi'(t)+\nabla \mathcal F(\psi(t))\big)=\|\psi'(t)\|_{L^2(\partial B_1)}^2+\psi'(t)\cdot\nabla \mathcal F(\psi(t)),
 \end{align*}
 which give precisely \eqref{e:para:ost:u'}.
 
 \subsection{\L ojasiewicz inequality} 
 Now, in order to conclude the proof of the log-epiperimetric inequality (Theorem \ref{t:epi:sing}), it is sufficient to check that \eqref{e:lojaK} holds along the flow. We fix $t>0$ and we reason precisely as in \cite{cospve3}. We decompose the function $\psi(t)$ as 
 $$\psi(t)=Q+\eta_++\eta_0+\eta_-\ ,$$
 exactly as in \eqref{e:decomposition_of_c} with $\psi(t)$ in place of $c$; moreover, we define $h_2$ and $h_\alpha$ as in \eqref{e:h2halpha}, so we have 
 $$\psi(t)=h_2+h_\alpha\ .$$
 Now, by Lemma \ref{p:key}, we have that $h_2\in \mathcal K$. Thus, using \eqref{e:intro:para}, we can compute 
 $$\|\psi(t)\|_{L^2(\partial B_1)}\ge \frac{(h_2-\psi(t))\cdot\psi'(t)}{\|h_2-\psi(t)\|_{L^2(\partial B_1)}}\ge \frac{-(h_2-\psi(t))\cdot\nabla \mathcal F\big(\psi(t)\big)}{\|h_2-\psi(t)\|_{L^2(\partial B_1)}},$$
 in order to estimate the right-hand side from below, we use Lemma \ref{p:key}. 
 \begin{align*}
 \frac{-(h_2-\psi(t))\cdot\nabla \mathcal F\big(\psi(t)\big)}{\|h_2-\psi(t)\|_{L^2(\partial B_1)}}&= \frac{-h_\alpha\cdot\nabla \mathcal F(h_2+h_\alpha)}{\|h_\alpha\|_{L^2(\partial B_1)}}=\frac{2}{\|h_\alpha\|_{L^2(\partial B_1)}}\int_{\partial B_1}\Big(|\nabla_\theta \eta_+|^2-2d\,\eta_+^2\Big)\,d\theta\\
 &\ge 2\left(\|\eta_+\|_{L^2(\partial B_1)}^2+C_dM^2\right)^{-\sfrac12}\int_{\partial B_1}\Big(|\nabla_\theta \eta_+|^2-2d\,\eta_+^2\Big)\,d\theta\\
  &\ge 2\left(\|\eta_+\|_{L^2(\partial B_1)}^2+C_d\|\eta_+\|_{L^2(\partial B_1)}^{\frac{4}{d+1}}\right)^{-\sfrac12}\int_{\partial B_1}\Big(|\nabla_\theta \eta_+|^2-2d\,\eta_+^2\Big)\,d\theta\\
    &\ge C_d\|\eta_+\|_{L^2(\partial B_1)}^{-\frac{2}{d+1}}\int_{\partial B_1}\Big(|\nabla_\theta \eta_+|^2-2d\,\eta_+^2\Big)\,d\theta\ ,
 \end{align*}
 where in the last inequality we used that
 $$\|\eta_+\|_{L^2(\partial B_1)}\le \text{dist}_{2}\big(\psi(t),\mathcal S\big)\le 1,$$
 which holds for every $t\in[0,T_{\text{max}}]$, by choosing $T_{\text{max}}$ small enough and $\psi(0)$ close enough to $\mathcal S$, as in Lemma \ref{l:continuity} below.
As a consequence, we get that 
 \begin{align*}
 -\psi'(t)\cdot\nabla\mathcal F(\psi(t))=\|\psi'(t)\|_{L^2(\partial B_1)}^2&\ge C_d\|\eta_+\|_{L^2(\partial B_1)}^{-\frac{4}{d+1}}\left(\int_{\partial B_1}\Big(|\nabla_\theta \eta_+|^2-2d\,\eta_+^2\Big)\,d\theta\right)^2\\
 &\ge C_d\left(\int_{\partial B_1}\Big(|\nabla_\theta \eta_+|^2-2d\,\eta_+^2\Big)\,d\theta\right)^{\frac{2d}{d+1}}\\
  &\ge C_d\big(\mathcal F(\psi(t))-\mathcal F(\mathcal S)\big)^{\frac{2d}{d+1}},
 \end{align*}
where in the last inequality we used again Lemma \ref{p:key}. Finally, this implies \eqref{e:lojaK} with $\ds\gamma =\frac{d-1}{d+1}$. This concludes our third proof of Theorem \ref{t:epi:sing}.\qed
\medskip

We notice that the estimates from Lemma \ref{p:key} were crucial in the three proofs (section \ref{s:direct}, \ref{s:constructive} and \ref{s:third_proof}). In the first two proofs it was immediate to notice that the trace satisfies the hypotheses of Lemma \ref{p:key}. In the case of the proof that we presented in this section, we can apply Lemma \ref{p:key} because the flow $\psi$ remains close to the critical set $\mathcal S$. This follows by a standard argument that we sketch in the lemma below. 
\begin{lemma}\label{l:continuity}
For every $\eps>0$, there are constants $\delta>0$ and $T>0$ such that the following holds. If $\psi$ is a solution to \eqref{e:intro:para} and is such that 
$\text{dist}_{2}\big(\psi(0),\mathcal S\big)<\delta,$
then 	
$$\text{dist}_{2}\big(\psi(t),\mathcal S\big)<\eps\qquad\text{for every}\qquad t\in[0,T].$$
\end{lemma}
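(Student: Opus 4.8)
\textbf{Proof proposal for Lemma \ref{l:continuity}.} The plan is to run a standard continuity/continuation argument, exploiting the fact that the flow $\psi$ is energy-decreasing and that the energy $\mathcal F$ controls the distance to $\mathcal S$ near $\mathcal S$. First I would record the two basic facts about the parabolic variational inequality \eqref{e:intro:para}: the map $t\mapsto\psi(t)$ is continuous in $L^2(\partial B_1)$ (indeed $\psi\in H^1((0,+\infty);L^2(\partial B_1))$, so it is absolutely continuous with values in $L^2$), and along the flow the energy is nonincreasing, because by \eqref{e:para:ost:u'} we have $\frac{d}{dt}\mathcal F(\psi(t))=\psi'(t)\cdot\nabla\mathcal F(\psi(t))=-\|\psi'(t)\|_{L^2(\partial B_1)}^2\le 0$ for a.e.\ $t$. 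Hence $\mathcal F(\psi(t))\le\mathcal F(\psi(0))$ for all $t\ge 0$.

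Next I would quantify the relation between $\mathcal F$ and $\dist_2(\cdot,\mathcal S)$ on a small $L^2$-neighborhood of $\mathcal S$. Writing, for a function $v$ close to $\mathcal S$, the decomposition $v=Q+\eta_++\eta_0+\eta_-$ as in \eqref{e:decomposition_of_c} with $Q$ the $L^2$-projection onto $\mathcal S$, Lemma \ref{p:key}(iii) (with $t=1$, so that $h_2+h_\alpha=v$ and the $\widetilde\eta_0$ terms cancel) together with the spectral gaps $\lambda_j\le 2d$ for $\alpha_j<2$ and $\lambda_j\ge 3(d+1)$ for $\alpha_j>2$ gives
\begin{equation*}
\mathcal F(v)-\mathcal F(\mathcal S)=\int_{\partial B_1}\!\Big(|\nabla_\theta\eta_-|^2-2d\,\eta_-^2\Big)d\theta+\int_{\partial B_1}\!\Big(|\nabla_\theta\eta_+|^2-2d\,\eta_+^2\Big)d\theta\ \ge\ -2d\|\eta_-\|_{L^2}^2+\|\eta_+\|_{L^2}^2 .
\end{equation*}
This is only a one-sided bound, so to control $\dist_2(v,\mathcal S)^2=\|\eta_-\|_{L^2}^2+\|\eta_0\|_{L^2}^2+\|\eta_+\|_{L^2}^2$ I also need to bound the negative modes $\eta_-$ and the neutral modes $\eta_0$. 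The $\eta_-$ part is handled as in \cite{cospve3}: the only eigenfunctions with $\alpha_j<2$ are the constant and the linear functions, and using that $v=\psi(t)\ge0$ on $\partial B_1$ while $Q\ge0$ and $\eta_+,\eta_0$ are small, one gets $\|\eta_-\|_{L^2}\le C_d\,(\|\eta_0\|_{L^2}+\|\eta_+\|_{L^2})$ plus a harmless error (this is exactly the type of estimate in the proof of Lemma \ref{p:key}(ii)). For $\eta_0$ I would use that $Q$ is the \emph{minimizing} element of $\mathcal S$: since $\mathcal S$ is a smooth manifold and the $2d$-eigenspace $E_{2d}$ is its tangent-like directions, first-order optimality of $Q$ forces the component of $\eta_0$ tangent to $\mathcal S$ to vanish, and the transverse (trace-part) component of $\eta_0$ is controlled quadratically by $\|v-Q\|_{L^2}^2$; alternatively one simply absorbs $\|\eta_0\|$ by redefining $Q$ to include it, as in the remark after \eqref{e:decomposition_z}, so that effectively $\eta_0=0$. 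Combining these, there is $\delta_0>0$ and $C_d$ so that $\dist_2(v,\mathcal S)\le\delta_0$ implies $\dist_2(v,\mathcal S)^2\le C_d\big(\mathcal F(v)-\mathcal F(\mathcal S)\big)+C_d\,\dist_2(v,\mathcal S)^{1+\sfrac1d}$, and hence, shrinking $\delta_0$, $\dist_2(v,\mathcal S)^2\le 2C_d\big(\mathcal F(v)-\mathcal F(\mathcal S)\big)$.

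Finally I would close the continuation argument. Given $\eps>0$, pick $\eps_1\le\min\{\eps,\delta_0\}$ and choose $\delta>0$ small enough that $\dist_2(\psi(0),\mathcal S)<\delta$ forces both $\dist_2(\psi(0),\mathcal S)<\eps_1/2$ and $\mathcal F(\psi(0))-\mathcal F(\mathcal S)<\frac{1}{2C_d}(\eps_1/2)^2$ — possible by continuity of $W$ (equivalently $\mathcal F$) in $H^1$ and the fact that on $\mathcal S$ the functional is constant; since here $\psi(0)=c$ has a fixed $H^1$ bound by hypothesis, the $L^2$-smallness suffices. Let $T^\ast:=\sup\{T>0:\dist_2(\psi(t),\mathcal S)<\eps_1\ \forall t\in[0,T]\}$. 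On $[0,T^\ast)$ the bound $\dist_2(\psi(t),\mathcal S)\le\delta_0$ holds, so the estimate of the previous paragraph applies and, using the energy monotonicity $\mathcal F(\psi(t))\le\mathcal F(\psi(0))$, we get $\dist_2(\psi(t),\mathcal S)^2\le 2C_d\big(\mathcal F(\psi(0))-\mathcal F(\mathcal S)\big)<(\eps_1/2)^2$ for all $t\in[0,T^\ast)$. By $L^2$-continuity of $t\mapsto\psi(t)$ this bound persists at $t=T^\ast$ and strictly, so if $T^\ast$ were finite it could be increased, a contradiction; hence $T^\ast=+\infty$, and in particular $\dist_2(\psi(t),\mathcal S)<\eps_1\le\eps$ for all $t\in[0,T]$ for any $T$. (If one only wants the statement as written, with a finite $T$, one can stop here; the argument in fact yields it for all $T>0$.)

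\textbf{Main obstacle.} The delicate point is not the soft continuation step but the quantitative lower bound on $\mathcal F(v)-\mathcal F(\mathcal S)$ in terms of $\dist_2(v,\mathcal S)^2$: because the $\eta_-$-modes enter $\mathcal F$ with the \emph{wrong} (negative) sign, one cannot get this bound from the spectral decomposition alone and must crucially use the constraint $v=\psi(t)\ge0$ together with the nonnegativity of $Q$ to bound $\|\eta_-\|_{L^2}$ by the positive-mode quantities — exactly the mechanism behind Lemma \ref{p:key}(ii). Making sure this argument is uniform (the constants $C_d$, $\delta_0$ dimensional and independent of $t$) is where the care is needed; everything else is a routine Gronwall/continuation packaging.
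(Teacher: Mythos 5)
Your approach is genuinely different from the paper's, but it has a gap at its central step that makes it incorrect as written. The paper's proof is a one-line Gronwall estimate: fix $Q\in\mathcal S$ to be the $L^2$-projection of $\psi(0)$, test the variational inequality \eqref{e:intro:para} with $v=Q\in\mathcal K$, use $\nabla\mathcal F(Q)=0$ and the affine form of $\nabla\mathcal F$ to get
\[
\frac{\partial}{\partial t}\|\psi(t)-Q\|_{L^2(\partial B_1)}^2\le 2(Q-\psi(t))\cdot\nabla\mathcal F(\psi(t))\le 8d\,\|\psi(t)-Q\|_{L^2(\partial B_1)}^2 + C_d,
\]
and conclude by Gronwall that $\|\psi(t)-Q\|_{L^2}^2$ stays small for $t\in[0,T]$ with $T$ small. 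No energy monotonicity, no \L ojasiewicz bound, no continuation argument is needed, and the conclusion is only on a finite time window (consistent with the exponential factor).

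The gap in your argument is the step where you claim that near $\mathcal S$ one has $\dist_2(v,\mathcal S)^2\le 2C_d\big(\mathcal F(v)-\mathcal F(\mathcal S)\big)$. You derive the intermediate bound $\dist_2^2\le C_d(\mathcal F-\mathcal F(\mathcal S))+C_d\,\dist_2^{1+\sfrac1d}$ and then say you can absorb the last term by shrinking $\delta_0$. This absorption fails: since $1+\sfrac1d<2$, the term $\dist_2^{1+\sfrac1d}$ \emph{dominates} $\dist_2^2$ for small $\dist_2$, so shrinking $\delta_0$ makes the offending term relatively larger, not smaller, and your inequality collapses to a triviality. Beyond the algebra, the desired bound itself is suspect: at a boundary point $Q$ of $\mathcal S$ (where the matrix $A$ has a zero eigenvalue), directions in the $2d$-eigenspace $E_{2d}$ that exit $\mathcal S$ contribute nothing to $\mathcal F-\mathcal F(\mathcal S)$ yet contribute linearly to the $L^2$-distance, and your proposed fixes (redefining $Q$ to absorb $\eta_0$, or "first-order optimality of $Q$") implicitly treat $\mathcal S$ as a smooth boundaryless manifold, which it is not — $\mathcal S$ is the compact convex set $\{A\ge0,\ \mathrm{tr}\,A=\sfrac14\}$ with corners. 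Finally, your argument, if it worked, would give the conclusion for \emph{all} $T>0$, which should already be a warning sign: the paper only claims a finite $T$, and it is not obvious that the stronger statement holds. The continuation/energy-monotonicity framework you set up is sensible in spirit (and close to what is done in \cite{cospve3} for the full log-epiperimetric inequality), but for the soft statement of Lemma \ref{l:continuity} the direct Gronwall estimate on $\|\psi(t)-Q\|_{L^2}$ is both shorter and avoids the quantitative \L ojasiewicz-type estimate entirely.
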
	
\begin{proof}
Let $Q\in \mathcal S$ be the projection of $\psi(0)$ on $\mathcal S$, with respect to the distance $L^2(\partial B_1)$. By definition $Q$ is a critical point for $\mathcal F$ and $Q\ge 0$. Thus, using \eqref{e:intro:para}, we get 
\begin{align*}
\frac{\partial}{\partial t}\|\psi(t)-Q\|_{L^2(\partial B_1)}^2&=-2\psi'(t)\cdot\big(Q-\psi(t)\big)\le 2\big(Q-\psi(t)\big)\cdot \nabla\mathcal F(\psi(t))\\
&=-2\big(Q-\psi(t)\big)\cdot \nabla\mathcal F\big(Q-\psi(t)\big)\\
&\le 8d\int_{\partial B_1}\big(Q-\psi(t)\big)^2\,d\theta-2\int_{\partial B_1}\big(Q-\psi(t)\big)\,d\theta\\
&\le (8d+1)\int_{\partial B_1}\big(Q-\psi(t)\big)^2\,d\theta+\mathcal H^{d-1}(\partial B_1).
\end{align*} 
Now, setting $a=8d+1$ and $b=\mathcal H^{d-1}(\partial B_1)$ and applying the Gronwall inequality, we get that
$$\|\psi(t)-Q\|_{L^2(\partial B_1)}^2\le \frac{b}{a}\big(e^{at}-1\big)+e^{at}\|\psi(0)-Q\|_{L^2(\partial B_1)}^2,$$
which gives the claim.
\end{proof}

\section{On the construction of the competitor - an outline of the main ideas}\label{s:survey}
In this section we sketch the main ideas behind the proof of the log-epiperimetric inequality for the obstacle problem (Theorem \ref{t:epi:sing}) and that led us to the two constructions from \cite{cospve1}, \cite{cospve2}, \cite{cospve3}, Section \ref{s:direct} and Section \ref{s:constructive}. 
\medskip

\noindent {\bf The log-epiperimetric inequality.} We recall that given a $2$-homogeneous function $z:B_1\to\R$, in polar coordinates $z(r,\theta)=r^2c(\theta)$, our aim is to construct a competitor $h:B_1\to\R$ such that $h=z$ on $\partial B_1$ and 
\begin{equation}\label{e:log-epi-o}
W(h)-W(\mathcal S)\le W(z)-W(\mathcal S)-\eps\big|W(z)-W(\mathcal S)\big|^{1+\gamma},
\end{equation}
which can also be written as 
\begin{equation}\label{e:log-epi-oo}
W(h)-W(z)\le -\eps\big|W(z)-W(\mathcal S)\big|^{1+\gamma},
\end{equation}
where $\eps>0$, $\gamma\in[0,1)$, $\mathcal S$ is the set of singular $2$-homogeneous solutions to the obstacle problem and where we use the notation (recall that $W$ is constant on $\mathcal S$) :
$$W(\mathcal S):=W(Q)\quad\text{for every}\quad Q\in\mathcal S.$$

\subsection{The direct approach in a nutshell}\label{sub:direct} In this subsection we present the main ideas that led to the construction of the competitors in \cite{cospve1} and in Section \ref{s:direct}, and also in \cite{cospve2}, the latter in the context of the thin-obstacle problem.

\subsubsection{\bf The energy of $z$.}
We notice that if the function $h$ is such that the log-epiperimetric inequality \eqref{e:log-epi-o} holds, then it must have a lower energy than the $2$-homogeneous function $z$, so we start by analyzing the energy $W(z)$. We decompose the trace $c$ as
$$c=Q+\eta_-+\eta_0+\eta_+,$$
where $Q\in\mathcal S$, $\eta_0$ contains only lower modes, $\eta_0$ is a $(2d)$-eigenfunction of the spherical Laplacian and the Fourier expansion of $\eta_+$ contains only eigenfunctions corresponding to eigenvalues higher than $2d$. Then, we recall that 
$$W(z)-W(Q)=W_0\big(r^2\eta_-(\theta)\big)+W_0\big(r^2\eta_0(\theta)\big)+W_0\big(r^2\eta_+(\theta)\big).$$
We next examine the different terms in the right-hand side of the above identity. By Lemma \eqref{l:slicing}, we have :  
\begin{itemize}
	\item $\eta_-$ gives a negative contribution to the energy : 
	$$W_0\big(r^2\eta_-(\theta)\big)\le 0,\quad\text{the inequality being strict if}\quad  \eta_-\neq0\ ;$$ 	
	\item the energy of $\eta_0$ is zero: $W_0\big(r^2\eta_0(\theta)\big)=0$ ;  
	\item the energy of $\eta_+$ is positive: 
	$$W_0\big(r^2\eta_+(\theta)\big)\ge 0,\quad\text{the inequality being strict if}\quad  \eta_+\neq0.$$ 	
\end{itemize}	
In particular, this means that, in order to build a competitor $h$ with lower energy than $z$, we have to act on the term containing the higher modes 
$$W_0\big(r^2\eta_+(\theta)\big)=\int_{B_1}|\nabla (r^2\eta_+(\theta))|\,dx-2\int_{\partial B_1}\eta_+^2\,d\theta.$$
\subsubsection{\bf First attempt - the harmonic extension of the higher modes.} Since we are looking for a competitor that coincides with $z$ on $\partial B_1$, we cannot expect a contribution from the second (boundary) term of $W_0\big(r^2\eta_+(\theta)\big)$. 
Thus, in order to decrease the energy, one has to act on the first term, which is the Dirichlet energy of $r^2\eta_+(\theta)$. Of course, the best way to decrease the Dirichlet energy is to replace $r^2\eta_+$ by the harmonic extension of $\eta_+$ in $B_1$. Since the harmonic extension can be explicitly written in Fourier series, we get that the competitor has the form
\begin{equation}\label{e:fake}
f(r,\theta)=Q(r\theta)+r^2\eta_-(\theta)+r^2\eta_0(\theta)+\sum_{j:\alpha_j>2} c_jr^{2+\eps_j}\phi_j(\theta),
\end{equation}
where the coefficients $c_j$ are given by \eqref{e:coef_fou} and $\alpha_j$ are the corresponding homogeneities, related too the eigenvalues $\lambda_j$ of $\phi_j$ through the formula \eqref{e:alpha_j}. We also notice that $\eps_j>0$, for every $j$. In fact, since we take
$$\ds \sum_{j:\alpha_j>2} c_jr^{2+\eps_j}\phi_j(\theta)$$ 
to be precisely the harmonic extension of $\eta_+$, we have that $\eps_j=\alpha_j-2\ge 1$.

\subsubsection{\bf The energy of the competitor $f$.} We can compute the energy $W(f)$ by using Lemma \ref{l:WW0} and Corollary \ref{cor:orto} 
$$W(f)-W(Q)=W_0(f-Q)=W_0\big(r^2\eta_-(\theta)\big)+W_0\big(r^2\eta_0(\theta)\big)+\sum_{j:\alpha_j>2} c_j^2 W_0\big(r^{2+\eps_j}\phi_j(\theta)\big).$$
Now, using the Fourier expansion of $\eta_+$ and Corollary \ref{cor:orto}, we have 
\begin{equation}\label{e:eta+energy}
W_0\big(r^2\eta_+(\theta)\big)=\sum_{j:\alpha_j>2} c_j^2 W_0\big(r^{2}\phi_j(\theta)\big),
\end{equation}
so, the energy gain is given by:
\begin{equation}\label{e:estimate_fake}
W(f)-W(z)=\sum_{j:\alpha_j>2} c_j^2 \Big(W_0\big(r^{2+\eps_j}\phi_j(\theta)\big)-W_0\big(r^{2}\phi_j(\theta)\big)\Big).
\end{equation}	
\subsubsection{\bf Computation of the energy gain $W(z)-W(f)$.} In order to estimate $W(f)-W(z)$, we compute each of the terms in the right-hand side of \eqref{e:estimate_fake}. We use the fact that $\phi_j$ is an eigenfunction :
$$\int_{\partial B_1}|\nabla_\theta\phi_j|^2\,d\theta=\lambda_j \int_{\partial B_1}\phi_j^2\,d\theta=\lambda_j,$$ 
and we apply the identity \eqref{e:slicing_homo} from the Slicing Lemma \ref{l:slicing} 
\begin{align*}
W_0\big(r^{2+\eps_j} \phi_j(\theta)\big)-W_0\big(r^{2}\phi_j(\theta)\big)&= \frac{\lambda_j-2d}{d+2+2\eps_j}+\frac{\eps_j^2}{d+2+2\eps_j}-\frac{\lambda_j-2d}{d+2}\\
&=-\eps_j\frac{2(\lambda_j-2d)}{(d+2)(d+2+2\eps_j)}+\frac{\eps_j^2}{d+2+2\eps_j}.
\end{align*}
Now, a direct computation gives that if we replace $\eps_j=\alpha_j-2$ and $\lambda_j=\alpha_j(\alpha_j+d-2)$, we get  
\begin{align*}
W_0\big(r^{2+\eps_j} \phi_j(\theta)\big)-&W_0\big(r^{2}\phi_j(\theta)\big)
=\frac{\eps_j^2}{d+2+2\eps_j}\left(-\frac{2(d+2+\eps_j)}{d+2}+1\right)= -\frac{\eps_j^2}{d+2}\\
&=-\frac{\eps_j^2}{(2+\eps_j)(d+\eps_j)}\frac{\lambda_j-2d}{d+2}\le -\frac{1}{3(d+1)}\frac{\lambda_j-2d}{d+2}=-\frac{1}{3(d+1)}W_0\big(r^{2}\phi_j(\theta)\big).
\end{align*}
\subsubsection {\bf Epiperimetric inequality for the competitor $f$.} We now notice that the above estimate implies \eqref{e:log-epi-oo} in its strongest form (with $\gamma=0$): this inequality is known as {\it epiperimetric inequality}. Indeed, as a consequence of the above estimate, \eqref{e:eta+energy} and \eqref{e:estimate_fake}, we have 
$$W(f)-W(z)\le -\frac{1}{3(d+1)}\,\sum_{j:\alpha_j>2} c_j^2\,W_0\big(r^{2}\phi_j(\theta)\big) =-\frac{1}{3(d+1)}W_0\big(r^{2}\eta_+(\theta)\big).$$
Now, since the energy of $z$ is given by
\begin{equation}\label{e:z-energy-0056}
W(z)-W(Q)=W_0\big(r^{2}\eta_-(\theta)\big)+W_0\big(r^{2}\eta_+(\theta)\big)\le W_0\big(r^{2}\eta_+(\theta)\big),
\end{equation}
we get the following estimate (called epiperimetric inequality)
\begin{equation}\label{e:epi-00}
W(f)-W(z)\le -\frac{1}{3(d+1)}\big(W(z)-W(Q)\big),
\end{equation}
which is precisely \eqref{e:log-epi-oo} with $\gamma=0$, which is also the best possible exponent that we can expect. 

%

\subsubsection{\bf Why we do not use $f$ as a competitor ?} Unfortunately, the function $f$ cannot be used as a competitor in Theorem \ref{t:epi:sing} as it might not fulfill the requirement that 
\begin{center}
\rm the competitor should be non-negative.\rm	
\end{center}	
In fact, by taking the harmonic extension of $\eta_+$ (which might change sign on $\partial B_1$) we lose any information on the sign of $f$ as each of the terms $c_j\phi_j(\theta)$ of the Fourier expansion of $\eta_+$ is multiplied by a different homogeneity $r^{\alpha_j}$. 

\begin{center}
Thus, the challenge is to find a competitor that at the same time 

remains positive and decreases the energy. 

\end{center}


\subsubsection{\bf The new competitor $\widetilde f$.} 
We now try to modify the function $f$ from \eqref{e:fake} in order to have some more control on its sign, but we also try to keep the energy gain provided by the 'harmonic' competitor $f$.
The starting point is the following observation.
\begin{center}
\bf Claim. \it In \eqref{e:fake} we can take all exponents $\eps_j$ to be the same\\ and still have the epiperimetric inequality \eqref{e:epi-00}.\rm 
\end{center}
Precisely, taking in \eqref{e:fake} $\eps_j=\eps$, for every $j$, we consider the new competitor
\begin{equation}\label{e:fake2}
\widetilde f(r,\theta)=Q(r\theta)+r^2\eta_-(\theta)+r^2\eta_0(\theta)+r^{2+\eps}\eta_+(\theta).
\end{equation}

\subsubsection{\bf Estimating the energy gain ($\widetilde f$ is as good as $f$).} By using the computations that we already performed in the estimate of $W(f) - W(z)$, we can compute  
\begin{align*}
W(\widetilde f) - W(z)&=\sum_{j:\alpha_j>2} c_j^2 \Big(W_0\big(r^{2+\eps}\phi_j(\theta)\big)-W_0\big(r^{2}\phi_j(\theta)\big)\Big)\\
&= \sum_{j:\alpha_j>2} c_j^2\left(-\frac{2\eps(\lambda_j-2d)}{(d+2)(d+2+2\eps)}+\frac{\eps^2}{d+2+2\eps}\right).
\end{align*}
Now, for $\eps$ small enough the first (negative) term of the right-hand side wins against the second (positive) one. Thus, choosing $\eps$ small enough and isolating a dimensional constant $C_d$, we get 
\begin{align*}
W(\widetilde f)-W(z)&\le -\eps\, C_d\sum_{j:\alpha_j>2} c_j^2\frac{\lambda_j-2d}{d+2}=-\eps C_d\,W_0\big(r^2\eta_+(\theta)\big),
\end{align*}
which implies (after \eqref{e:z-energy-0056}) that {\bf the epiperimetric inequality holds for $\widetilde f$} :
$$ W(\widetilde f)-W(z)\le -\eps C_d\big(W(z)-W(Q)\big).$$
\subsubsection{\bf What can go wrong with $\widetilde f$ ?} Unfortunately, we still cannot prove that $\widetilde f$ is non-negative. For instance, what can go wrong is that, for some $\theta\in\partial B_1$, we have 
$$Q(\theta)+\eta_-(\theta)+\eta_0(\theta)<0\qquad\text{and}\qquad \eta_+(\theta)>-\big(Q(\theta)+\eta_-(\theta)+\eta_0(\theta)\big).$$
In this way the trace 
$$c(\theta)=Q(\theta)+\eta_-(\theta)+\eta_0(\theta)+\eta_+(\theta)$$ 
is non-negative, but the competitor $\widetilde f(r,\theta)$ becomes negative when $r$ is small enough. 

\subsubsection{\bf Construction of a positive competitor} In this section, we finally discuss the idea behind the direct constructions from  \cite{cospve1}, \cite{cospve2}, and Section \ref{s:direct}.
Precisely, in order to build a nonnegative competitor, we add a correction term $H:\partial B_1\to\R$ such that 
\begin{equation}\label{e:condition_competitor}
Q(\theta)+\eta_-(\theta)+\eta_0(\theta)+H(\theta)\ge 0\qquad\text{for every}\qquad \theta \in\partial B_1,
\end{equation}
and we consider the competitor
$$h(r,\theta)=r^2\big(Q(\theta)+\eta_-(\theta)+\eta_0(\theta)+H(\theta)\big)+r^{2+\eps}\big(\eta_+(\theta)-H(\theta)\big).$$
Since, by hypothesis the trace $c=Q+\eta_-+\eta_0+\eta_+$ is non-negative, we get that 
$$\big(Q+\eta_-+\eta_0+H\big)+\big(\eta_+-H\big)\ge 0,$$
but (together with \eqref{e:condition_competitor}) this implies that $h(r,\theta)\ge 0$ for every $r>0$.

\subsubsection{\bf The challenge behind the choice of $H$.} Notice that, if we want the condition \eqref{e:condition_competitor} to be fulfilled, we need $H$ to be large enough in order to compensate the negative part of $Q+\eta_-+\eta_0$. On the other hand,  $H$ increases the energy. In fact, we can re-write the competitor $h$ as 
$$h(r,\theta)=\widetilde f(r,\theta)+\big(r^2-r^{2+\eps}\big)H(\theta).$$
Since $\widetilde f$ is qualitatively the best possible choice for the energy (recall that $\widetilde f$ is as good as the harmonic extension $f$), the function $h$ will have bigger energy, which, of course, depends on the error introduced by the correction term $\big(r^2-r^{2+\eps}\big)H(\theta)$; finally, this means that in order to keep the energy of $h$ as small as possible, we need $H$ to be small. 

\begin{center}
This competition between the  constraint and the energy is precisely what\\ makes appear the exponent $\gamma$ in the log-epiperimetric inequality. 
\end{center}
Following the construction presented here, one can build many different competitors. For instance, in \cite{cospve1}, we use a function $H$ that depends on all the lower modes (including the linear ones) of the trace $c$. In Section \ref{s:direct} we propose a different function $H$, which is $(2d)$-eigenfunction on the sphere; this leads to a shorter proof, but the exponent $\gamma$ we get is not optimal.


\subsection{Constructive approach via a gradient flow}\label{sub:constructive} This section is dedicated to the constructive approach from \cite{cospve3} and Section \ref{s:constructive}. It is based on the idea that any function $h:B_1\to\R$ can be seen as a family of functions (a flow) $h(r,\cdot):\partial B_1\to\R$ parametrized over the radial coordinate $r\in(0,1]$. This way to see the competitor was first used in the context of the one-phase Bernoulli problem, in \cite{spve} and later in \cite{esv1}, where the competitor is not explicit, but is constructed starting from a solution of an evolution problem. Recently, in \cite{cospve3} we applied this idea to the case of the obstacle and the thin-obstacle problems. In Section \ref{s:constructive}, we used a general result (that we prove in the Appendix) and we construct a new flow, which simulates the behavior of the gradient flow from \cite{cospve3}, but is also closely related to the explicit competitor  from Section \ref{s:constructive}. As in the previous Section \ref{sub:direct}, we proceed by dividing the exposition in several paragraphs, each one representing a different step of the construction.

\medskip


\subsubsection{\bf Slicing Lemma} The starting point is the slicing lemma (Lemma \ref{l:slicing}) which allows to write down the energy of the competitor $h(r,\theta)$ as an integral over the different spheres (slices) $\partial B_r$, $r\in(0,1]$. Precisely, one can compute that (see Lemma \ref{l:slicing}) if $h$ is of the form 
$$h(r,\theta)=r^2u(r,\theta),$$ 
then its energy $W(h)$ is given by
\begin{equation}\label{e:slicing-oo}
W(h)=W(r^2u)=\int_0^1 \mathcal F\big(u(r,\cdot)\big) r^{d+1} \,dr+\int_0^1r^{d+3}\int_{\partial B_1} |\de_r u|^2\,d\HH^{d-1}\,dr,
\end{equation}
where $\mathcal F$ is a functional acting on $H^1(\partial B_1)$. 

Thus, we will search for a competitor of the form  $h(r,\theta)=r^2u(r,\theta)$, where $u$ can be read as a one-parameter family of functions 
$$u(r,\cdot):\partial B_1\to\R\,,\quad r\in(0,1].$$ 


\subsubsection{\bf Energy of the $2$-homogeneous extension $z$.} In this framework, the $2$-homogeneous extension $z$, given in polar coordinates by
$$z(r,\theta)=r^2c(\theta),$$ 
corresponds to the case in which the flow $r\mapsto u(r,\cdot)\in H^1(\partial B_1)$ is constant in $r$. In this case, the second term in the right-hand side of \eqref{e:slicing-oo} is zero and so we get
$$W(z)=\int_0^1 \mathcal F\big(c\big) r^{d+1} \,dr=\frac1{d+2}\mathcal F(c).$$

\subsubsection{\bf The log-epiperimetric inequality in terms of $\mathcal F$.} As a consequence, the log-epiperimetric inequality \eqref{e:log-epi-o}, for $h(r,\theta)=r^2 u(r,\theta)$ and the $2$-homogeneous extension $z(r,\theta)=r^2c(\theta)$, can be written in terms of the new functional $\mathcal F$ as: 
\begin{equation}\label{e:log-epi-ooo}
\int_0^1 \Big(\mathcal F\big(u(r,\cdot)\big)-\mathcal F(c)\Big) r^{d+1} \,dr+\int_0^1r^{d+3}\int_{\partial B_1} |\de_r u|^2\,d\HH^{d-1}\,dr\le -\eps\big(\mathcal F(c)-\mathcal F(\mathcal S)\big)^{1+\gamma},
\end{equation}
where $\mathcal F(S):=(d+2)W(\mathcal S)$.
Thus, in order to find a function $u$ for which \eqref{e:log-epi-ooo} holds, we have to search for a function $u:(0,1]\times \partial B_1\to\R$, which in particular satisfies 
$$u(1,\theta)=c(\theta)\quad\text{for}\quad \theta\in\partial B_1\qquad\text{and}\qquad \mathcal F(u(r,\cdot))\le \mathcal F(c)\quad\text{for}\quad r\le 1,$$
but we also have to take into account the cost of modifying $u$ from one scale to another, which is given by the error term 
$$\int_0^1r^{d+3}\int_{\partial B_1} |\de_r u|^2\,d\HH^{d-1}\,dr.$$

\subsubsection{\bf Reparametrization of the flow.} Now, since we will work with flows, it is convenient to consider functions of the form $\varphi:[0,+\infty)\to H^1(\partial B_1)$ and to define the competitor $u$ as 
$$u(r,\theta)=\varphi(- \kappa\ln r,\theta)\quad\text{for}\quad r\in(0,1]\quad\text{and}\quad \theta\in\partial B_1.$$
Thus, we have that 
$$\varphi(0,\cdot)=c(\cdot)\quad\text{on}\quad \partial B_1,$$
and the energy of the competitor 
$$h(r,\theta)=r^2\varphi(- \kappa\ln r,\theta),$$
can be written as (see Lemma \ref{l:slicing2})
$$W(h)=\frac{\mathcal F(\varphi(0))}{d+2}+\int_0^{+\infty} \left(\frac{1}{d+2} \, \varphi'(t)\cdot\nabla\mathcal F(\varphi(t))+ \kappa\|\varphi'(t)\|^{2}_{L^2(\partial B_1)}  \right)\,e^{-\frac{t(d+2)}\kappa}   \,dt,$$
or, alternatively, as 
$$W(h)=\frac{\mathcal F(\varphi(0))}{d+2}+\frac1\kappa\int_0^\infty \Big(\mathcal F(\varphi(t)) -\mathcal F(\varphi(0))\Big)\, e^{-\frac{t(d+2)}\kappa}\,dt+\kappa\int_{0}^\infty\|\varphi'(t)\|_{L^2(\partial B_1)}^2   \,e^{-\frac{t(d+2)}\kappa}\,dt.$$

\subsubsection{\bf The best flow is the gradient flow.} The last expression of the energy $W(h)$ suggests that, in order to construct a competitor with lower energy, we have to act on the term $\mathcal F(\varphi(t))$. A natural way to do so, is to choose $\varphi=\psi$, where $\psi$ is the gradient flow of $\mathcal F$
\begin{equation}\label{e:heat}
\begin{cases}\psi'(t)=-\nabla\mathcal F(\psi(t))\quad\text{for}\quad t\in(0,+\infty),\\
\psi(0)=c.\end{cases}
\end{equation}

\subsubsection{\bf The role of the stopping time.}  The basic idea behind any of the constructions we propose is, given a homogeneous function $z:B_1\to\R$, to build a competitor that simulates the behavior of the solution (in our case, the solution of the obstacle problem) with the same values as $z$ on $\partial B_1$. Now, for the obstacle problem, it is well known that if $h(r,\theta)=r^2u(r,\theta)$ is precisely the solution of the obstacle problem, then the energy $\mathcal F(u(r,\cdot))$ remains positive for every $r>0$ (this is a consequence of the Weiss' monotonicity formula \cite{weiss}). Thus, we do not expect to find a good competitor with negative energy. So, instead of taking $\varphi$ to be precisely the gradient flow $\psi$, we stop $\psi$ at a certain time $T$. The choice of the stopping time is fundamental step in the epiperimetric inequality and will be chosen in function of the energy $\mathcal F(\psi(T))$.

\subsubsection{\bf Definition of the flow and the competitor.}\label{subs:competitor} We define the flow $\varphi(t,\theta)$ as 
$$\varphi(t):=\psi(t)\quad\text{if}\quad t\in[0,T],\qquad \varphi(t):=\psi(T)\quad\text{if}\quad t\ge T,$$
and teh competitor $h$ as 
$$h(r,\theta)=r^2\varphi(-\kappa \ln r,\theta),$$
where $\kappa$ and $T$ will be chosen below. 

\subsubsection{\bf The energy of the competitor $h$.} Again, the energy of the competitor $u$ can be expressed in two different ways (we refer to Lemma \ref{l:slicing2} for the computation): 
\begin{equation}\label{e:first_en_eq}
W(h)-W(z)=\int_0^T \left(\frac1{d+2}\,\psi'(t)\cdot\nabla \mathcal F(\psi(t)) + \kappa\|\nabla \mathcal F(\psi(t))\|_{L^2(\partial B_1)}^2\right)\,e^{-\frac{t(d+2)}\kappa}\,dt,
\end{equation}
where we recall that $W(z)=\frac1{d+2}\mathcal F(\psi(0))$, and 
\begin{align}\label{e:second_en_eq}
W(h)-W(z)&=\frac1\kappa\int_0^T \Big(\mathcal F(\psi(t)) -\mathcal F(\psi(0))\Big)\, e^{-\frac{t(d+2)}\kappa}\,dt+\frac1{d+2} \Big(\mathcal F(\psi(T)) -\mathcal F(\psi(0))\Big)\, e^{-\frac{T(d+2)}\kappa}\notag\\
&\qquad\qquad \qquad\qquad\qquad+\kappa\int_{0}^T\|\psi'(t)\|_{L^2(\partial B_1)}^2   \,e^{-\frac{t(d+2)}\kappa}\,dt.
\end{align}
Moreover, we notice that, by summing up \eqref{e:first_en_eq} and \eqref{e:second_en_eq}, we obtain 
\begin{align*}
W(h)-W(z)&=\frac1{2(d+2)}\int_0^T \psi'(t)\cdot\nabla \mathcal F(\psi(t))\,e^{-\frac{t(d+2)}\kappa}\,dt\notag\\
&\qquad\qquad +\frac{1}{2\kappa}\int_0^T \Big(\mathcal F(\psi(t)) -\mathcal F(\psi(0))\Big)\, e^{-\frac{t(d+2)}\kappa}\,dt\notag\\
&\qquad\qquad \qquad\qquad +\frac1{2(d+2)} \Big(\mathcal F(\psi(T)) -\mathcal F(\psi(0))\Big)\, e^{-\frac{T(d+2)}\kappa}\notag\\
&\qquad\qquad \qquad\qquad\qquad\qquad +\kappa\int_{0}^T\|\psi'(t)\|_{L^2(\partial B_1)}^2   \,e^{-\frac{t(d+2)}\kappa}\,dt.\notag
\end{align*}

\subsubsection{\bf Estimating the energy gain $W(h)-W(z)$.} First, notice that the second term in the right-hand side of the above identity is negative. This is due to the fact that the energy $\mathcal F$ is decreasing along the flow $\psi(t)$. Thus, we get 
\begin{align}
W(r^2u)-W(z)&\le \frac1{2(d+2)}\int_0^T \psi'(t)\cdot\nabla \mathcal F(\psi(t))\,e^{-\frac{t(d+2)}\kappa}\,dt\label{e:main_en_eq_1}\\
&\qquad\qquad  +\frac1{2(d+2)} \Big(\mathcal F(\psi(T)) -\mathcal F(\psi(0))\Big)\, e^{-\frac{T(d+2)}\kappa}\label{e:main_en_eq_2}\\
&\qquad\qquad \qquad\qquad+\kappa\int_{0}^T\|\psi'(t)\|_{L^2(\partial B_1)}^2   \,e^{-\frac{t(d+2)}\kappa}\,dt.\label{e:main_en_eq_3}
\end{align}
Now since for the gradient flow $\psi$ we have the identities
\begin{equation}\label{e:heat_derivative}
\frac{\partial}{\partial t}\mathcal F(\psi(t))=\psi'(t)\cdot\nabla \mathcal F(\psi(t))=-\|\nabla \mathcal F(\psi(t))\|_{L^2(\partial B_1)}^2=-\|\psi'(t)\|_{L^2(\partial B_1)}^2,
\end{equation}
the terms \eqref{e:main_en_eq_1} and \eqref{e:main_en_eq_2} are negative. Thus, we start by estimating the third term \eqref{e:main_en_eq_3}. 
\subsubsection{\bf Eliminating the error term \eqref{e:main_en_eq_3} - the choice of $\kappa$.} Using again \eqref{e:heat_derivative}, we get that \eqref{e:main_en_eq_3} can be absorbed in \eqref{e:main_en_eq_1}. Indeed, by choosing $\kappa$ small enough, for instance,
$$\kappa= \frac1{4(d+2)},$$ 
we obtain
\begin{align}
W(h)-W(z)&\le \frac1{4(d+2)}\int_0^T \psi'(t)\cdot\nabla \mathcal F(\psi(t))\,e^{-\frac{t(d+2)}\kappa}\,dt\label{e:1main_en_eq_1}\\
&\qquad\qquad  +\frac1{2(d+2)} \Big(\mathcal F(\psi(T)) -\mathcal F(\psi(0))\Big)\, e^{-\frac{T(d+2)}\kappa}.\label{e:1main_en_eq_2}
\end{align}
Now, both terms \eqref{e:1main_en_eq_1} and \eqref{e:1main_en_eq_2} are negative.

\subsubsection{\bf Choice of the stopping time $T$.} 
Let $T_{\sfrac12}$ be defined as
$$T_{\sfrac12}=\sup\Big\{\tau\ :\ \mathcal F(\psi(t))-\mathcal F(\mathcal S)\ge \frac12\big(\mathcal F(\psi(0))-\mathcal F(\mathcal S)\big)\quad\text{for every}\quad t\in[0,\tau]\Big\}.$$
We consider two cases.

\noindent {\bf Case 1. \it The energy decreases rapidly along the flow: $T_{\sfrac12}\le 1$.} In this case, we choose $T=T_{\sfrac12}$ and we compute 
\begin{align*}
W(h)-W(z)&\le \frac{e^{-\frac{T(d+2)}\kappa}}{2(d+2)} \Big(\mathcal F(\psi(T)) -\mathcal F(\psi(0))\Big)\\
&\qquad\le \frac{e^{-\frac{d+2}\kappa}}{2(d+2)} \Big(\mathcal F(\psi(T)) -\mathcal F(\psi(0))\Big)\\
&\qquad\qquad= \frac{e^{-\frac{d+2}\kappa}}{2(d+2)} \Big(\big(\mathcal F(\psi(T)) -\mathcal F(\mathcal S)\big)-\big(\mathcal F(\psi(0))-\mathcal F(\mathcal S)\big)\Big)\\
&\qquad\qquad\qquad=-\frac12 \frac{e^{-\frac{d+2}\kappa}}{2(d+2)} \Big(\mathcal F(\psi(0))-\mathcal F(\mathcal S)\Big),
\end{align*}
which concludes the proof of the log-epiperimetric inequality \eqref{e:log-epi-o} (equivalently \eqref{e:log-epi-ooo}) in this first case, in which we get \eqref{e:log-epi-o} with the best possible exponent $\gamma=0$.
\medskip

\noindent {\bf Case 2. \it The energy decreases slowly along the flow: $T_{\sfrac12}\ge 1$.} In this case, we choose $T=1$ and the energy gain comes from the term \eqref{e:1main_en_eq_1}. Indeed, 
$$W(h)-W(z)\le \int_0^T \psi'(t)\cdot\nabla \mathcal F(\psi(t))\,e^{-\frac{t(d+2)}\kappa}\,dt= -\int_0^T \|\nabla \mathcal F(\psi(t))\|_{L^2(\partial B_1)}^2\,e^{-\frac{t(d+2)}\kappa}\,dt.$$
In this second case, the proof of \eqref{e:log-epi-ooo} is more involved and is based on the so-called \L ojasiewicz inequality.

\subsubsection{\bf \L ojasiewicz inequality for the functional $\mathcal F$.} In order to conclude the proof of \eqref{e:log-epi-ooo} also in the second case, we need to estimate $\|\nabla \mathcal F(\psi(t))\|$ from below in terms of the energy $\big(\mathcal F(\psi(t))-\mathcal F(\mathcal S)\big)$. Precisely, we need an inequality of the form 
\begin{equation}\label{e:loja0}
C_{\text{\sc ls}}\big(\mathcal F(\phi)-\mathcal F(\mathcal S)\big)^{1+\beta}\le\|\nabla \mathcal F(\phi)\|^2\qquad\text{for every}\quad \phi\quad\text{such that}\quad\mathcal F(\phi)\ge \mathcal F(\mathcal S),
\end{equation}
where $C_{\text{\sc ls}}>0$ and $0\le \beta<1$. 
The above estimate is called \L ojasiewicz inequality and is well-known in the case when $\mathcal F$ is an analytic function on a finite dimensional space; we refer to \cite{cospve3} for a more detailed discussion on the \L ojasiewicz inequality and its different versions and applications.  Now, using this estimate and the choice of the stopping time $T=1\le T_{\sfrac12}$ and $\kappa=\frac1{4(d+2)}$, we can estimate \eqref{e:1main_en_eq_1} as follows: 
\begin{align*}
-\int_0^T \|\nabla \mathcal F(\psi(t))\|_{L^2(\partial B_1)}^2\,e^{-\frac{t(d+2)}\kappa}\,dt
&\le -C_{\text{\sc ls}}\int_0^T \big(\mathcal F(\varphi(t))-\mathcal F(\mathcal S)\big)^{1+\beta}\,e^{-\frac{t(d+2)}\kappa}\,dt\\
&\le -\frac{C_{\text{\sc ls}}}{2^{1+\gamma}}\int_0^T \big(\mathcal F(\varphi(0))-\mathcal F(\mathcal S)\big)^{1+\beta}\,e^{-\frac{t(d+2)}\kappa}\,dt\\
&\le -\frac{C_{\text{\sc ls}}\kappa}{(d+2)2^{1+\beta}}\left(1-e^{-\frac{T(d+2)}\kappa}\right) \big(\mathcal F(\varphi(0))-\mathcal F(\mathcal S)\big)^{1+\beta}\\
&= -C_dC_{\text{\sc ls}}\big(\mathcal F(\varphi(0))-\mathcal F(\mathcal S)\big)^{1+\beta},
\end{align*}
where $C_d$ is a dimensional constant. This concludes the proof of the log-epiperimetric inequality \eqref{e:log-epi-ooo} in the second case; the exponent $\gamma=\beta$ is precisely the one from the \L ojasiewicz inequality.

\subsubsection{\bf What can go wrong with the gradient flow ?} Up to this point, we have proved that the competitor $h$ from Section \ref{subs:competitor} satisfies the log-epiperimetric inequality provided that the \L ojasiewicz inequality \eqref{e:loja0} holds along the flow. For what concerns the functional $\mathcal F$ the \L ojasiewicz inequality holds and is relatively easy to prove (see for instance the Introduction of \cite{cospve3}). On the other hand, in order to conclude that $h$ is an admissible competitor in Theorem \ref{t:epi:sing}, we must have that $h$ is non-negative, or in terms of the flow $\psi$, that $\psi(t)$ is non-negative on $\partial B_1$, for every $t\in[0,T]$. Unfortunately, we cannot assure that, for any $\psi(0)=c$, the flow remains positive. Thus, in \cite{cospve3}, we propose a different construction.

\subsubsection{\bf Constrained gradient flow.}
In \cite{cospve3}, we construct the competitor $h$ from Subsection \ref{subs:competitor} starting from a flow $\psi$, which is the gradient flow of $\mathcal F$ constrained to remain in the (convex) space $\mathcal K$ of nonnegative functions defined on $\partial B_1$. This constrained flow, is of course different with respect to the original gradient flow as the decay of the energy $\mathcal F$ may become much slower when the flow hits the boundary of the constraint $\mathcal K$, but still, this flow has several properties, that make it very similar to the (unconstrained) gradient flow of $\mathcal F$. In particular, we can repeat precisely the same construction presented above: the equalities \eqref{e:first_en_eq} and \eqref{e:second_en_eq} are general and hold for any function $\psi:[0,+\infty)\to H^1(\partial B_1)$, while the identities \eqref{e:heat_derivative} should be replaced by
\begin{equation}\label{e:heat_derivative_constrained}
\psi'(t)\cdot\nabla \mathcal F(\psi(t))=-\|\nabla \mathcal F(\psi(t))\|_{\mathcal K}^2=-\|\psi'(t)\|_{L^2(\partial B_1)}^2. 
\end{equation}
Thus, the proof of the epiperimetric inequality is precisely the same, with the only difference that the norm of $\nabla \mathcal F$ is replaced by  
\begin{equation}\label{e:Knorm}
\|\nabla \mathcal F(\phi)\|_{\mathcal K}=\sup\left\{0\,,\,\sup_{v\in \mathcal K\setminus\{\phi\}}\left\{ \frac{-(v-\phi)\cdot\nabla\mathcal F(\phi)}{\|v-\phi\|_{L^2(\partial B_1)}} \right\}\right\},
\end{equation}
for any nonnegative $\phi\in H^2(\partial B_1)$. Now, the positivity constraint for this flow is automatically satisfied, so the main challenge is to prove an estimate that can replace the \L ojasiewicz inequality \eqref{e:loja0}. Indeed, in order to complete the proof, in \cite{cospve3}, we prove the following stronger version of \eqref{e:loja0}, that we called {\it constrained \L ojasiewicz inequality} :
\begin{equation}\label{e:loja-constrained}
C_{\text{\sc cls}}\big(\mathcal F(\phi)-\mathcal F(\mathcal S)\big)^{1+\beta}\le\|\nabla \mathcal F(\phi)\|_{\mathcal K}^2\ .
\end{equation}
The proof is based on the choice of a suitable test direction $\phi$ in \eqref{e:Knorm}, that turns out to be precisely the function $h_2$ from \eqref{e:condition_competitor}. 


\subsubsection{\bf The log-epiperimetric inequality - a general construction.} In Theorem \ref{p:epik} of the Appendix, we make a more general construction of a competitor $h$ starting from a flow $\psi:[0,+\infty)\to H^1(\partial B_1)$ that satisfies 
satisfies the following conditions : 
\begin{enumerate}[(i)]
	\item for any $t\ge 0$, the function $\psi(t)$ remains nonnegative along the flow; this assures that the final competitor is admissible;
	\item the following inequality holds :
	$$-\psi'(t)\cdot \nabla\mathcal F(\psi(t))\ge \|\psi'(t)\|^2_{L^2(\partial B_1)}\quad\text{for every}\quad t>0\ ;$$
	this guarantees that the energy $\mathcal F(\psi(t))$ is decreasing in $t$ and that the error term \eqref{e:main_en_eq_3} can be absorbed by the energy gain \eqref{e:main_en_eq_1};
	\item the following \L ojasiewicz-type inequality (which replaces \eqref{e:loja0}) hold
	\begin{equation}\label{e:loja0}
	C_{\text{\sc lst}}\big(\mathcal F(\psi(t))-\mathcal F(\mathcal S)\big)^{1+\beta}\le-\psi'(t)\cdot \nabla \mathcal F(\psi(t))\ ,
	\end{equation}
	for every $t>0$ such that $\psi'(t)\neq 0$.
\end{enumerate}	 
This abstract result can be used also in other contexts (for instance, in can be applied to the thin-obstacle problem). In Section \ref{s:direct} we apply Theorem \ref{p:epik} to a specific flow, for which the derivative $\psi'(t)$ does not depend on $t$ and we choose the direction $\psi'$ to be precisely the one from \eqref{e:condition_competitor}; thus, recovering the competitor from Section \ref{s:direct}. 

\newpage

\appendix 

\section{Evolution approach to the log-epiperimetric inequality}


In this section we give a general procedure that reduces the construction of a competitor for the log-epiperimetric inequality to the construction of a flow that satisfies two key hypotheses: an energy dissipation estimate and a \L ojasiewicz inequality. Our construction applies not only to the specific case of the obstacle problem, but can be used to prove log-epiperimetric inequalities for any functional that satisfies suitable homogeneity properties. In particular, it can be used in the context of to the thin-obstacle problem and, more generally, to the obstacle problem for the $s$-Laplacian. 
Our main result is Theorem \ref{p:epik}. Before stating it, we introduce some notation and we list the main assumptions that we make. 
\medskip 

\noindent {\bf Homogeneity.} We fix a positive real constant $\alpha>0$; in the case of the obstacle problem (that is, in Theorem \ref{t:epi:sing}) $\alpha$  is equal to $2$. 

\medskip 

\noindent {\bf Energy.} We consider two functionals 
$$\mathcal G:H^1(B_1)\to\R\qquad\text{and}\qquad  \mathcal F: H^1(\partial B_1)\to\R\ ,$$ 
with the following properties. 
\begin{itemize}
	\item $\mathcal F$ is differentiable. Precisely, there is a functional $\nabla\mathcal F:H^2(\partial B_1)\to L^2(\partial B_1)$ such that 
	$$\mathcal F(u+v)=\mathcal F(u)+v\cdot \nabla\mathcal F(u)+o\left(\|v\|_{H^1(\partial B_1)}\right),$$
	for every $u\in H^2(\partial B_1)$ and every $v\in H^1(\partial B_1)$.\vspace*{0.1cm}
	\item $\mathcal G$ and $\mathcal F$ are related through a slicing identity. Precisely, we assume that there is a constant $C_{\text{\tiny\sc sl}}>0$ such that, for any function $u=u(r,\theta)\in H^1([0,1]\times \de B_1)$, we have 
	\begin{equation}\label{e:slicingG}
	\mathcal G\big(r^\alpha\,u(r,\theta)\big)\leq  \int_0^1 \mathcal F(u(r,\cdot))\, r^{2\alpha+d-3}\, dr+C_{\text{\tiny\sc sl}}\, \int_{0}^1\int_{\de B_1} |\de_r u|^2\,r^{2\alpha+d-1}\,d\HH^{d-1}dr\,,
	\end{equation} 
	with equality if $u$ is constant in the $r$ variable. In this case, we have 
	\begin{equation}\label{e:slicingG2}
	\mathcal G\big(r^\alpha u(\theta)\big)= \int_0^1 \mathcal F(u)\, r^{2\alpha+d-3}\, dr=\frac{1}{2\alpha+d-2}\mathcal F(u).
	\end{equation} 
\end{itemize}
\medskip

\noindent{\bf Critical set.} We suppose that there is a compact set $\mathcal S\subset H^2(\partial B_1)$ such that: 
\begin{itemize}
	\item $\mathcal S$ is a set of critical points for $\mathcal F$, that is :
	$$\nabla\mathcal F(Q)=0\quad\text{for every}\quad Q\in\mathcal S.$$
	\item $\mathcal F$ is constant on $\mathcal S$; and we denote this constant by $\mathcal F(\mathcal S)$:
	$$\mathcal F(Q)=\mathcal F(\mathcal S)\quad\text{for every}\quad Q\in\mathcal S.$$

\end{itemize}

\medskip

\noindent{\bf Flow.} We suppose to be given a constant $T_{\text{max}}>0$ and a function $\psi: [0,T_{\text{max}}]\to H^2(\partial B_1)$ such that 
$$\psi\in L^2\big([0,T_{\text{max}}]; H^2(\partial B_1)\big)\,\cap\, H^1\big((0,T_{\text{max}}); L^2(\partial B_1)\big),$$
$$or\qquad \psi\in L^2\big([0,T_{\text{max}}]; H^1(\partial B_1)\big)\,\cap\, H^1\big((0,T_{\text{max}}); H^1(\partial B_1)\big).$$
In both cases the energy $\mathcal F(\psi(t))$ is well-defined and (weakly) differentiable in $t$. Precisely, 
$$\frac{d}{dt}\mathcal F(\psi(t))=\psi'(t)\cdot \nabla \mathcal F(\psi(t))\quad \text{for every}\quad t\in(0,T_{\text{max}}),$$
and the map 
$$(0,T_{\text{max}})\ni t\mapsto \psi'(t)\cdot \nabla \mathcal F(\psi(t))\in\R$$ 
is integrable, where the dot indicates the scalar product in $L^2(\partial B_1)$, or the pairing between $H^1(\partial B_1)$ and its dual space. Moreover, we assume that the flow and the energy $\mathcal F$ satisfy the following properties. 
\begin{itemize}
	\item {\bf Energy dissipation inequality.} There are constants $C_{\text{\tiny\sc ed}}>0$ and $p\ge2$ such that the following inequality holds
	\begin{equation}\label{e:energy_dissipation_condition}
	C_{\text{\tiny\sc ed}}\min\Big\{\|\psi'(t)\|_{L^2(\partial B_1)}^{2},\|\psi'(t)\|_{L^2(\partial B_1)}^{p}\Big\}\le -\psi'(t)\cdot \nabla \mathcal F(\psi(t)),
	\end{equation}
	for almost every $t\ge 0$. In particular, the energy is non-increasing along the flow : 
	\begin{equation}\label{e:decreasing_energy}
	\mathcal F(\psi(t))-\mathcal F(\psi(s))=\int_s^t\psi'(\tau)\cdot \nabla \mathcal F(\psi(\tau))\,d\tau\le 0\qquad\text{for every}\qquad 0\le s\le t.
	\end{equation}
	\item {\bf \L ojasiewicz inequality.} There are constants $C_{\text{\tiny\sc ls}}>0$ and $\beta\in[0,1)$ such that $\mathcal F$ satisfies the following inequality along the flow
	\begin{equation}\label{e:lojaK}
	C_{\text{\tiny\sc ls}}\big(\mathcal F(\psi(t))-\mathcal F(\mathcal S)\big)^{1+\beta}\le  -\psi'(t)\cdot \nabla \mathcal F(\psi(t))\quad\text{for almost every}\quad t>0.
	\end{equation}
\end{itemize}

\begin{teo}\label{p:epik}
	Suppose that the functionals $\mathcal G$ and $\mathcal F$, and the flow $\psi$ satisfy the hypotheses above. Moreover, we assume that the exponents $\beta\in[0,1)$ and $p\ge 2$, from \eqref{e:energy_dissipation_condition} and \eqref{e:lojaK} respectively, are such that 
	$$(1+\beta)\left(1-\frac1p\right)<1.$$ 
	Then, there are constants $\delta_0>0$, $E>0$, $\gamma\in[0,1)$ and $\eps>0$, depending on $d$, $\alpha$, $p$, $\beta$, $T_{\text{max}}$, $C_{\text{\sc sl}}$, $C_{\text{\sc ls}}$ and $C_{\text{\sc ed}}$, such that the following holds. If $c\in H^1(\partial B_1)$ satisfies  
	$$c=\psi(0)
	\qquad\text{and}\qquad \mathcal F(c)-\mathcal F(\mathcal S)\le E,$$ 
	then there exists a function $h=h(r,\theta)\in H^1(B_1)$ satisfying $h(1,\cdot)=c(\cdot)$ on $\partial B_1$, and 
	\begin{equation}\label{e:logepiK}
	\mathcal G(h)-\mathcal G(\mathcal S)\leq \big(1-\eps |\mathcal G(z)-\mathcal G(\mathcal S)|^{\gamma} \big) \big(\mathcal G(z)-\mathcal G(\mathcal S)\big)
	\end{equation}
	where $\gamma=(1+\beta)\big(2-\frac2p\big)-1$, and where we used the notations
	$$z(r,\theta):=r^\alpha c(\theta)\quad\text{and}\quad \mathcal G(\mathcal S):=\frac{1}{2\alpha+d-2} \mathcal F(\mathcal S).$$
\end{teo}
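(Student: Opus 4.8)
The plan is to construct the competitor $h$ by reparametrizing the flow $\psi$ in the radial variable, exactly as sketched in Section \ref{sub:constructive}, and to choose a stopping time $T$ according to how fast the energy $\mathcal F(\psi(t))$ decays. First I would fix the parameter $\kappa>0$ (to be chosen small, depending only on $C_{\text{\sc sl}}$, $\alpha$, $d$, $C_{\text{\sc ed}}$) and define $\varphi(t):=\psi(t)$ for $t\in[0,T]$ and $\varphi(t):=\psi(T)$ for $t\ge T$, then set $h(r,\theta):=r^\alpha\varphi(-\kappa\ln r,\theta)$. By the slicing identity \eqref{e:slicingG} (which becomes an equality where $\varphi$ is constant, i.e.\ for $r\le e^{-T/\kappa}$), one gets, after the change of variables $t=-\kappa\ln r$, an upper bound for $\mathcal G(h)$ of the form
\begin{equation*}
\mathcal G(h)-\mathcal G(z)\le \frac{1}{\kappa(2\alpha+d-2)}\int_0^T\big(\mathcal F(\psi(t))-\mathcal F(\psi(0))\big)e^{-\frac{t(2\alpha+d-2)}{\kappa}}\,dt+\frac{\mathcal F(\psi(T))-\mathcal F(\psi(0))}{2\alpha+d-2}e^{-\frac{T(2\alpha+d-2)}{\kappa}}+C_{\text{\sc sl}}\kappa^2\!\int_0^T\!\|\psi'(t)\|_{L^2}^2 e^{-\frac{t(2\alpha+d-2)}{\kappa}}\,dt,
\end{equation*}
the analogue of \eqref{e:slicing666}--\eqref{e:slicing33} and of \eqref{e:second_en_eq}. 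Adding a copy of the dual expression \eqref{e:first_en_eq} and using the energy dissipation inequality \eqref{e:energy_dissipation_condition} to absorb the error term $C_{\text{\sc sl}}\kappa^2\int_0^T\|\psi'\|_{L^2}^2 e^{\cdots}$ into $\int_0^T\psi'\cdot\nabla\mathcal F(\psi)\,e^{\cdots}$ (this is where $\kappa$ must be small and where the exponent $p$ enters: when $\|\psi'\|_{L^2}\le1$ one uses the $p$-power in \eqref{e:energy_dissipation_condition}, when $\|\psi'\|_{L^2}\ge1$ the quadratic one), one reduces to
\begin{equation*}
\mathcal G(h)-\mathcal G(z)\le c_1\int_0^T\psi'(t)\cdot\nabla\mathcal F(\psi(t))\,e^{-\frac{t(2\alpha+d-2)}{\kappa}}\,dt+c_2\big(\mathcal F(\psi(T))-\mathcal F(\psi(0))\big)e^{-\frac{T(2\alpha+d-2)}{\kappa}},
\end{equation*}
both terms being nonpositive by \eqref{e:decreasing_energy}.

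Next I would run the dichotomy on the stopping time, copying the scheme from Section \ref{sub:constructive}. Let $E_0:=\mathcal F(c)-\mathcal F(\mathcal S)$ and let $T_{1/2}$ be the first time the energy excess drops below $E_0/2$ (set $T_{1/2}=+\infty$ if this never happens on $[0,T_{\text{max}}]$). In Case 1, $T_{1/2}\le 1$ (or $\le$ some fixed fraction of $T_{\text{max}}$): choose $T=T_{1/2}$, discard the first (negative) term, and bound the second by $c_2\big(\tfrac12 E_0-E_0\big)e^{-(2\alpha+d-2)T_{1/2}/\kappa}\le -c_3 E_0$ since $e^{-(\cdots)T_{1/2}/\kappa}\ge e^{-(\cdots)/\kappa}$ is a fixed constant; this yields \eqref{e:logepiK} with $\gamma=0$. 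In Case 2, $T_{1/2}\ge 1$: choose $T=1$ (or a fixed small time $\le T_{\text{max}}$), discard the second term, and on $[0,1]$ the energy excess stays $\ge E_0/2$, so by the \L ojasiewicz inequality \eqref{e:lojaK},
\begin{equation*}
c_1\int_0^T\psi'(t)\cdot\nabla\mathcal F(\psi(t))\,e^{-\frac{t(2\alpha+d-2)}{\kappa}}\,dt\le -c_1 C_{\text{\sc ls}}\int_0^1\big(\mathcal F(\psi(t))-\mathcal F(\mathcal S)\big)^{1+\beta}e^{-\frac{t(2\alpha+d-2)}{\kappa}}\,dt\le -c_4\,E_0^{1+\beta}.
\end{equation*}
This proves \eqref{e:logepiK} with $\gamma=\beta$ in Case 2.

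Finally, I would reconcile the two cases with the exponent $\gamma=(1+\beta)(2-\tfrac2p)-1$ claimed in the statement. Translating from $\mathcal F$ to $\mathcal G$ via $\mathcal G(z)-\mathcal G(\mathcal S)=\tfrac1{2\alpha+d-2}E_0$, both Case 1 ($\gamma=0$) and Case 2 ($\gamma=\beta\le\gamma$) give the required decrease once one checks $E_0\le E$ small, so that $E_0^{1+\beta}\ge E_0^{1+\gamma}$ and $E_0\ge E_0^{1+\gamma}$; hence the single exponent $\gamma$ in \eqref{e:logepiK} works in both cases. The hypothesis $(1+\beta)(1-\tfrac1p)<1$, i.e.\ $\gamma<1$, is exactly what is needed so that $\gamma\in[0,1)$; I expect it to be used precisely at the point where one absorbs the quadratic error term against the $p$-homogeneous lower bound in \eqref{e:energy_dissipation_condition} while keeping a genuine power gain in the \L ojasiewicz step. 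The main obstacle, and the step requiring the most care, is the absorption of the error term $\int_0^T\|\psi'\|_{L^2}^2 e^{\cdots}$: because \eqref{e:energy_dissipation_condition} only controls the \emph{minimum} of the quadratic and $p$-th powers, one must split the time interval according to whether $\|\psi'(t)\|_{L^2}$ is large or small, and verify that in the small regime the resulting gain $\big(\int_0^T\|\psi'\|_{L^2}^2 e^{\cdots}\big)^{\text{something}}$ still dominates — after the \L ojasiewicz bound — a fixed power $E_0^{1+\gamma}$; it is this interplay that fixes the value $\gamma=(1+\beta)(2-\tfrac2p)-1$ and forces the constraint $\gamma<1$.
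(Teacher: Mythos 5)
Your overall scheme matches the paper's: reparametrize the flow via $t=-\kappa\ln r$, slice, average the two energy identities, absorb the dissipation error using \eqref{e:energy_dissipation_condition}, dichotomize on a stopping time, and close with \L{}ojasiewicz. The key structural steps (slicing bound, dichotomy on $T_{\sfrac12}$, use of \eqref{e:lojaK} in the slow regime) are all there. However, there is a genuine gap in the step you yourself flag as ``requiring the most care.''

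Your plan is to fix $\kappa$ as a constant depending only on $d,\alpha,C_{\text{\sc sl}},C_{\text{\sc ed}}$. This fails to absorb the error term when $p>2$. After H\"older the error term contributes a piece of order $\kappa^{2-\sfrac2p}A^{\sfrac2p}$, where $A:=\int_0^T\bigl(-\psi'(t)\cdot\nabla\mathcal F(\psi(t))\bigr)e^{-(2\alpha+d-2)t/\kappa}\,dt$, while the good term available to absorb it is of order $A$. Absorption requires $\kappa^{2-\sfrac2p}\lesssim A^{1-\sfrac2p}$, i.e.\ $\kappa\lesssim A^{(p-2)/(2p-2)}$. In Case~2, the \L{}ojasiewicz inequality only gives $A\gtrsim\kappa\,E_0^{1+\beta}$, and $D(t)$ can actually be of that small order; so as $E_0\to0$ no \emph{fixed} $\kappa$ satisfies the absorption constraint when $p>2$. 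The paper's essential (and nontrivial) move, which your sketch omits, is to let $\kappa$ \emph{depend on the data}: it sets $\kappa=\eps_\kappa\,A^{(p-2)/(2p-2)}$ (a self-consistent definition, cf.\ \eqref{e:choice_of_kappa}), which makes the error term exactly absorbable and is also the source of the exponent $\gamma=(1+\beta)\bigl(2-\sfrac2p\bigr)-1$: substituting this $\kappa$ into the \L{}ojasiewicz estimate and solving $A\ge CA^{(p-2)/(2p-2)}E_0^{1+\beta}$ for $A$ gives $A\gtrsim E_0^{(1+\beta)(2-2/p)}$.

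Two further remarks. First, your diagnosis that one should ``split the time interval according to whether $\|\psi'(t)\|_{L^2}$ is large or small'' does not resolve the problem: wherever the split is placed, H\"older still produces a term $A^{\sfrac2p}$, and the difficulty is that $\kappa$ must shrink with $E_0$, not that the two regimes of $\|\psi'\|$ need separate treatment. Second, your Case~2 claim ``$\gamma=\beta$'' is not achievable with a fixed $\kappa$ (for $p>2$); the price of making $\kappa$ shrink with $E_0$ is precisely the worse exponent $\gamma=(1+\beta)(2-\sfrac2p)-1>\beta$. The reconciliation argument ``$E_0^{1+\beta}\ge E_0^{1+\gamma}$ for $E_0\le1$'' would be fine if Case~2 genuinely delivered $\gamma=\beta$ — but it doesn't, so that step covers over the missing mechanism rather than proving the theorem. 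The case $p=2$ (the gradient flow of Section~\ref{s:third_proof}) is the only one where a fixed $\kappa$ suffices, which is consistent with the fact that for $p=2$ the stated exponent reduces to $\gamma=\beta$.
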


\begin{oss}[The two extremal cases]
	When $p=2$ and $\beta>0$, Theorem \ref{p:epik} corresponds precisely to \cite[Proposition 3.1]{cospve3}. On the other hand, in Section \ref{s:constructive}, we apply Theorem \ref{p:epik} to a flow for which $p>2$ and $\beta=0$.
\end{oss}

\begin{oss}[About a missing hypothesis]
	In the proposition above there is one hypothesis less with respect to \cite[Proposition 3.1]{cospve3} and Theorem \ref{t:epi:sing}, where it is also required that the trace $c$ is $L^2(\partial B_1)$-close to the set $\mathcal S$ of critical points. This closeness condition is hidden in the hypotheses that the energy dissipation and the \L ojasiewicz inequalities \eqref{e:energy_dissipation_condition} and \eqref{e:lojaK} hold for every $t$ along the flow $\psi$. In fact, in Section \ref{s:constructive}, in order to prove \eqref{e:energy_dissipation_condition} and \eqref{e:lojaK} are satisfied for our specific choice of the flow, we use the closeness condition, which was essential in the proof of the key estimates in Section \ref{s:best}; similarly, in \cite[Proposition 3.1]{cospve3}, we used that the trace $c$ lies close to $\mathcal S$ in the proof of the \L ojasiewicz inequality.
\end{oss}	

\begin{oss}[About $\mathcal G(\mathcal S)$] Let $Q\in \mathcal S$. Then, $\mathcal G(\mathcal S)$ is precisely the energy $\mathcal G(r^\alpha Q(\theta))$ of the $\alpha$-homogeneous extension $r^\alpha Q(\theta)$ of $Q$.
\end{oss}

	\noindent {\bf Proof of Theorem \ref{p:epik}.} First, notice that if $\mathcal G(z)-\mathcal G(\mathcal S)\leq 0$, then choosing $h=z$ we immediately get \eqref{e:logepiK}. Throughout the rest of the proof we will assume that 
	$$0<\mathcal G(z)-\mathcal G(\mathcal S)=\frac{1}{2\alpha+d-2}\big(\mathcal F(c)-\mathcal F(\mathcal S)\big).$$
	We define the competitor $h$ as
	$$h(r,\theta)=r^\alpha u(r,\theta),$$
	where, as in Section \ref{sub:constructive}, 
	$$u(r,\theta)=\varphi(- \kappa\ln r,\theta)\quad\text{for}\quad r\in(0,1]\quad\text{and}\quad \theta\in\partial B_1,$$
	for some $\kappa>0$, and $\varphi$ is the stopped flow
	$$\varphi(t):=\psi(t)\quad\text{if}\quad t\in[0,T],\qquad \varphi(t):=\psi(T)\quad\text{if}\quad t\ge T,$$
	where the stopping time $T$ will be chosen later. 
	\medskip
	
	We will divide the rest of the proof in several steps. Before we proceed, we notice that the log-epiperimetric inequality for $\mathcal G$ \eqref{e:logepiK} is equivalent to: 
	\begin{equation}\label{e:logepiK2}
	\mathcal G(h)-\mathcal G(z)\le -\eps \big(\mathcal G(z)-\mathcal G(\mathcal S)\big)^{1+\gamma},
	\end{equation}
	where the right-hand side of the above inequality can also be written as
	$$\eps \big(\mathcal G(z)-\mathcal G(\mathcal S)\big)^{1+\gamma}=\frac{\eps}{(2\alpha+d-2)^{1+\gamma}}\big(\mathcal F(c)-\mathcal \mathcal F(\mathcal S)\big)^{1+\gamma}.$$
	We start with estimating from above the energy gap $\mathcal G(h)-\mathcal G(z)$ in terms of the flow $\mathcal \psi$.
	\medskip
	
	\noindent {\bf Estimating the energy gap.} We first give the energy $\mathcal G(h)$ in terms of the flow $\psi$ and the variable $t$. Using \eqref{e:slicingG} and reasoning as in Lemma \ref{l:slicing2}, we have 
	\begin{align}
	\mathcal G\big(r^\alpha u(r,\theta)\big)&\le  \int_0^1 \mathcal F\big(u(r,\cdot)\big)\, r^{2\alpha+d-3}\, dr+C_{\text{\tiny\sc sl}}\! \int_{0}^1\int_{\de B_1} |\partial_r u|^2\,r^{2\alpha+d-1}\,d\HH^{d-1}dr\notag\\
	&=  \int_0^{1}\mathcal F\big(\varphi(-\kappa \ln r)\big)\, r^{2\alpha+d-3}\, dr+C_{\text{\tiny\sc sl}}\! \int_{0}^1\kappa^2 r^{2\alpha+d-3}\|\varphi'(-\kappa \ln r)\|_{L^2(\partial B_1)}^2\,dr\notag\\
	&= \frac1\kappa\int_0^{+\infty}\mathcal F\big(\varphi(t)\big)\, e^{-\frac{(2\alpha+d-2)t}{\kappa}}\, dt+\kappa\, C_{\text{\tiny\sc sl}}\!\int_{0}^{+\infty} \|\varphi'(t)\|_{L^2(\partial B_1)}^2 e^{-\frac{(2\alpha+d-2)t}{\kappa}}\,dt.\label{e:G1}
	\end{align}
	In particular, this implies that 
	\begin{align}
	\mathcal G\big(r^\alpha u(r,\theta)\big)-\mathcal G\big( r^\alpha c(\theta)\big)&\le \frac1\kappa\int_0^T \Big(\mathcal F\big(\psi(t)\big) -\mathcal F\big(\psi(0)\big)\Big)\, e^{-\frac{(2\alpha+d-2)t}{\kappa}}\,dt\notag\\
	&\qquad\qquad +\frac1{d+2\alpha-2} \Big(\mathcal F\big(\psi(T)\big) -\mathcal F\big(\psi(0)\big)\Big)\,  e^{-\frac{(2\alpha+d-2)t}{\kappa}}\notag\\
	&\qquad\qquad \qquad\qquad+\kappa\, C_{\text{\tiny\sc sl}}\!\int_{0}^T\|\psi'(t)\|_{L^2(\partial B_1)}^2   \, e^{-\frac{(2\alpha+d-2)t}{\kappa}}\,dt.\label{e:G2}
	\end{align}
	Moreover, integrating by parts the first term on the right-hand side, we get 
	\begin{align}
	\mathcal G\big(r^\alpha u(r,\theta)\big)-\mathcal G\big( r^\alpha c(\theta)\big)&\le \frac1{d+2\alpha-2}\int_0^T \psi'(t)\cdot \nabla\mathcal F\big(\psi(t)\big)\,e^{-\frac{(2\alpha+d-2)t}{\kappa}}\,dt\notag\\
	&\qquad\qquad \qquad\qquad+\kappa\, C_{\text{\tiny\sc sl}}\!\int_{0}^T\|\psi'(t)\|_{L^2(\partial B_1)}^2   \, e^{-\frac{(2\alpha+d-2)t}{\kappa}}\,dt.\label{e:G3}
	\end{align}
	Recall that by \eqref{e:decreasing_energy}, the energy is decreasing along the flow. Thus, the first term in the right-hand side of \eqref{e:G2} is negative (thus we simply estimate it from above by zero). Finally, multiplying \eqref{e:G2} and \eqref{e:G3} by $1/2$ and summing them, we obtain the following estimate
	\begin{align}
	\mathcal G\big(r^\alpha u(r,\theta)\big)-\mathcal G\big( r^\alpha c(\theta)\big)&\le \frac{1}{2(d+2\alpha-2)} e^{-\frac{(2\alpha+d-2)T}{\kappa}}\Big(\mathcal F\big(\psi(T)\big) -\mathcal F\big(\psi(0)\big)\Big)\notag\\
	&\qquad\qquad +\frac1{2(d+2\alpha-2)}\int_0^T \psi'(t)\cdot \nabla\mathcal F\big(\psi(t)\big)\,e^{-\frac{(2\alpha+d-2)t}{\kappa}}\,dt\notag\\
	&\qquad\qquad \qquad\qquad+\kappa\, C_{\text{\tiny\sc sl}}\!\int_{0}^T\|\psi'(t)\|_{L^2(\partial B_1)}^2   \, e^{-\frac{(2\alpha+d-2)t}{\kappa}}\,dt.\label{e:G4}
	\end{align}
	We notice that, up to this point, we used only \eqref{e:slicingG} and \eqref{e:slicingG2}, and an integration by parts. 
	\medskip
	
	\noindent {\bf Estimating the error term.} We now estimate the last term in the right-hand side of \eqref{e:G4}, which is also the only positive one. We notice that the energy dissipation condition \eqref{e:energy_dissipation_condition} is equivalent to the following :
	\begin{equation}\label{e:energy_dissipation_condition2}
	\|\psi'(t)\|_{L^2(\partial B_1)}^2\le C_1\Big[-\psi'(t)\cdot\nabla \mathcal F\big(\psi(t)\big)\Big]+C_2\Big[-\psi'(t)\cdot\nabla \mathcal F\big(\psi(t)\big)\Big]^{\sfrac2p},
	\end{equation}
	for some positive constant $C_1$ and $C_2$. As a consequence, we can estimate
	
	\begin{align*}
	\kappa\,C_{\text{\sc sl}}\!
	\int_{0}^T\|\psi'(t)\|_{L^2(\partial B_1)}^2   \, e^{-\frac{(2\alpha+d-2)t}{\kappa}}\,dt&\le \kappa\,C_{\text{\sc sl}}\,C_1\!\int_{0}^T\Big(-\psi'(t)\cdot\nabla \mathcal F\big(\psi(t)\big)\Big)  \, e^{-\frac{(2\alpha+d-2)t}{\kappa}}\,dt\\  
	&\qquad +\kappa\,C_{\text{\sc sl}}\,C_2\!\int_{0}^T\Big(-\psi'(t)\cdot\nabla \mathcal F\big(\psi(t)\big)\Big)^{\sfrac2p}  \, e^{-\frac{(2\alpha+d-2)t}{\kappa}}\,dt.  
	\end{align*}
	Now, the first term on the right hand side can be absorbed into the first term of \eqref{e:G4} by choosing $\kappa$ small enough, in function of the constants involved. In order to estimate the second term, we use the H\"older inequality :
	\begin{align*}
	\int_{0}^T\Big(-\psi'(t)\cdot&\nabla \mathcal F\big(\psi(t)\big)\Big)^{\sfrac2p}  \, e^{-\frac{(2\alpha+d-2)t}{\kappa}}\,dt\\
	&\le  \left(\int_{0}^T-\psi'(t)\cdot\nabla \mathcal F\big(\psi(t)\big)\, e^{-\frac{(2\alpha+d-2)t}{\kappa}}\,dt\right)^{\sfrac2p} \left(\int_{0}^T e^{-\frac{(2\alpha+d-2)t}{\kappa}}\,dt\right)^{1-\sfrac2p}\\
	&\le  \left(\int_{0}^T-\psi'(t)\cdot\nabla \mathcal F\big(\psi(t)\big)\, e^{-\frac{(2\alpha+d-2)t}{\kappa}}\,dt\right)^{\sfrac2p} \left(\frac{\kappa}{2\alpha+d-2}\left(1- e^{-\frac{(2\alpha+d-2)T}{\kappa}}\right)\right)^{1-\sfrac2p}\\
	&\le   \left(\int_{0}^T-\psi'(t)\cdot\nabla \mathcal F\big(\psi(t)\big)\, e^{-\frac{(2\alpha+d-2)t}{\kappa}}\,dt\right)^{\sfrac2p} \left(\frac{\kappa}{2\alpha+d-2}\right)^{1-\sfrac2p}.
	\end{align*} 
	In conclusion, we obtain 
	\begin{align}
	\!\!\! \kappa\,C_{\text{\sc sl}}\!
	\int_{0}^T\|\psi'(t)\|_{L^2(\partial B_1)}^2   \, e^{-\frac{(2\alpha+d-2)t}{\kappa}}\,dt&\le C\kappa^{2-\sfrac2p} \left(\int_{0}^T-\psi'(t)\cdot\nabla \mathcal F\big(\psi(t)\big)\, e^{-\frac{(2\alpha+d-2)t}{\kappa}}\,dt\right)^{\sfrac2p}\notag\\
	&\qquad\qquad+ C\,\kappa  \int_{0}^T-\psi'(t)\cdot\nabla \mathcal F\big(\psi(t)\big)\, e^{-\frac{(2\alpha+d-2)t}{\kappa}}\,dt\,,\label{e:error_estimate_G}
	\end{align}
	where $C$ is a constant depending on $d,\alpha,p, C_{\text{\sc sl}}$ and $C_{\text{\sc ed}}$. 
	\medskip
	
	\noindent {\bf Stopping time.} Recall that, by hypothesis, the flow $\psi$ is defined on the interval $[0,T_{\text{max}}]$.
	We define $T_{\sfrac12}$ as
	$$T_{\sfrac12}=\sup\Big\{s\in[0,T_{\text{max}}]\ :\ \mathcal F(\psi(t))-\mathcal F(\mathcal S)\ge \frac12\big(\mathcal F(\psi(0))-\mathcal F(\mathcal S)\big)\quad\text{for every}\quad t\in[0,s]\Big\},$$
	and we consider two cases. Below, we will choose the stopping time $T$ such that 
	$$0\le T\le T_{\sfrac12}.$$
	\medskip
	
	\noindent{\bf Choice of $\kappa$.} We choose 
	\begin{equation}\label{e:choice_of_kappa}
	\kappa =\eps_\kappa \left(\int_{0}^T-\psi'(t)\cdot\nabla \mathcal F\big(\psi(t)\big)\, e^{-\frac{(2\alpha+d-2)t}{\kappa}}\,dt\right)^{\frac{p-2}{2p-2}},
	\end{equation}
	where $\eps_\kappa>0$ is a small constant, depending on $d,\alpha,p, C_{\text{\sc sl}}$ and $C_{\text{\sc ed}}$, such that 
	\begin{equation}\label{e:choice_of_eps_kappa}
	\eps_\kappa\le 1\ ,\qquad\eps_\kappa C\le \frac1{10}\,\frac{1}{2(2\alpha+d-2)}\qquad\text{and}\qquad \eps_k\le T_{\text{max}}\,.
	\end{equation}
	We notice that by the choice $T\le T_{\sfrac12}$, we have that:
	\begin{align*}
	\int_{0}^T-\psi'(t)\cdot\nabla \mathcal F\big(\psi(t)\big)\, e^{-\frac{(2\alpha+d-2)t}{\kappa}}\,dt&\le \int_{0}^T-\psi'(t)\cdot\nabla \mathcal F\big(\psi(t)\big)\,dt\\
	&=\mathcal F\big(\psi(0)\big)-\mathcal F\big(\psi(T)\big)\le \mathcal F\big(\psi(0)\big)-\mathcal F(\mathcal S)\le E, 
	\end{align*} 
	which gives that 
	$$\kappa\le \eps_\kappa E^{\frac{p-2}{2p-2}}\le \eps_\kappa,$$
	where the last inequality holds when $E\le 1$.
	
	Now, notice that the last term of the right-hand side of \eqref{e:G4} can be estimated as follows :
	\begin{align}
	\kappa\,C_{\text{\sc sl}}\!
	\int_{0}^T\|\psi'(t)\|_{L^2(\partial B_1)}^2   \, e^{-\frac{(2\alpha+d-2)t}{\kappa}}\,dt&\le 2C\eps_k\int_{0}^T-\psi'(t)\cdot\nabla \mathcal F\big(\psi(t)\big)\, e^{-\frac{(2\alpha+d-2)t}{\kappa}}\,dt\notag\\
	&\le \frac1{4(2\alpha+d-2)}\int_{0}^T-\psi'(t)\cdot\nabla \mathcal F\big(\psi(t)\big)\, e^{-\frac{(2\alpha+d-2)t}{\kappa}}\,dt,\label{e:G41/2} 
	\end{align}
	where the first inequality follows by the first inequality for $\eps_k$ in \eqref{e:choice_of_eps_kappa} and the second one is a consequence of the second bound for $\eps_k$ in \eqref{e:choice_of_eps_kappa}. This is the estimate in which we use the first two inequalities in the choice of the constant $\eps_\kappa$. The last inequality of \eqref{e:choice_of_eps_kappa} is only needed for the bound
	$$\kappa\le T_{\text{max}},$$
	which we will use in the two possible choices of $T$ that we discuss below. Before we proceed with the choice of $T$, we notice that  by combining the inequalities \eqref{e:G41/2} and \eqref{e:G4}, we can eliminate the last term in the right-hand side of \eqref{e:G4}. Precisely, the energy gap $\mathcal G(r^\alpha u)-\mathcal G(z)$ can be estimated as follows: 
	
	\begin{align}
	\mathcal G\big(r^\alpha u(r,\theta)\big)-\mathcal G\big( r^\alpha c(\theta)\big)&\le \frac{1}{2(d+2\alpha-2)} e^{-\frac{(2\alpha+d-2)T}{\kappa}}\Big(\mathcal F\big(\psi(T)\big) -\mathcal F\big(\psi(0)\big)\Big)\notag\\
	&\qquad\qquad +\frac1{4(d+2\alpha-2)}\int_0^T \psi'(t)\cdot \nabla\mathcal F\big(\psi(t)\big)\,e^{-\frac{(2\alpha+d-2)t}{\kappa}}\,dt.\label{e:G5}
	\end{align}

	\noindent {\bf Choice of the stopping time.} We now proceed with the choice of $T$, which is the last point of the construction of the competitor. As in Subsection \ref{sub:constructive}, we consider two cases.
	
	\medskip
	
	\noindent {\bf Case 1. \it The energy decreases rapidly along the flow: $T_{\sfrac12}\le \kappa$.} 
	\smallskip
	
	\noindent In this case, we choose $T=T_{\sfrac12}$ and we estimate the first term in the right-hand side of \eqref{e:G5}.
	Indeed, since $\frac{T}{\kappa}\le 1$ and since the function $x\mapsto -e^{-x}$ is increasing in $x$, we have :
	\begin{align*}
	\frac{-e^{-\frac{T(2\alpha+d-2)}\kappa}}{2(2\alpha+d-2)} \big(\mathcal F(\psi(0)) -\mathcal F(\psi(T))\big)&\le \frac{-e^{-(2\alpha+d-2)}}{2(2\alpha+d-2)} \big(\mathcal F(\psi(0)) -\mathcal F(\psi(T))\big)\\
	&=-\frac12 \frac{e^{-(2\alpha+d-2)}}{2(2\alpha+d-2)} \big(\mathcal F(\psi(0))-\mathcal F(\mathcal S)\big),
	\end{align*}
	which concludes the proof of \eqref{e:logepiK} in this case.
\medskip
	
	\noindent {\bf Case 2. \it The energy decreases slowly along the flow: $\kappa\le T_{\sfrac12}$.}
	
	\noindent  In this case, we choose $T=\kappa$ and we estimate the second term in the right-hand side of \eqref{e:G5}.
	By the \L ojasiewicz inequality \eqref{e:lojaK}, we have 
	\begin{align*}
	-\int_0^T -\psi'(t)\cdot\nabla\mathcal F(\psi(t))\,e^{-\frac{t(2\alpha+d-2)}\kappa}\,dt
	&\le -C_{\text{\sc ls}}\int_0^T \big(\mathcal F(\psi(t))-\mathcal F(Q)\big)^{1+\beta}\,e^{-\frac{t(2\alpha+d-2)}\kappa}\,dt\\
	&\le -\frac{C_{\text{\sc ls}}}{2^{1+\gamma}}\int_0^T \big(\mathcal F(\psi(0))-\mathcal F(Q)\big)^{1+\beta}\,e^{-\frac{t(2\alpha+d-2)}\kappa}\,dt\\
	&= -\frac{C_{\text{\sc ls}}\kappa}{(2\alpha+d-2)2^{1+\beta}}\left(1-e^{-\frac{T(2\alpha+d-2)}\kappa}\right) \big(\mathcal F(\psi(0))-\mathcal F(Q)\big)^{1+\gamma}\\
	&= -\frac{C_{\text{\sc ls}}\left(1-e^{-(2\alpha+d-2)}\right) }{(2\alpha+d-2)2^{1+\beta}} \, \kappa\, \big(\mathcal F(\psi(0))-\mathcal F(Q)\big)^{1+\beta},
	\end{align*}
	where the second inequality follows from the fact that 
	$$\mathcal F(\varphi(t))-\mathcal F(Q)\ge \frac12\big(\mathcal F(\varphi(0))-\mathcal F(Q)\big)\quad\text{for every}\quad t\le T=\kappa\le T_{\sfrac12}.$$
	Now, setting 
	$$C=\frac{\eps_k C_{\text{\sc ls}}\left(1-e^{-(2\alpha+d-2)}\right) }{(2\alpha+d-2)2^{1+\beta}}$$
	and using the definition of $\kappa$, we get that 
	\begin{align*}
	-\int_0^T -\psi'(t)&\cdot\nabla\mathcal F(\psi(t))\,e^{-\frac{t(2\alpha+d-2)}\kappa}\,dt\\
	&\le -C\left(\int_{0}^T-\psi'(t)\cdot\nabla \mathcal F\big(\psi(t)\big)\, e^{-\frac{(2\alpha+d-2)t}{\kappa}}\,dt\right)^{\frac{p-2}{2p-2}}\big(\mathcal F(\psi(0))-\mathcal F(Q)\big)^{1+\beta},
	\end{align*} 
	which implies 
	\begin{align*}
	-\left(\int_0^T -\psi'(t)\cdot\nabla\mathcal F(\psi(t))\,e^{-\frac{t(2\alpha+d-2)}\kappa}\,dt\right)^{\frac{p}{2p-2}}\le -C\big(\mathcal F(\psi(0))-\mathcal F(Q)\big)^{1+\beta},
	\end{align*} 
	and finally, 
	\begin{align*}
	-\int_0^T -\psi'(t)\cdot\nabla\mathcal F(\psi(t))\,e^{-\frac{t(2\alpha+d-2)}\kappa}\,dt\le -C^{2-\sfrac2p}\big(\mathcal F(\psi(0))-\mathcal F(Q)\big)^{(1+\beta)\big(2-\frac2p\big)},
	\end{align*}
	which concludes the proof of Theorem \ref{p:epik}, since $\ds1+\gamma=(1+\beta)\big(2-\sfrac2p\big)$. \qed

\section{Rate of convergence of the blow-up sequences}\label{app:B}
In this section, we show how to deduce the rate of convergence of the blow-up sequence starting from  the log-epiperimetric inequality. The argument holds for a general energy $\mathcal E$ and can be used in several different contests: for the obstacle and the thin-obstacle problems, as well as for Bernoulli-type free boundary problems and minimal surfaces (see, for instance \cite{esv1} and \cite{esv2}). 
\begin{prop}\label{p:app:B}
Let $\alpha>0$ be fixed. 
Let the function $u\in H^1(B_1)$ and the energy $\mathcal E:H^1(B_1)\to\R$ be given and,  for every $0<r\le 1$, let $u_r\in H^1(B_1)$ be defined as 
$$u_r(x):=\frac{1}{r^\alpha}u(rx)\quad\text{for every}\quad x\in B_1.$$ 

\begin{enumerate}[\quad \rm (a)]

\item The function $r\mapsto \mathcal E(u_r)$ is differentiable on $(0,1]$ and 
\begin{equation}\label{e:C_a}
\frac{\partial }{\partial r}\mathcal E(u_r)\ge \frac{C_a}{r} \mathcal D(u_r)\qquad\text{for every}\qquad 0<r<1,
\end{equation}
where $C_a>0$ is a given constant and 
$$\mathcal D(u):=\int_{\partial B_1}|x\cdot\nabla u-
\alpha u|^2\,d\HH^{d-1}(x).$$
\item There is a constant $C_b>0$ such that
$$\frac{\partial }{\partial r}\mathcal E(u_r)\ge \frac{C_b}{r}\big(\mathcal E(z_r)-\mathcal E(u_r)\big)\qquad\text{for every}\qquad 0<r<1,$$ 
where $z_r:B_1\to \R$ is the $\alpha$-homogeneous extension of $u_r|_{\partial B_1}$, that is, 
$$z_r(x)=|x|^\alpha u_r\big(\sfrac{x}{|x|}\big)\quad\text{for every}\quad x\in B_1.$$   
\item There are constants $C_c>0$ and $\gamma\in[0,1)$ such that, for every $r\in]0,1]$, there exists a function $h_r\in H^1(B_1)$ for which the following log-epiperimetric inequality holds :
$$\mathcal E(h_r)\le \big(1-C_c|\mathcal E(z_r)|^\gamma\big)\mathcal E(z_r). $$ 
\item For every $0<r\le 1$, we have 
$$0\le \mathcal E(u_r)\le \mathcal E(z_r)\qquad\text{and}\qquad 0\le \mathcal E(u_r)\le \mathcal E(h_r).$$
\end{enumerate}
 
\noindent Then, for every $u\in H^1(B_1)$ satisfying hypotheses {\rm(a)}, {\rm(b)}, {\rm(c)} and {\rm(d)}, and such that $\mathcal E(u)\le E$, for some constant $E$, there exists $u_0\in H^1(B_1)$ such that 
$$\|u_r-u_0\|_{L^2(\partial B_1)}\le C(-\ln r)^{-\frac{1-\gamma}{2\gamma}}\qquad\text{for every}\qquad 0<r\le 1,$$
where the constant $C$ depends on $C_a$, $C_b$, $C_c$, the dimension $d$, the exponent $\gamma$, and on $E$.  
\end{prop}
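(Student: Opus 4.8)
The plan is to convert the four hypotheses into a single differential inequality for the energy $r\mapsto\mathcal E(u_r)$ together with a pointwise bound for the $L^2(\partial B_1)$-speed $r\mapsto\|\partial_r u_r\|_{L^2(\partial B_1)}$, and then to integrate. First I would record the elementary identity $\partial_r u_r=\tfrac1r\big(x\cdot\nabla u_r-\alpha u_r\big)$, obtained by differentiating $u_r(x)=r^{-\alpha}u(rx)$; restricting to $\partial B_1$ it gives $\|\partial_r u_r\|_{L^2(\partial B_1)}^2=\tfrac1{r^2}\mathcal D(u_r)$, so that hypothesis (a) becomes
\[
\|\partial_r u_r\|_{L^2(\partial B_1)}\ \le\ \frac{1}{\sqrt{C_a}}\,\frac{\big(\partial_r\mathcal E(u_r)\big)^{1/2}}{\sqrt r}\,.
\]

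Next I would combine (b), (c) and (d). By (d), $\mathcal E(z_r)\ge\mathcal E(u_r)\ge0$ and $\mathcal E(u_r)\le\mathcal E(h_r)$, and then (c) yields $\mathcal E(z_r)-\mathcal E(u_r)\ \ge\ \mathcal E(z_r)-\mathcal E(h_r)\ \ge\ C_c\,\mathcal E(z_r)^{1+\gamma}\ \ge\ C_c\,\mathcal E(u_r)^{1+\gamma}$. Substituting into (b),
\[
\partial_r\mathcal E(u_r)\ \ge\ \frac{C_b}{r}\big(\mathcal E(z_r)-\mathcal E(u_r)\big)\ \ge\ \frac{C_bC_c}{r}\,\mathcal E(u_r)^{1+\gamma}\ \ge\ 0\,,
\]
so $r\mapsto\mathcal E(u_r)$ is nondecreasing, bounded by $\mathcal E(u)\le E$, hence converges to some $\ell\ge0$ as $r\to0^+$; the displayed inequality forces $\ell=0$. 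Writing $t=-\ln r$ and $g(t)=\mathcal E(u_{e^{-t}})$ turns this into $g'\le -C_bC_c\,g^{1+\gamma}$, and integrating $\tfrac{d}{dt}(g^{-\gamma})\ge\gamma C_bC_c$ from $0$ to $t$ gives the logarithmic decay
\[
\mathcal E(u_r)\ \le\ \big(\gamma\,C_bC_c\,(-\ln r)\big)^{-1/\gamma}\qquad\text{for }0<r\le1\,.
\]
(If $\mathcal E(u_{r_0})=0$ for some $r_0$, monotonicity gives $\mathcal E(u_r)\equiv0$ and, by (a), $u_r$ is constant on $(0,r_0]$, so the claim is trivial; hence I may assume $\mathcal E(u_r)>0$ on $(0,1]$, which in particular requires $\gamma>0$, as holds in all the applications.)

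The key point is then a rearrangement of the speed bound. Using $r\,\partial_r\mathcal E(u_r)\ge C_b\big(\mathcal E(z_r)-\mathcal E(u_r)\big)\ge C_bC_c\,\mathcal E(u_r)^{1+\gamma}$ once more, I would estimate
\[
\frac{\big(\partial_r\mathcal E(u_r)\big)^{1/2}}{\sqrt r}=\frac{\partial_r\mathcal E(u_r)}{\big(r\,\partial_r\mathcal E(u_r)\big)^{1/2}}\ \le\ \frac{1}{\sqrt{C_bC_c}}\,\frac{\partial_r\mathcal E(u_r)}{\mathcal E(u_r)^{(1+\gamma)/2}}\ =\ \frac{2}{(1-\gamma)\sqrt{C_bC_c}}\,\frac{d}{dr}\Big(\mathcal E(u_r)^{(1-\gamma)/2}\Big)\,,
\]
the last identity using $\tfrac{1+\gamma}{2}<1$. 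Combining with the first display and the $L^2(\partial B_1)$-valued fundamental theorem of calculus, for $0<\rho<r\le1$,
\[
\|u_r-u_\rho\|_{L^2(\partial B_1)}\ \le\ \int_\rho^r\|\partial_s u_s\|_{L^2(\partial B_1)}\,ds\ \le\ C\int_\rho^r\frac{d}{ds}\Big(\mathcal E(u_s)^{(1-\gamma)/2}\Big)\,ds\ \le\ C\,\mathcal E(u_r)^{(1-\gamma)/2}\,,
\]
with $C$ depending only on $C_a,C_b,C_c,\gamma$. Since $\mathcal E(u_r)\to0$, this shows $\{u_\rho\}_{\rho>0}$ is Cauchy in $L^2(\partial B_1)$; calling its limit $u_0$ (its membership in $H^1(B_1)$ comes from the compactness available in the ambient problem) and letting $\rho\to0$, we get $\|u_r-u_0\|_{L^2(\partial B_1)}\le C\,\mathcal E(u_r)^{(1-\gamma)/2}$, and inserting the decay of $\mathcal E(u_r)$ gives $\|u_r-u_0\|_{L^2(\partial B_1)}\le C\,(-\ln r)^{-\frac{1-\gamma}{2\gamma}}$, as claimed.

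The only genuine difficulty is the rearrangement identity in the last step — expressing $\big(\partial_r\mathcal E(u_r)\big)^{1/2}r^{-1/2}$ as a constant multiple of $\frac{d}{dr}\mathcal E(u_r)^{(1-\gamma)/2}$, which is what makes the speed integrable in a way that converts the logarithmic energy decay into the asserted rate — together with the bookkeeping on the set $\{\mathcal E(u_r)=0\}$ and the justification of the vector-valued fundamental theorem of calculus for $s\mapsto u_s|_{\partial B_1}$ (this is where $u\in H^1(B_1)$, or the extra regularity present in the applications, is used). Everything else is routine ODE analysis of the differential inequalities furnished by (a)--(d).
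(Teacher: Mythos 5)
Your proof is correct, and the conclusion from the core differential inequality $\partial_r\mathcal E(u_r)\ge \frac{C_bC_c}{r}\mathcal E(u_r)^{1+\gamma}$ to the stated rate takes a genuinely different and somewhat slicker route than the paper's. The paper converts to the variable $t=-\ln r$, gets $e(t)\le Ct^{-1/\gamma}$ exactly as you do, but then bounds $\|u_R-u_r\|_{L^2(\partial B_1)}$ by a Cauchy--Schwarz step on the time interval, namely
$$\int_{\partial B_1}|u_R-u_r|^2\,d\HH^{d-1}\ \le\ (T-t)\int_t^T f(\tau)\,d\tau\ \le\ \frac{T-t}{C_a}\,e(t)\,,$$
and deploys this on the dyadic scales $t_n=2^n$, $r_n=e^{-2^n}$ so that the factor $(T-t)=2^n$ is offset by the decay $e(t_n)\lesssim 2^{-n/\gamma}$, producing a geometric series with ratio $\sigma=2^{-\frac{1-\gamma}{2\gamma}}$. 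You instead observe that hypothesis (b)--(c)--(d) can be reused a second time to bound the speed pointwise in $r$: writing $\frac{(\partial_r\mathcal E(u_r))^{1/2}}{\sqrt r}=\frac{\partial_r\mathcal E(u_r)}{(r\,\partial_r\mathcal E(u_r))^{1/2}}$ and lower-bounding the denominator by $\sqrt{C_bC_c}\,\mathcal E(u_r)^{(1+\gamma)/2}$ turns the speed bound into an exact derivative $\tfrac{2}{1-\gamma}\tfrac{d}{dr}\mathcal E(u_r)^{\frac{1-\gamma}{2}}$, so the fundamental theorem of calculus in $L^2(\partial B_1)$ gives $\|u_r-u_\rho\|_{L^2(\partial B_1)}\le C\,\mathcal E(u_r)^{\frac{1-\gamma}{2}}$ directly, with no dyadic bookkeeping. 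Both arguments yield the same exponent $\frac{1-\gamma}{2\gamma}$; yours has the advantage of producing the stronger intermediate statement that the oscillation of $u_r|_{\partial B_1}$ is controlled by $\mathcal E(u_r)^{(1-\gamma)/2}$ (whatever the decay of $\mathcal E(u_r)$ turns out to be), and of making transparent that the constant degenerates as $\gamma\to1^-$ through the explicit factor $\frac{2}{1-\gamma}$. The caveats you flag --- the vector-valued fundamental theorem of calculus for $s\mapsto u_s|_{\partial B_1}$ and the degenerate case where $\mathcal E(u_{r})$ vanishes --- are real but equally present, and equally glossed over, in the paper's proof, so they do not constitute a gap relative to the target argument.
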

\begin{proof}
First, notice that by {\rm(b)}, {\rm(c)} and {\rm(d)}, we have 
\begin{align}
\frac{\partial}{\partial r}\mathcal E(u_r)&\ge\frac{C_b}{r}\big(\mathcal E(z_r)-\mathcal E(u_r)\big)\notag\\
&\ge\frac{C_b}{r}\big(\mathcal E(h_r)+C_{c}\,\mathcal E(z_r)^{1+\gamma}-\mathcal E(u_r)\big)\ge \frac{C_bC_c}{r}\mathcal E(u_r)^{1+\gamma}.\label{e:estimate_nome_a_caso}
\end{align}
Consider the change of coordinates $t(r)=-\log r$ (thus, $r(t)=e^{-t}$ and $r'(t)=-r(t)$), and let
$$e(t):=\mathcal E(u_{r(t)})\qquad\text{and} \qquad f(t):=\mathcal D(u_{r(t)}),$$
for every $t\ge 0$. Then, we have  
$$e'(t)=r'(t)\frac{\partial}{\partial r}\mathcal E(u_{r(t)})=-r(t)\frac{\partial}{\partial r}\mathcal E(u_{r(t)}).$$
In particular, using \eqref{e:C_a} and \eqref{e:estimate_nome_a_caso}
$$e'(t)\le -C_a\, f(t)\qquad\text{and}\qquad e'(t)\le -C_b\,C_c\,e(t)^{1+\gamma}.$$
The second inequality implies the decay of $e(t)$. Indeed, 
$$\frac{\partial }{\partial t}\left[e(t)^{-\gamma}-\gamma {t} C_bC_c\right]=\gamma \big(-e(t)^{-1-\gamma}e'(t)-C_bC_c\big)\ge0,$$
which implies that, for every $t\ge 0$, 
$$e(t)^{-\gamma}-\gamma {t} C_bC_c\ge e(0)^{-\gamma},$$
which after rearranging the terms gives
$$e(t)\le \left(e(0)^{-\gamma}+t\gamma C_bC_c\right)^{-\sfrac1\gamma}\quad\text{for every}\quad t\ge 0.$$
In particular, there is a constant $C$, depending on $C_b$, $C_c$, $e(0)$ and $\gamma$, such that 
\begin{equation}\label{e:energy_estimate_t}
e(t)\le C\,t^{-\sfrac1\gamma}\quad\text{for every}\quad t\ge 1.
\end{equation}
Let now $0<r<R\le 1$, $t=-\ln R$ and $T=-\ln r$ be fixed; in particular, $0\le t<T<+\infty$.  
\\
For every $x\in\partial B_1$ we compute  
$$\frac{\partial}{\partial t}u_t(x)=\frac{\partial}{\partial t}\left[\frac{u(tx)}{t^\alpha}\right]=\frac{x\cdot\nabla u(tx)}{t^\alpha}-\frac{\alpha}{t}\frac{u(tx)}{t^\alpha}=\frac1t\big(x\cdot \nabla u_t(x)-\alpha u_t(x)\big).$$
Integrating over $\partial B_1$, we get 
\begin{align*}
\int_{\partial B_1}\left| u_{R}-u_{r}\right|^2 \,d\HH^{d-1}&\leq \int_{\partial B_1}\left(\int_{r}^{R}\frac{1}{\rho}\left|  x\cdot \nabla u_\rho-u_\rho\right| \,d\rho\right)^2\,d\HH^{d-1}\\
&= \int_{\de B_1}\left(\int_{t}^{T}\left|  x\cdot \nabla u_{\rho(\tau)}-u_{\rho(\tau)}\right| \,d\tau\right)^2\,d\HH^{d-1},
\end{align*}
where we used the change of variables $\tau= -\ln \rho$. By the Cauchy-Schwartz inequality, we get 
\begin{align*}
\int_{\partial B_1}\left| u_{R}-u_{r}\right|^2 \,d\HH^{d-1}
&\le  \int_{\partial B_1}\left((T-t)\int_{t}^{T}\left|  x\cdot \nabla u_{\rho(\tau)}-u_{\rho(\tau)}\right|^2 \,d\tau\right)\,d\HH^{d-1}\\
&= (T-t)\int_{t}^{T}\int_{\partial B_1}\left|  x\cdot \nabla u_{\rho(\tau)}-u_{\rho(\tau)}\right|^2d\HH^{d-1}\,d\tau=: (T-t)\int_{t}^{T}f(\tau)\,d\tau\,.
\end{align*}
Now, using the inequality $\ds f(\tau)\le -\frac1{C_a}e'(\tau)$, and integrating in $\tau$, we obtain
$$\int_{\partial B_1}\left| u_{R}-u_{r}\right|^2 \,d\HH^{d-1}\le \frac{T-t}{C_a}\big(e(t)-e(T)\big)\le \frac{T-t}{C_a} e(t).$$
Applying the above inequality to  
$$T=t_{n+1}=2^{n+1}\ ,\quad t=t_n=2^n\ ,\quad r=r_{n+1}=e^{-2^{n+1}}\ ,\quad R=r_n=e^{-2^{n}},$$
and using \eqref{e:energy_estimate_t}, we get  
\begin{align*}
\int_{\partial B_1}\left|u_{r_{n+1}}-u_{r_n}\right|^2 \,d\HH^{d-1}&\le \frac1{C_a}(T-t)e(t)\le \frac{C}{C_a}  \left(2^{\frac{1-\gamma}{\gamma}}\right)^{-n}.
\end{align*}
Let now $\sigma=2^{-\frac{1-\gamma}{2\gamma}}$. Thus, $\sigma<1$ and 
\begin{align*}
\left\|u_{r_{n+1}}-u_{r_n}\right\|_{L^2(\partial B_1)} &\le \left(\sfrac{C}{C_a}\right)^{\sfrac12} \sigma^{n},
\end{align*}
which implies that, for every $N\in\N$ and for every $m>n\ge N$, we have 
\begin{align*}
\left\|u_{r_{m}}-u_{r_n}\right\|_{L^2(\partial B_1)} &\le\frac{\left(\sfrac{C}{C_a}\right)^{\sfrac12}}{1-\sigma}  \sigma^{N},
\end{align*}
which proves that $u_{r_n}$ is a Cauchy sequence in $L^2(\partial B_1)$ and so, it converges to some $u_0\in L^2(\partial B_1)$, for which we have 
$$\left\|u_{r_{n}}-u_{0}\right\|_{L^2(\partial B_1)} \le\frac{\left(\sfrac{C}{C_a}\right)^{\sfrac12}}{1-\sigma}  \sigma^{n}.$$
In order to conclude the proof, it only remains to notice that if $r\in(r_{n+1},r_n)$, then 
$$\int_{\partial B_1}\left| u_{r_n}-u_{r}\right|^2 \,d\HH^{d-1}\le \frac{t_{n+1}-t_n}{C_a} e(t_n)\le \frac{C}{C_a}2^n2^{-\sfrac{n}{\gamma}}=\frac{C}{C_a}\sigma^{2n},$$
which, by the triangular inequality and the fact that $t_n<-\ln r< t_{n+1}$, implies that 
\begin{align*}
\left\|u_{r}-u_{0}\right\|_{L^2(\partial B_1)}&\le \left\|u_{r}-u_{r_n}\right\|_{L^2(\partial B_1)}+\left\|u_{r_{n}}-u_{0}\right\|_{L^2(\partial B_1)}\\
&\le\left(\sfrac{C}{C_a}\right)^{\sfrac12}\Big(1+\frac1{1-\sigma}\Big)\sigma^{n}=\left(\sfrac{C}{C_a}\right)^{\sfrac12}\frac{2-\sigma}{1-\sigma} \left(2^{-n}\right)^{\frac{1-\gamma}{2\gamma}}\\
&=\left[\left(\sfrac{C}{C_a}\right)^{\sfrac12}\frac{2-\sigma}{1-\sigma} 2^{\frac{1-\gamma}{2\gamma}} \right] t_{n+1}^{-\frac{1-\gamma}{2\gamma}}\le\left[\left(\sfrac{C}{C_a}\right)^{\sfrac12}\frac{2-\sigma}{1-\sigma} 2^{\frac{1-\gamma}{2\gamma}}\right] (-\ln r)^{\frac{1-\gamma}{2\gamma}},
\end{align*}
which proves that $u_r$ converges to $u_0$ in $L^2(\partial B_1)$.
\end{proof}

\end{document}